\newtheorem*{rep@theorem}{\rep@title}
\newcommand{\newreptheorem}[2]{%
\newenvironment{rep#1}[1]{%
 \def\rep@title{#2 \ref{##1}}%
 \begin{rep@theorem}}%
 {\end{rep@theorem}}}
\newtheorem{thm}{Theorem}[section]
\newtheorem*{thm*}{Theorem}
\newtheorem{cor}[thm]{Corollary}
\newtheorem{prop}[thm]{Proposition}
\newtheorem{lem}[thm]{Lemma}
\theoremstyle{definition}
\newtheorem*{defn*}{Definition}
\theoremstyle{remark}
\newtheorem*{rem*}{Remark}
\let\c@equation\c@thm
\numberwithin{equation}{section}
\DeclareMathOperator{\Psl}{PSL}
\DeclareMathOperator{\Pgl}{PGL}
\DeclareMathOperator{\Sl}{SL}
\DeclareMathOperator{\Mod}{Mod}
\DeclareMathOperator{\tr}{tr}
\DeclareMathOperator{\Tr}{Tr}
\DeclareMathOperator{\Dfp}{DF}
\DeclareMathOperator{\Hom}{Hom}
\newcommand{\Sf}{\mathcal{S}}
\title{Number of Systoles of Once-Punctured Torus and Four-Punctured Sphere}
\author{Naoki Hanada}
\address{Department of Mathematical and Computing Sciences, Tokyo Institute of Technology, 2-12-1 O-okayama, Meguro-ku, Tokyo 152-8552, Japan}
\email{naoki.hanada@gmail.com}
\subjclass[2010]{Primary 30F10, Secondly 32G15, 53C22}
\keywords{hyperbolic surfaces, systoles, Teichm\"{u}ller space, mapping class group}
\date{\today}
\begin{document}

\begin{abstract}
We compute the number of systoles, the shortest simple closed geodesics and 2-systoles, the second shortest simple closed geodesics on hyperbolic surfaces homeomorphic to once-punctured torus and four-punctured sphere. 
\end{abstract}

\maketitle

\tableofcontents

\section{Introduction}
The shortest simple closed geodesics on surfaces are called \textit{systole}s. Among them, systoles of hyperbolic surfaces are studied firstly by Schmutz Schaller in the context of the geometry of numbers. The study of systoles of hyperbolic surfaces mainly treats problems of searching surfaces with maximal number or length of systoles for each signature $(g,n)$ ($g$ is genus and $n$ is number of cusps), or asymptotic behavior of number or length of systoles for $(g,n)$ (see \cite{schmutz1,schmutz2,schmutz3,schmutz4,fanoniparlier,busersarnak,balacheffmokoberparlier,parlier2}). 

On the other hand, there are only a few results on exact number of systoles. As for the maximum number of systoles, known exact numbers are only 3, 3, 5 and 12, for signatures $(1,1)$, $(0,4)$, $(1,2)$ and $(2,0)$, which were given by Schmutz Schaller\cite{schmutz1}.

Let $\Sf_{g,n}$ denote the surface with signature $(g,n)$. In this paper, we compute the number of systoles for surfaces in $\mathcal{T}(\Sf_{1,1})$ and $\mathcal{T}(\Sf_{0,4})$, Teichm\"{u}ller spaces of $\Sf_{1,1}$ and $\Sf_{0,4}$. Results for two surfaces are similar. Let $\Sf$ be $\Sf_{1,1}$ or $\Sf_{0,4}$. Firstly we introduce a global coordinate of $\mathcal{T}(\Sf)$ and compute the action of the mapping class group in this coordinate. Secondly we construct a graph $\Gamma$ on $\mathcal{T}(\Sf)$, which is isomorphic to the dual of the Farey tessellation of the hyperbolic plane, namely a properly embedded 3-regular tree invariant under the action of the mapping class group (see Figure \ref{tritreeS11}, \ref{tritreeS04}). 

We obtain the following result:

\begin{reptheorem}{thethm}
For $m \in \mathcal{T}(\Sf)$;
\begin{enumerate}
\item if $m$ is a vertex of $\Gamma$, then $m$ has precisely three different systoles,
\item if $m$ is on an edge of $\Gamma$ but not a vertex, then $m$ has precisely two different systoles,
\item otherwise, $m$ has only one systole.
\end{enumerate}

\end{reptheorem}

In addition, we construct closed subsets $\Delta_0$ and $\Delta_1(\subset \Delta_0)$ of $\mathcal{T}(\mathcal{S})$ (see Figure \ref{deltaS11}, \ref{deltaS04}) and obtain the result for \textit{2-systole}s, the second shortest simple closed geodesics:
\begin{reptheorem}{thethm2}
For $m \in \mathcal{T}(\Sf)$;
\begin{enumerate}
\item if $m\in \Delta_1$, then $m$ has precisely three different 2-systoles,
\item if $m\in \Delta_0 \setminus \Delta_1$, then $m$ has precisely two different 2-systoles,
\item otherwise, $m$ has only one 2-systole.
\end{enumerate}
\end{reptheorem}



\section{Action of Mapping Class Group on Teichm\"{u}ller Space}
\label{sec2}
 \subsection{Global Coordinate of Teichm\"{u}ller Space}
 \label{teichmullerspace}
  \subsubsection{Teichm\"{u}ller Space of Hyperbolic Surface}
  Let $\Sf$ denote $\Sf_{g,n}$ with negative Euler characteristic.
  
   We define $\mathcal{T}(\Sf)$, the Teichm\"{u}ller space of $\Sf$ as follows:
   \[
   \mathcal{T}(\Sf) \coloneqq \{(\Sigma, f)\}/{\sim}
   \]
   where $\Sigma$ is a surface with a complete hyperbolic structure homeomorphic to $\mathcal{S}$, $f$ is a homeomorphism $\Sf \rightarrow \Sigma$, and the equivalence relation $\sim$ is defined as $(\Sigma,f) \sim (\Sigma' ,f')$ if there is an isometry $\iota \colon \Sigma \rightarrow \Sigma'$ so that $\iota \circ f$ and $f'$ are homotopic.
   
   
   There is an alternate definition of $\mathcal{T}(\Sf)$:
   \[
   \Dfp (\pi _1 (\Sf), \Psl (2,\mathbb{R}))/{\Pgl (2, \mathbb{R})}.
   \]
   $\Dfp (\pi _1 (\Sf), \Psl (2,\mathbb{R}))$ is the set of representations $\pi _1(\Sf) \rightarrow \Psl (2, \mathbb{R})$ that are faithful, discrete, and send elements that are homotopic to punctures to parabolic elements. $\Pgl(2,\mathbb{R})$ acts on it  by conjugation. Restrictions we put on representations correspond to the condition for $\mathbb{H}/{\rho (\pi _1 (\Sf))}$ ($\rho \in \Dfp (\pi _1 (\Sf), \Psl (2,\mathbb{R}))$) to have complete hyperbolic structures on $\Sf$ (see \cite{bonahon, farbmarglit}). 
   
   Let $[(\Sigma , f)] \in \mathcal{T}(\Sf)$. Then we can identify $\pi_1(\Sigma)$ with a deck transformation group of $\tilde{\Sigma}$, the universal cover of $\Sigma$. On the other hand, $f$ identifies $\pi_1(\Sf)$ with $\pi_1(\Sigma)$. Therefore we can define a representation $f_\ast \colon \pi_1(\Sf) \rightarrow \Psl(2,\mathbb{R})$, which identifies $\pi_1(\Sf)$ with the deck transformation group of $\tilde{\Sigma}$. Then $f_\ast $ is an element of $\Dfp (\pi _1 (\Sf), \Psl (2,\mathbb{R}))$ and the map $\mathcal{T}(\Sf) \rightarrow \Dfp (\pi _1 (\Sf), \Psl (2,\mathbb{R}))/{\Pgl (2, \mathbb{R})}$ defined by $[(\Sigma, f)] \mapsto [f_{\ast}]$ is a bijection (see \cite{farbmarglit}).
   
   We identify $\mathcal{T}(\Sf)$ with $\Dfp (\pi _1 (\Sf), \Psl (2,\mathbb{R}))/{\Pgl(2,\mathbb{R})}$ by this bijection. Namely for $[(\Sigma,f)] \in \mathcal{T}(\Sf)$ and corresponding $[\rho] \in \Dfp (\pi _1 (\Sf), \Psl (2,\mathbb{R}))/{\Pgl(2,\mathbb{R})}$, we write $[(\Sigma, f)]=[\rho]$.
   
   Let $\gamma _1, \gamma_2, \cdots ,\gamma_n$ be generators of $\pi _1 (\Sf)$ and
   \[
   R(\Sf) \coloneqq \{(\rho (\gamma_1), \rho (\gamma _2), \cdots ,\rho (\gamma_n)) \mid \rho \in \Dfp (\pi _1 (\Sf), \Psl (2,\mathbb{R})) \} \subset (\Psl (2,\mathbb{R}))^n.
   \]
   There is a natural bijective correspondence between the points of $R(\Sf)$ and the points of $\Dfp (\pi _1 (\Sf), \Psl (2,\mathbb{R}))$. We shall identify them. We give $(\Psl (2,\mathbb{R}))^n$ the topology as a Lie group, $R(\Sf)$ the subspace topology, $R(\Sf)/\Pgl(2,\mathbb{R})$ the quotient topology and $\mathcal{T}(\Sf)$ the induced topology.
   
   We let $\pi \colon R(\Sf) \rightarrow \mathcal{T}(\Sf)$ denote the projection $\rho \mapsto [\rho]$. Then $\pi$ is a fiber bundle with fiber $\Pgl(2,\mathbb{R})$.
   

   \subsubsection{Teichm\"{u}ller Space as Character Variety}
   
   Any discrete subgroup of $\Psl(2,\mathbb{C})$ which has no 2-torsion lifts to $\Sl(2,\mathbb{C})$ (see \cite{culler}). Therefore, for every $\rho \in R(S)$, we can take a lift $\tilde{\rho}$, a representation $\pi_1(\Sf) \rightarrow \Sl(2,\mathbb{R})$ such that $[\tilde{\rho}(\gamma)]=\rho(\gamma)$ where $\gamma \in \pi_1(\Sf)$. Then the set $\hat{R}(\Sf)\coloneqq \{\tilde{\rho} \mid \tilde{\rho} \text{ is a lift of } \rho \in R(\Sf)\}$ is a covering space of $R(\Sf)$ where the quotient map $\hat{R}(\Sf) \rightarrow \hat{R}(\Sf)/\Hom(\pi_1(\Sf),\{\pm I\})=R(\Sf)$ is a covering map (the action of $\delta \in \Hom(\pi_1(\Sf),\{\pm I\})$ is given by $\delta \tilde{\rho}(\gamma)=\delta(\gamma)\tilde{\rho}(\gamma)$). Moreover, $\hat{R}(\Sf)$  is the union of disjoint connected components, each of which is mapped homomorphically onto $R(\Sf)$ by the covering map. Therefore taking a point $\rho \in R(\Sf)$,  a lift $\tilde{\rho}$ of $\rho$ (the number of choices of $\tilde{\rho}$ is $|\Hom(\pi_1(\Sf),\{\pm I\})|$), and the connected component containing $\tilde{\rho}$, we can take a section of the covering. 
   
    On the other hand, for $\Sf_{g,n}$ with $p_1,p_2,\cdots p_n \in \pi_1(\Sf_{g,n})$ homotopic to punctures, we can take $\tilde{\rho}$ so that $\tr (\tilde{\rho}(p_i))=-2$ for $i\in\{1,2, \cdots n\}$ (see \cite{okumura},\cite{saito}). Then the number of choices of $\tilde{\rho}$ is reduced to $|\Hom(\pi_1(\Sf_{g,0}),\{\pm I\})|$. 
    
   For $\rho$, we take such a lift $\tilde{\rho}$ and connected component of $\hat{R}(\Sf)$ containing $\tilde{\rho}$. We let $\tilde{R}(\Sf)$ denote the connected component. Then $R(\Sf)/\Pgl(2,\mathbb{R})$ corresponds to $\tilde{R}(\Sf)/\mathrm{GL}(2,\mathbb{R})$. From now on, for $\rho \in R(\Sf)$, we let $\tilde{\rho}$ denote the lift of $\rho$ such that $\tilde{\rho} \in \tilde{R}(\Sf)$.

   The \textit{character} of $\tilde{\rho} \in \tilde{R}(\Sf)$ is the function $\chi_{\tilde{\rho}} \colon \pi_1(\Sf) \rightarrow \mathbb{R}$ defined by $\gamma \mapsto \tr(\tilde{\rho}(\gamma))$. We consider the set of characters $\{ \chi_{\tilde{\rho}} \mid \tilde{\rho} \in \tilde{R}(\Sf)\}$. In fact there exist finite elements $ \gamma _1', \gamma _2' ,\cdots , \gamma _m' $ of $\pi_1(\Sf)$ such that there is a bijective correspondence between the points of 
   \[
   X(\Sf) \coloneqq \{ (\chi _{\tilde{\rho}}(\gamma_1'), \chi _{\tilde{\rho}}(\gamma_2'), \cdots ,\chi_{\tilde{\rho}}(\gamma_m')) \mid \tilde{\rho} \in \tilde{R}(\Sf) \} \subset \mathbb{R}^m
   \]
   and the points of $\{ \chi_{\tilde{\rho}}\mid \tilde{\rho} \in \tilde{R}(\Sf)\}$ (see \cite{cullershalen}). 
   
   We shall identify the points of $\{ \chi_{\tilde{\rho}} \mid \tilde{\rho} \in \tilde{R}(\Sf)\}$ with the corresponding points of $X(S)$. Since trace is a conjugacy invariant, we can define the map $\mathcal{T}(\Sf) \rightarrow X(\Sf)$ by $[\rho] \mapsto \chi_{\tilde{\rho}}$.

   \begin{lem}
   \label{charactervariety}
   The map $\mathcal{T}(\Sf) \rightarrow X(\Sf)$ defined by $[\rho] \mapsto \chi_{\tilde{\rho}}$ is a bijection.
   \end{lem}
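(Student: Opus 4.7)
The plan is to check in turn that the map $[\rho]\mapsto\chi_{\tilde{\rho}}$ is well-defined, surjective, and injective, with essentially all of the content concentrated in injectivity.

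For well-definedness, if $\rho' = g\rho g^{-1}$ for some $g\in\Pgl(2,\mathbb{R})$, I would pick any lift $\tilde{g}\in\mathrm{GL}(2,\mathbb{R})$ of $g$ and observe that $\tilde{g}\tilde{\rho}\tilde{g}^{-1}$ is a lift of $\rho'$ which lies in $\tilde{R}(\Sf)$, because the $\mathrm{GL}(2,\mathbb{R})$-action preserves this component by the construction recalled in the paper. Since the projection $\tilde{R}(\Sf)\to R(\Sf)$ is a homeomorphism onto $R(\Sf)$, this lift must equal $\tilde{\rho'}$, and conjugation invariance of trace then gives $\chi_{\tilde{\rho'}}=\chi_{\tilde{\rho}}$. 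Surjectivity is tautological from the definition of $X(\Sf)$.

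The substantive step is injectivity, where the key input I would invoke is the classical theorem of Culler--Shalen \cite{cullershalen}: an irreducible representation $\pi_1(\Sf)\to\Sl(2,\mathbb{C})$ is determined up to $\Sl(2,\mathbb{C})$-conjugation by its character. Each $\tilde{\rho}\in\tilde{R}(\Sf)$ is irreducible, for an invariant line in $\mathbb{R}^2$ would force the image in $\Psl(2,\mathbb{R})$ to share a common fixed point on $\partial\mathbb{H}^2$, which is impossible for a faithful discrete representation of the non-abelian group $\pi_1(\Sf)$. Consequently, if $\chi_{\tilde{\rho}_1}=\chi_{\tilde{\rho}_2}$, there exists $h\in\Sl(2,\mathbb{C})$ with $\tilde{\rho}_2=h\tilde{\rho}_1 h^{-1}$.

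The main obstacle is then to descend this conjugacy from $\Sl(2,\mathbb{C})$ to $\mathrm{GL}(2,\mathbb{R})$. I would write $h = h_1 + i h_2$ with real matrices $h_1,h_2$; equating real and imaginary parts of $h\tilde{\rho}_1(\gamma) = \tilde{\rho}_2(\gamma)h$ shows that both $h_1$ and $h_2$ intertwine $\tilde{\rho}_1$ and $\tilde{\rho}_2$ over $\mathbb{R}$. Consider the real polynomial $P(s,t) = \det(s h_1 + t h_2)$. Since $P(1,i) = \det(h)\neq 0$, $P$ is not identically zero, so there exist $s,t\in\mathbb{R}$ with $h' := s h_1 + t h_2 \in \mathrm{GL}(2,\mathbb{R})$ still intertwining, i.e.\ $h'\tilde{\rho}_1 (h')^{-1} = \tilde{\rho}_2$. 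Projecting $h'$ to $\Pgl(2,\mathbb{R})$ conjugates $\rho_1$ into $\rho_2$, so $[\rho_1]=[\rho_2]$ in $\mathcal{T}(\Sf)$. Apart from this short linear-algebra descent and a careful application of Culler--Shalen, every other step is routine bookkeeping in the definitions already set up in Section~\ref{teichmullerspace}.
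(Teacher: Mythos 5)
Your proposal is correct and follows essentially the same route as the paper: both reduce injectivity to the Culler--Shalen result that irreducible $\Sl(2)$-representations are determined up to conjugacy by their characters, and both establish irreducibility by ruling out a common fixed point of $\rho(\pi_1(\Sf))$ on $\mathbb{R}\cup\{\infty\}$ (the paper via completeness of the quotient, you via discreteness and non-solvability). The extra material you supply --- the well-definedness check and the explicit descent of the conjugating element from $\Sl(2,\mathbb{C})$ to $\mathrm{GL}(2,\mathbb{R})$ via $h=h_1+ih_2$ and the polynomial $\det(sh_1+th_2)$ --- is sound but only fills in details the paper delegates to the citation.
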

   \begin{proof}
   Surjectivity is obvious. Therefore we should show that if $\chi_{\tilde{\rho}}=\chi_{\tilde{\rho} '}$, there exists $h \in \mathrm{GL}(2,\mathbb{R})$ such that $\tilde{\rho}' = h \tilde{\rho} h^{-1}$. If $\tilde{\rho}$ is irreducible, the above claim holds (see \cite{cullershalen}). Therefore we should show that $\rho(\pi_1(\Sf))$ does not have common fixed points on $\mathbb{R}\cup \{\infty\}$. 
   
   Suppose that $\rho(\pi_1(\Sf))$ has a common fixed point on $\mathbb{R}\cup \{\infty\}$. Then $\mathbb{H}/\rho(\pi_1(\Sf))$ is a hyperbolic cylinder or an incomplete space. This is a contradiction.
   \end{proof}
   
  \subsubsection{Gluing Construction}
  \label{paramet}
  
  We identify Poincar\'{e} extensions of orientation preserving isometries on $\mathbb{H}$ with elements of $\Psl(2,\mathbb{R})$. We write $R_{1,1} = R(\Sf_{1,1})$ and $R_{0,4} = R(\Sf_{0,4})$. For $a,b\in \mathbb{R}\cup\{\infty\}$, we let $[a,b]$ denote the hyperbolic line joining $a$ and $b$. 
  
  Firstly we consider the case of $\Sf_{1,1}$. Let $u$ and $v$ be generators of $\pi_1(\Sf_{1,1})$ such that $u^{-1}v^{-1}uv$ is homotopic to the puncture. 
  
  Suppose that $\rho \in R_{1,1}$. Then $\rho(u^{-1}v^{-1}uv)$ is parabolic. We let $c$ be the fixed point of $\rho(u^{-1}v^{-1}uv)$. Then $[c,\rho(v)(c)]$, $[\rho(v)(c), \rho(uv)(c)]$, $[\rho(uv)(c),\rho(v^{-1}uv)(c)]$ and $[\rho(v^{-1}uv)(c),c]$ form an ideal quadrilateral. Obviously this ideal quadrilateral is a fundamental domain of $\rho(\pi_1(\Sf_{1,1}))$. By gluing $[c,\rho(v)(c)]$ to $[\rho(uv)(c),\rho(v^{-1}uv)(c)]$ by $\rho(u)$ and  $[\rho(v^{-1}uv)(c),c]$ to $[\rho(v)(c), \rho(uv)(c)]$ by $\rho(v)$, we get a hyperbolic surface isometric to $\mathbb{H}/{\rho(\pi_1(\Sf_{1,1}))}$.
  
  Conversely, for an ideal quadrilateral, if we can take $\varphi$ and $\psi$, Poincar\'{e} extensions of isometries on $\mathbb{H}$ so that they map each opposite side of the quadrilateral and $\varphi ^{-1} \psi ^{-1} \varphi \psi$ is parabolic, then there exists $\rho \in R_{1,1}$ such that $\rho(u)=\varphi$ and $\rho(v)=\psi$ and we can identify $(\varphi,\psi)$ with $\rho$. We call such $\varphi$ and $\psi$ \textit{gluing map}s. We can identify $R_{1,1}$ with
  \[
  \{(\varphi,\psi) \mid \varphi,\psi \text{ are gluing maps of an ideal quadrilateral} \}.
  \]


  Consider an ideal quadrilateral $Q(\lambda)$ on $\mathbb{H}$ with ideal vertices $\lambda,0,1,\infty$ where $\lambda <0$. Let $e_1 \coloneqq [1, \infty]$, $e_2 \coloneqq [\lambda, 0]$, $e_3 \coloneqq [\lambda, \infty]$ and $e_4 \coloneqq [0, 1]$. If we can take gluing maps $\varphi$ and $\psi$ for $Q(\lambda)$ so that 
  \begin{equation}
  \begin{cases}
  \varphi(\infty)=\lambda,\ \varphi(1)=0,\  \psi(\infty)=1,\  \text{ and } \psi(\lambda)=0  \\
  \varphi ^{-1} \psi ^{-1} \varphi \psi \text{ is parabolic},
  \label{gluingcondition}
  \end{cases}
  \end{equation}
 then we get an element of $R_{1,1}$ (as we will see in \ref{parS11}, we can take $\varphi$ and $\psi$ for each $\lambda<0$ and they are not uniquely determined).  
    
    We let
    \[
    R^{\ast}_{1,1} \coloneqq \{(\varphi, \psi) \in R_{1,1} \mid \varphi \text{ and } \psi \text{ satisfies condition (\ref{gluingcondition}) where }\lambda <0\}.
    \]
    
    \begin{figure}[h]
  \centering
  \includegraphics[width=6cm]{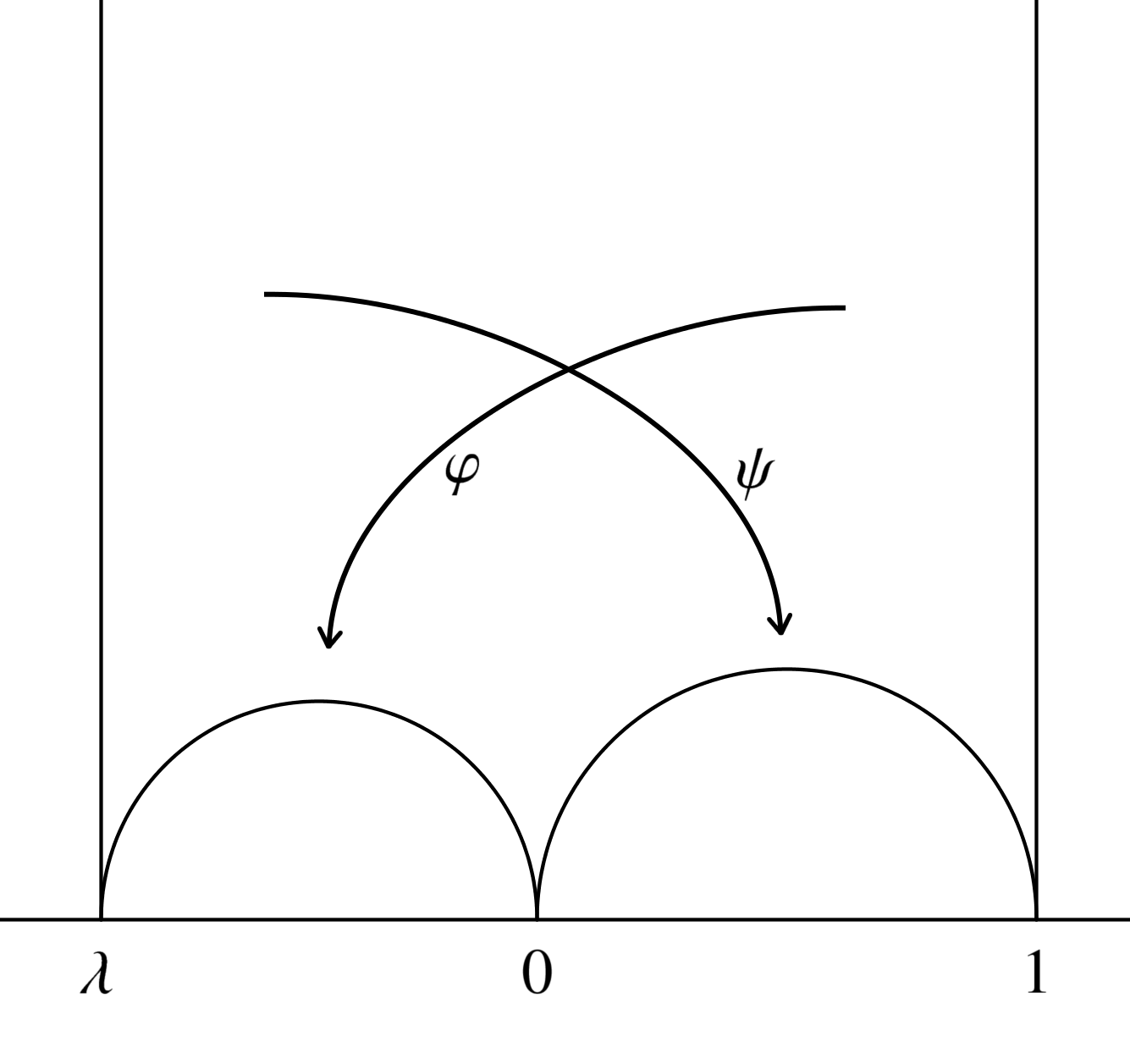}
  \caption{$Q(\lambda)$ and gluing maps.}
  \label{s11}
  \end{figure}
    
   \begin{lem}
   For any $(\varphi',\psi') \in R_{1,1}$, there exist $h\in \Pgl(2,\mathbb{R})$ such that $(h\varphi' h^{-1},h\psi' h^{-1}) \in R^{\ast}_{1,1}$.
   \end{lem}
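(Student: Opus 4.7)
The plan is to use the sharp 3-transitivity of $\mathrm{PGL}(2,\mathbb{R})$ on $\mathbb{R}\cup\{\infty\}$ to normalize three of the four ideal vertices of the fundamental quadrilateral of $(\varphi',\psi')$ to $\infty$, $1$, $0$; the fourth will then necessarily land at some $\lambda<0$, and the gluing conditions (\ref{gluingcondition}) fall out.

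First I would set up the quadrilateral. Let $c\in\mathbb{R}\cup\{\infty\}$ be the unique fixed point of the parabolic element $(\varphi')^{-1}(\psi')^{-1}\varphi'\psi'$; the identity $(\varphi')^{-1}(\psi')^{-1}\varphi'\psi'(c)=c$ rearranges to $(\psi')^{-1}\varphi'\psi'(c)=\varphi'(c)$. By the gluing construction recalled in \S\ref{paramet}, the four points $c$, $\psi'(c)$, $\varphi'\psi'(c)$, $\varphi'(c)$ are the ideal vertices of the fundamental quadrilateral $Q'\subset\mathbb{H}$, appearing in this cyclic order (or its reverse) on $\partial\mathbb{H}$, with the two pairs of opposite sides glued by $\varphi'$ and by $\psi'$ respectively.

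By 3-transitivity, let $h\in\mathrm{PGL}(2,\mathbb{R})$ be the unique element with $h(c)=\infty$, $h(\psi'(c))=1$, $h(\varphi'\psi'(c))=0$, and set $\lambda := h(\varphi'(c))$. Writing $\varphi := h\varphi'h^{-1}$ and $\psi := h\psi'h^{-1}$, a one-line substitution on the four normalized vertices gives
\[
\varphi(\infty)=\lambda,\quad \varphi(1)=0,\quad \psi(\infty)=1,\quad \psi(\lambda)=0,
\]
which is the first line of (\ref{gluingcondition}); the commutator $\varphi^{-1}\psi^{-1}\varphi\psi$ stays parabolic under conjugation, so the second line is automatic.

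The main point requiring care is showing $\lambda<0$. The pairs $\{c,\varphi'\psi'(c)\}$ and $\{\psi'(c),\varphi'(c)\}$ are the two pairs of opposite vertices of $Q'$, so they separate each other on the boundary circle $\partial\mathbb{H}$. Since every element of $\mathrm{PGL}(2,\mathbb{R})$ preserves or reverses the cyclic order on $\mathbb{R}\cup\{\infty\}$, the image pairs $\{\infty,0\}$ and $\{1,\lambda\}$ also separate each other; on $\mathbb{R}\cup\{\infty\}$ this forces $\lambda$ into the arc from $0$ to $\infty$ not containing $1$, i.e.\ $\lambda<0$. Equivalently, one computes $\lambda=(\varphi'(c),\psi'(c);\varphi'\psi'(c),c)$ as a cross-ratio and observes that it is negative precisely because the opposite-vertex pairs $\{\varphi'(c),\psi'(c)\}$ and $\{\varphi'\psi'(c),c\}$ separate on the circle.
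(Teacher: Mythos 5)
Your proposal is correct and follows essentially the same route as the paper: both take the fixed point $c$ of the parabolic commutator, form the ideal quadrilateral with vertices $c,\psi'(c),\varphi'\psi'(c),\varphi'(c)$ (the paper writes the last vertex as $\psi'^{-1}\varphi'\psi'(c)$, which equals $\varphi'(c)$ as you note), and conjugate by the element of $\Pgl(2,\mathbb{R})$ carrying it to $Q(\lambda)$. Your write-up is in fact slightly more careful than the paper's, which asserts the existence of $h$ with $h(Q')=Q(\lambda)$ without spelling out the $3$-transitivity normalization or the separation argument giving $\lambda<0$.
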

   \begin{proof}
   Suppose that $(\varphi',\psi') \in R_{1,1}$. Then ${\varphi '} ^{-1} {\psi '} ^{-1} \varphi ' \psi' $ is parabolic.  We let $c$ be the fixed point of ${\varphi'} ^{-1} {\psi'} ^{-1} \varphi' \psi' $ on $\mathbb{R}\cup \{ \infty \}$. Let $e_1' \coloneqq [\psi'(c),c]$, $e_2' \coloneqq \varphi'(e_1')$, $e_3' \coloneqq [{\psi'} ^{-1} \varphi' \psi'(c),c]$ and $e_4' \coloneqq \psi '(e_3')$. Then $e_1',e_2',e_3'$ and $e_4'$ form an ideal quadrilateral $Q'$ and we can construct a complete hyperbolic once-puncutered torus by gluing $e_1'$ to $e_2'$ by $\varphi '$ and $e_3'$ to $e_4'$ by $\psi '$. Then there exists $h\in \Pgl(2,\mathbb{R})$ and $\lambda$ such that $h(Q')=Q(\lambda)$ and $h(e_i')=e_i$ for $i=1,2,3,4$. Then $h\varphi' h^{-1}$ and $h\psi' h^{-1}$ are gluing maps of $h(Q')$. Therefore $(h\varphi' h^{-1},h\psi' h^{-1}) \in R^{\ast}_{1,1}$.
   \end{proof}

   \begin{lem}
   For any $(\varphi,\psi) \in R^{\ast}_{1,1}$, there does not exist $h \in \Pgl(2,\mathbb{R})\setminus \{I\}$ such that $(h\varphi h^{-1},h\psi h^{-1}) \in R^{\ast}_{1,1}$.
   \end{lem}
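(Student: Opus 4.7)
The plan is to leverage the three normalizing conditions in (\ref{gluingcondition}), which prescribe the images of $\infty$, $1$, and $\lambda$ under $\varphi$ and $\psi$. Since a nontrivial element of $\Pgl(2,\mathbb{R})$ can fix at most two points on $\mathbb{R}\cup\{\infty\}$, it suffices to show that any admissible $h$ must fix each of $\infty$, $1$, and $0$.

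First, I would show $h(\infty)=\infty$ by identifying $\infty$ as the parabolic fixed point of the commutator $\varphi^{-1}\psi^{-1}\varphi\psi$. A direct chain computation from the conditions $\psi(\infty)=1$, $\varphi(1)=0$, $\psi(\lambda)=0$, $\varphi(\infty)=\lambda$ shows the commutator maps $\infty$ to itself, and since the commutator is parabolic by hypothesis, $\infty$ is its unique fixed point. Applying the same observation to the conjugate pair $(h\varphi h^{-1},h\psi h^{-1})$, assumed to lie in $R^{\ast}_{1,1}$ for some $\lambda'<0$, identifies the fixed point of the conjugated parabolic as both $\infty$ and $h(\infty)$, forcing $h(\infty)=\infty$. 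Hence $h$ is affine, of the form $h(x)=ax+b$ with $a\neq 0$.

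Second, I would read off two more linear relations between $a$ and $b$. From $(h\psi h^{-1})(\infty)=1$ combined with $\psi(\infty)=1$ I get $h(1)=1$, i.e.\ $a+b=1$. To locate the third point, note that $(h\varphi h^{-1})(\infty)=\lambda'$ rewrites as $\lambda'=h(\lambda)$, so $h^{-1}(\lambda')=\lambda$; substituting into $(h\psi h^{-1})(\lambda')=0$ together with $\psi(\lambda)=0$ gives $h(0)=0$, i.e.\ $b=0$. Combining yields $a=1$ and $b=0$, so $h=I$, contradicting $h\neq I$.

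The only potential complication I foresee is that $\Pgl(2,\mathbb{R})$ has two connected components; however, any element fixing $\infty$ still has the affine form $x\mapsto ax+b$ with $a\in\mathbb{R}\setminus\{0\}$ regardless of the sign of $a$, so the above argument applies uniformly. All three of the prescribed values in (\ref{gluingcondition}) are essential: using any fewer would leave a one-parameter family of possible $h$'s.
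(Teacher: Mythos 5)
Your proof is correct and follows the same route as the paper's: the paper simply asserts that $h$ must fix $0$, $1$, and $\infty$ and hence be the identity, and your argument supplies exactly the omitted justification (identifying $\infty$ as the parabolic fixed point of the commutator, then chasing $1=\psi(\infty)$ and $0=\psi(\lambda)$ through the normalization). Your remark about the orientation-reversing component of $\Pgl(2,\mathbb{R})$ is a worthwhile extra check that the paper leaves implicit.
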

   \begin{proof}
   Suppose that $h \in \Pgl(2,\mathbb{R})\setminus \{I \}$ and $(\varphi,\psi) \in R^{\ast}_{1,1}$ satisfies $(h\varphi h^{-1},h\psi h^{-1}) \in R^{\ast}_{1,1}$. Then $h$ fixes $0,1$ and $\infty$. Since $h$ fixes three points, $h$ is the identity map. This is a contradiction.
   \end{proof}
    
    Recall that $\pi \colon R(\Sf)\rightarrow \mathcal{T}(\Sf)$ is a fiber bundle with fiber $\Pgl(2,\mathbb{R})$.
 \begin{cor}
 For any $[\rho] \in \mathcal{T}(\Sf_{1,1})$, there uniquely exists $\rho' \in R^{\ast}_{1,1}$ such that $\pi(\rho')=[\rho]$.
 \end{cor}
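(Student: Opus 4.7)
The plan is to deduce the corollary directly from the two preceding lemmas, using the description of $\mathcal{T}(\Sf_{1,1})$ as the quotient $R_{1,1}/\Pgl(2,\mathbb{R})$ established earlier in the section. Since $\pi$ is the quotient map for the $\Pgl(2,\mathbb{R})$-action by conjugation, two points $\rho_1,\rho_2 \in R_{1,1}$ satisfy $\pi(\rho_1) = \pi(\rho_2)$ if and only if $\rho_2 = h\rho_1 h^{-1}$ for some $h \in \Pgl(2,\mathbb{R})$. So the corollary amounts to showing that each $\Pgl(2,\mathbb{R})$-orbit on $R_{1,1}$ meets $R^{\ast}_{1,1}$ in exactly one point.

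For existence, given $[\rho] \in \mathcal{T}(\Sf_{1,1})$, I would pick any representative $\rho_0 \in R_{1,1}$ with $\pi(\rho_0)=[\rho]$, then apply the first of the two lemmas above to obtain $h \in \Pgl(2,\mathbb{R})$ such that $\rho' \coloneqq (h\rho_0(\cdot)h^{-1}) = (h\varphi_0 h^{-1}, h\psi_0 h^{-1}) \in R^{\ast}_{1,1}$. Since $\rho'$ is $\Pgl(2,\mathbb{R})$-conjugate to $\rho_0$, it lies in the same fiber, giving $\pi(\rho') = [\rho]$.

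For uniqueness, suppose $\rho', \rho'' \in R^{\ast}_{1,1}$ both project to $[\rho]$. Then there exists $h \in \Pgl(2,\mathbb{R})$ with $\rho'' = h\rho' h^{-1}$, so in particular $(h\varphi h^{-1}, h\psi h^{-1}) \in R^{\ast}_{1,1}$ where $\rho' = (\varphi,\psi)$. The second lemma above then forces $h = I$, hence $\rho' = \rho''$.

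Since the argument is really just a repackaging of the two lemmas together with the quotient description of Teichmüller space, no step is genuinely difficult; the only subtlety worth flagging is making explicit that the fibers of $\pi$ are precisely the $\Pgl(2,\mathbb{R})$-orbits, which is immediate from the construction of $\mathcal{T}(\Sf_{1,1})$ given in Subsection~\ref{teichmullerspace}.
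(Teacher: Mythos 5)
Your proposal is correct and follows exactly the route the paper intends: the corollary is stated without proof precisely because it is the immediate combination of the two preceding lemmas (existence from the first, uniqueness from the second) with the identification of the fibers of $\pi$ as the $\Pgl(2,\mathbb{R})$-orbits. Nothing in your argument deviates from or adds to what the paper implicitly relies on.
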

 Therefore we can define a section of $\pi$ by $[\rho]\mapsto \rho'$ where $\rho' \in R^{\ast}_{1,1}$ and $\pi(\rho')=[\rho]$. $R^{\ast}_{1,1}$ is the image of the section.

  Next we consider the case of $\Sf_{0,4}$. Let $u$, $v$, $w$ be generators of $\pi_1(\Sf_{0,4})$ such that $u$, $v$, $w$ and $(wvu)^{-1}$ are homotopic to distinct punctures and have the same orientation (see Figure \ref{generators}).
  
   Suppose that $\rho \in R_{0,4}$. Then $\rho(u)$, $\rho(v)$, $\rho(w)$ and $\rho((wvu)^{-1})$ are parabolic and have the same orientation. We let $c_1$, $c_2$, $c_3$ and $c_4$ be fixed points of them. Then $[c_1,\rho(u)(c_4)]$, $[\rho(u)(c_4),c_2]$, $[c_2,\rho(vu)(c_4)]$, $[\rho(vu)(c_4),c_3]$, $[c_3,c_4]$ and $[c_4,c_1]$ form an ideal hexagon and this is a fundamental domain of $\rho(\pi_1(\Sf_{0,4}))$. Gluing $[c_4,c_1]$ to $[c_1,\rho(u)(c_4)]$ by $\rho(u)$, $[\rho(u)(c_4),c_2]$ to $[c_2,\rho(vu)(c_4)]$ by $\rho(v)$ and $[\rho(vu)(c_4),c_3]$ to $[c_3,c_4]$ by $\rho(\omega)$, we get a hyperbolic surface isometric to $\mathbb{H}/{\rho(\pi_1(\Sf_{0,4}))}$.
  
  \begin{figure}[h]
  \centering
  \includegraphics[width=5cm]{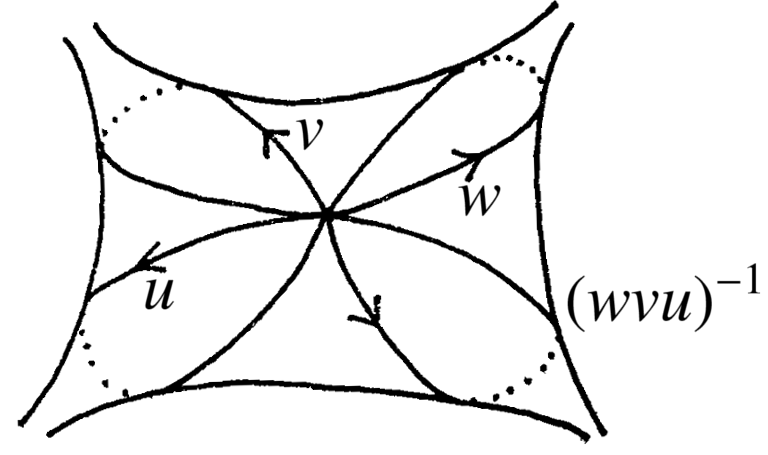}
  \caption{Generators of $\pi_1(\Sf_{0,4})$.}
  \label{generators}
\end{figure}
  
  Conversely, for an ideal hexagon with a vertex $v_i$ and an edge $e_i=[v_i,v_{i+1}]$ where $i \in \mathbb{Z}/{6\mathbb{Z}}$, if we can take $\varphi$, $\psi$ and $\omega$, Poincar\'{e} extensions of isometries on $\mathbb{H}$ so that $\varphi$ maps $e_1$ to $e_2$, $\psi$ maps $e_3$ to $e_4$, $\omega$ maps $e_5$ to $e_6$ and $\varphi$, $\psi$, $\omega$, $\omega \psi \varphi$ are parabolic with fixed points $v_2,v_4,v_6,v_1$, then there exists $\rho \in R_{0,4}$ such that $\rho(u)=\varphi$, $\rho(v)=\psi$ and $\rho(w)=\omega$. Then we can identify $(\varphi,\psi,\omega)$ with $\rho$. We call such $\varphi$, $\psi$ and $\omega$ \textit{gluing map}s. We can identify $R_{0,4}$ with
  \[
  \{(\varphi,\psi,\omega) \mid \varphi,\psi \text{ and } \omega \text{ are gluing maps of an ideal hexagon} \}.
  \]
  
  Consider an ideal hexagon $H(\kappa,\lambda,\mu)$ with edges $[\kappa,\lambda]$, $[\lambda,0]$, $[0,1]$, $[1,\mu]$, $[\mu,\infty]$ and $[\infty,\kappa]$ where $\kappa<\lambda<0<1<\mu$. If we can take gluing maps $\varphi$, $\psi$ and $\omega$ for $H(\kappa,\lambda,\mu)$ so that 
  \begin{equation}
  \begin{cases}
  \label{gluingcondition2}
  \varphi(1)=1,\ \varphi(\mu)=0,\ \psi(\lambda)=\lambda,\ \psi(0)=\kappa,\  \omega(\infty)=\infty \text{ and } \omega(\kappa)=\mu\\
   \varphi,\psi,\omega\text{ and }\omega \psi \varphi \text{ are parabolic},
  \end{cases}
  \end{equation}
 then we get an element of $R_{0,4}$ (as we will see in \ref{parS04}, such $\varphi$, $\psi$ and $\omega$ exist where $\kappa=\lambda \mu$ and they are uniquely determined for each $(\lambda,\mu)$).
  
  We let 
  \[
    R^{\ast}_{0,4} \coloneqq \{(\varphi, \psi,\omega) \in R_{0,4} \mid \varphi , \psi ,\omega \text{ satisfies condition (\ref{gluingcondition2}) where }\kappa<\lambda<0<1<\mu
    \}.
    \]
      
   \begin{figure}[h]
    \centering
  \includegraphics[width=9cm]{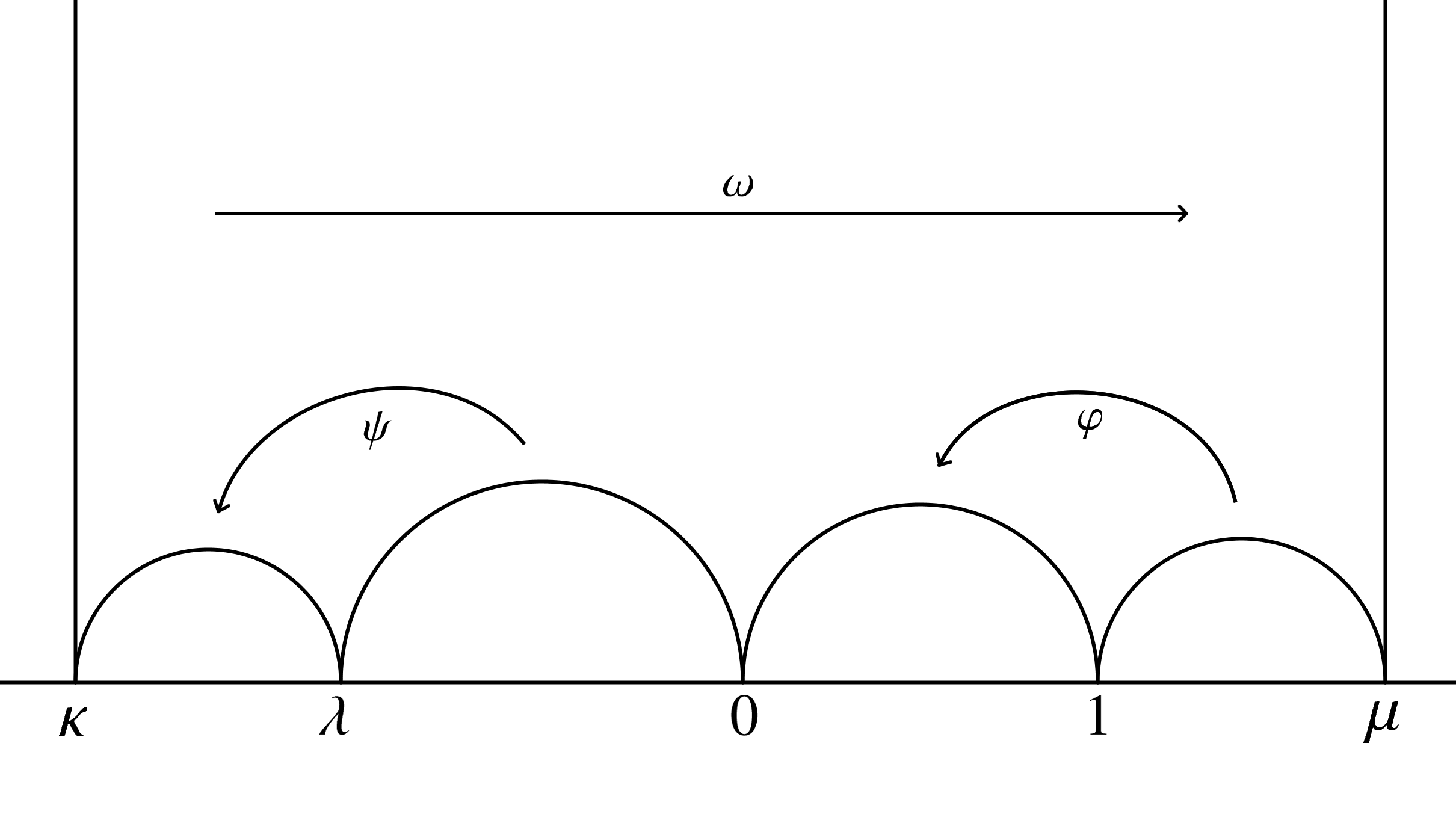}
  \caption{$H(\kappa,\lambda,\mu)$ and gluing maps.}
  \label{s04}
\end{figure}
    By a similar argument to that of $R^{\ast}_{1,1}$, we see that $R^{\ast}_{0,4}$ is the image of a section of $\pi$. We will give coordinates to $R^{\ast}_{1,1}$ and $R^{\ast}_{0,4}$ in \ref{parS11} and \ref{parS04}.

   \subsubsection{$\Sf_{1,1}$}
   \label{parS11}
   
   We write $\tilde{R}_{1,1}=\tilde{R}(\Sf_{1,1})$ and $X_{1,1} = X(\Sf_{1,1})$. We search gluing maps $\varphi$, $\psi$ satisfying condition (\ref{gluingcondition}) in \ref{paramet} for an ideal quadrilateral $Q(\lambda)$. We let
   \[
   \tilde{\varphi} \coloneqq \begin{pmatrix} a & b \\ c & d \end{pmatrix},\  \tilde{\psi} \coloneqq \begin{pmatrix} a' & b' \\ c' & d' \end{pmatrix} \  \in \Sl(2,\mathbb{R})
   \]
    so that $\tilde{\varphi}, \tilde{\psi}$ represent $\varphi, \psi$, and $\tr \tilde{\varphi} ^{-1} \tilde{\psi} ^{-1} \tilde{\varphi} \tilde{\psi} = -2$. Then the condition which $\tilde{\varphi}$ and $\tilde{\psi}$ must satisfy is as follows:
   \[
   \begin{cases}
    \det \tilde{\varphi} = \det \tilde{\psi} = 1 \\
    \lambda < 0 \\
   \varphi (\infty) = \lambda, \varphi (1) = 0, \psi (\infty) = 1, \psi (\lambda) = 0  \\
    \tr \tilde{\varphi} ^{-1} \tilde{\psi} ^{-1} \tilde{\varphi} \tilde{\psi} = -2.
   \end{cases}
   \]
  We get
    \begin{align*}
    &b=-a,\  c=-a'^2/a,\  d=(1+a')/a,\\
    &b'=a^2/a',\  c'=a',\  d'=(1+a^2)/a',\  \lambda = -(a/a')^2.
    \end{align*}
    Replace variables as $\alpha = a, \beta = a'$. Then,
   \[
   \tilde{\varphi} = 
    \begin{pmatrix}
    \alpha & -\alpha \\
    -\frac{\beta ^2}{\alpha} & \frac{1+\beta ^2}{\alpha}
    \end{pmatrix}
    ,\ 
    \tilde{\psi} = 
    \begin{pmatrix}
    \beta & \frac{\alpha ^2}{\beta} \\
    \beta & \frac{1+\alpha ^2}{\beta}
    \end{pmatrix}
   \]
   where $\alpha,\beta \in \mathbb{R}\setminus\{0\}$.

   Sometimes we write $\tilde{\varphi} (\alpha ,\beta) = \tilde{\varphi}$ and $\tilde{\psi} (\alpha ,\beta) = \tilde{\psi}$ regarding them as parametrized by $(\alpha ,\beta)$ and for $M \in \langle \tilde{\varphi}(\alpha,\beta),\tilde{\psi}(\alpha,\beta) \rangle$, we write $M(\alpha,\beta)$. Then we can write
   \[
   R^\ast_{1,1}=\{([\tilde{\varphi} (\alpha,\beta)], [\tilde{\psi} (\alpha,\beta)]) \mid \alpha,\beta \in \mathbb{R}\setminus\{0\}\}.
   \]
   
    On the other hand, $\{(\tilde{\varphi} (\alpha,\beta), \tilde{\psi} (\alpha,\beta)) \mid \alpha,\beta \in \mathbb{R}\setminus\{0\}\} \subset \hat{R}(\Sf_{1,1})$ has four connected components (remark that $|\Hom(\pi_1(\Sf_{1,0}),\{\pm I\})|=4$), namely $\{(\tilde{\varphi} (\alpha,\beta), \tilde{\psi} (\alpha,\beta)) \mid \alpha,\beta >0\}$, $\{(\tilde{\varphi} (\alpha,\beta), \tilde{\psi} (\alpha,\beta)) \mid \alpha>0,\beta<0\}$, $\{(\tilde{\varphi} (\alpha,\beta), \tilde{\psi} (\alpha,\beta)) \mid \alpha,\beta <0\}$ and $\{(\tilde{\varphi} (\alpha,\beta), \tilde{\psi} (\alpha,\beta)) \mid \alpha<0,\beta >0\}$. Each of them is the image of a section of the covering $\{(\tilde{\varphi} (\alpha,\beta), \tilde{\psi} (\alpha,\beta)) \mid \alpha,\beta \in \mathbb{R}\setminus\{0\}\} \rightarrow R^\ast_{1,1}$. We can let $\tilde{R}_{1,1}$ be the connected component of $\hat{R}(\Sf_{1,1})$ containing $\{(\tilde{\varphi} (\alpha,\beta), \tilde{\psi} (\alpha,\beta)) \mid \alpha,\beta >0\}$.
   
   Then we can define the continuous injection $p_{1,1} \colon \mathbb{R}_+^2 \rightarrow R_{1,1}$ by 
   \[
   (\alpha,\beta) \longmapsto ([\tilde{\varphi} (\alpha,\beta)], [\tilde{\psi} (\alpha,\beta)]).
   \] 
   Then $p_{1,1}(\mathbb{R}_+^2)=R^\ast_{1,1}$. Therefore $\pi \circ p_{1,1} \colon \mathbb{R}_+^2 \rightarrow \mathcal{T}(S_{1,1})$ is a homeomorphism. We define 
   \[
   \Phi_{1,1} \coloneqq (\pi \circ p_{1,1})^{-1} \colon \mathcal{T}(\Sf_{1,1}) \rightarrow \mathbb{R}_+^2
   \]
    as a global coordinate of $\mathcal{T}(\Sf_{1,1})$.
  
  We define $\tilde{p}_{1,1} \colon \mathbb{R}_+^2 \rightarrow \tilde{R}_{1,1}$ by 
  \[
  (\alpha,\beta) \longmapsto (\tilde{\varphi} (\alpha,\beta), \tilde{\psi} (\alpha,\beta))
  \]
   and $X_{1,1}$ by
    \[
    \{ (\tr \tilde{\rho} (u),\tr \tilde{\rho}(v), \tr \tilde{\rho} (uv)) \mid \tilde{\rho} \in \tilde{R}_{1,1}\}.
    \]
     We can verify the well-definedness of $X_{1,1}$ by using the trace identity: $\tr AB +\tr AB^{-1}=\tr A \tr B$. 

  \begin{cor}
   The map $\mathbb{R}_+^2 \rightarrow X_{1,1}$ defined by
  \begin{align*}
  (\alpha, \beta) & \mapsto \chi_{\tilde{p}_{1,1}(\alpha,\beta)}\\
  & = (\tr \tilde{\varphi} (\alpha, \beta),\tr \tilde{\psi} (\alpha, \beta), \tr \tilde{\varphi} \tilde{\psi}(\alpha, \beta)) \\
  &=\left(\frac{\alpha^2+\beta^2+1}{\alpha},\frac{\alpha^2+\beta^2+1}{\beta},\frac{\alpha^2+\beta^2+1}{\alpha \beta}\right)
  \end{align*} 
  is a bijection.
  \end{cor}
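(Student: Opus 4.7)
The plan is to prove the two claims of the corollary separately: first to verify the explicit trace formulas by a direct matrix computation, and then to obtain bijectivity by chaining Lemma \ref{charactervariety} with the homeomorphism $\pi \circ p_{1,1}\colon \mathbb{R}_+^2 \to \mathcal{T}(\Sf_{1,1})$ established just before the corollary.

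For the trace formulas, I would read $\tr\tilde{\varphi}(\alpha,\beta)$ and $\tr\tilde{\psi}(\alpha,\beta)$ directly off the diagonal entries of the explicit matrices, obtaining $(\alpha^2+\beta^2+1)/\alpha$ and $(\alpha^2+\beta^2+1)/\beta$ respectively. For the third coordinate I would expand the product $\tilde{\varphi}(\alpha,\beta)\tilde{\psi}(\alpha,\beta)$: the $(1,1)$-entry collapses to $\alpha\beta-\alpha\beta=0$, while the $(2,2)$-entry simplifies as
\[
-\frac{\beta^2}{\alpha}\cdot\frac{\alpha^2}{\beta} + \frac{1+\beta^2}{\alpha}\cdot\frac{1+\alpha^2}{\beta} = \frac{-\alpha^2\beta^2 + (1+\alpha^2)(1+\beta^2)}{\alpha\beta} = \frac{1+\alpha^2+\beta^2}{\alpha\beta},
\]
yielding the third claimed coordinate. (Sanity check: all three values are positive for $\alpha,\beta>0$, consistent with hyperbolic traces of non-parabolic elements exceeding $2$ in absolute value.)

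For bijectivity, I would note that the map of the corollary factors as
\[
\mathbb{R}_+^2 \xrightarrow{\pi\circ p_{1,1}} \mathcal{T}(\Sf_{1,1}) \xrightarrow{[\rho]\mapsto \chi_{\tilde{\rho}}} X_{1,1},
\]
where the first arrow is a homeomorphism by the paragraph defining $\Phi_{1,1}$, and the second arrow is a bijection by Lemma \ref{charactervariety}. To identify this composition with $(\alpha,\beta)\mapsto\chi_{\tilde{p}_{1,1}(\alpha,\beta)}$, I would invoke the definition of $\tilde{R}_{1,1}$ as the connected component of $\hat{R}(\Sf_{1,1})$ containing $\{(\tilde{\varphi}(\alpha,\beta),\tilde{\psi}(\alpha,\beta)) : \alpha,\beta>0\}$: this forces the distinguished $\Sl(2,\mathbb{R})$-lift of $p_{1,1}(\alpha,\beta)\in R^\ast_{1,1}$ to be exactly $\tilde{p}_{1,1}(\alpha,\beta)$, so the two descriptions of the map agree.

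The main (mild) obstacle is this last bookkeeping step: one must verify that the globally chosen lift used in defining $X_{1,1}$ is consistent with the explicit parametrization $\tilde{p}_{1,1}$. This is already arranged by the construction of $\tilde{R}_{1,1}$, so no real work is needed. The matrix algebra is entirely routine, and no new estimate or structural argument is required beyond what Lemma \ref{charactervariety} and the preceding discussion already provide.
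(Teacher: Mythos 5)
Your proof is correct and follows the route the paper intends: the corollary is stated there without an explicit proof, being an immediate consequence of Lemma \ref{charactervariety} composed with the homeomorphism $\pi\circ p_{1,1}\colon\mathbb{R}_+^2\to\mathcal{T}(\Sf_{1,1})$, which is exactly your factorization. Your matrix computations check out, and the lift-consistency step is indeed already arranged by the choice of $\tilde{R}_{1,1}$ as the component containing the image of $\tilde{p}_{1,1}$.
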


  \subsubsection{$\Sf_{0,4}$}
  \label{parS04}

  We write  $\tilde{R}_{0,4}=\tilde{R}(\Sf_{0,4})$ and $X_{0,4} = X(\Sf_{0,4})$. We search gluing maps $\varphi$, $\psi$, $\omega$ satisfying condition (\ref{gluingcondition2}) in \ref{paramet} for an ideal hexagon $H(\kappa,\lambda,\mu)$. We let
  \[
  \tilde{\varphi} := \begin{pmatrix} a & b \\ c & d \end{pmatrix},\  \tilde{\psi} \coloneqq \begin{pmatrix} a' & b' \\ c' & d' \end{pmatrix},\  \tilde{\omega} \coloneqq \begin{pmatrix} -1 & -\mu+\kappa \\ 0 & -1 \end{pmatrix} \ \in \Sl(2,\mathbb{R}),
  \] 
   so that $\tilde{\varphi}$, $\tilde{\psi}$, $\tilde{\omega}$ represent $\varphi$, $\psi$, $\omega$, and $\tr \tilde{\varphi}=\tr \tilde{\psi}= \tr \tilde{\omega} = \tr  \tilde{\omega}\tilde{\psi} \tilde{\varphi}  = -2$. Then the condition which $\tilde{\varphi}$, $\tilde{\psi}$ and $\tilde{\omega}$ must satisfy is as follows:
  \[
   \begin{cases}
    \det \tilde{\varphi} = \det \tilde{\psi} = 1 \\
     \varphi(1)=1,\ \varphi(\mu)=0,\ \psi(\lambda)=\lambda,\ \psi(0)=\kappa\\
    \kappa < \lambda < 0<1<\mu \\
    \tr \tilde{\varphi}=\tr \tilde{\psi}=\tr  \tilde{\omega} \tilde{\psi} \tilde{\varphi}  = -2.
   \end{cases}
   \]
 We get 
  \begin{align*}
  &b=-a-1,\  c=a+1,\ d=-a-2,\\
   &a'= -a-2,\  b'=(a+1) \lambda,\  c'=- (a+1)/\lambda,\  d'=a,\\
   &\kappa=(a+1)\lambda/a,\ \mu=(a+1)/a,\\
    &a>0,\ \lambda <0.
  \end{align*}
   Replace variables as $\alpha = a, \beta = - \lambda$. Then
   \[
   \tilde{\varphi} = 
  \begin{pmatrix}\alpha & -\alpha-1\\
\alpha+1 & -\alpha-2\end{pmatrix}
    ,\ 
    \tilde{\psi} = 
   \begin{pmatrix}-\alpha-2 & - \left( \alpha+1\right) \beta\\
\frac{\alpha+1}{\beta} & \alpha\end{pmatrix}
    ,\ 
    \tilde{\omega} = 
    \begin{pmatrix}
    -1 & - \frac{(\alpha +1)(\beta +1)}{\alpha } \\
    0 & -1
    \end{pmatrix}
   \]
   where $\alpha,\beta>0$.
   
   Sometimes we write $\tilde{\varphi} (\alpha ,\beta) = \tilde{\varphi}$, $\tilde{\psi} (\alpha ,\beta) = \tilde{\psi}$ and $\tilde{\omega} (\alpha ,\beta) = \tilde{\omega}$ regarding them as parametrized by $(\alpha ,\beta)$ and for $M \in \langle \tilde{\varphi}(\alpha,\beta) ,\tilde{\psi}(\alpha,\beta), \tilde{\omega}(\alpha,\beta) \rangle$, we write $M(\alpha,\beta)$. Then we can write
   \[
   R^\ast_{0,4}=\{([\tilde{\varphi} (\alpha , \beta)], [\tilde{\psi} (\alpha ,\beta)], [\tilde{\omega} (\alpha ,\beta)]) \mid \alpha, \beta>0\}.
   \]
   
   On the other hand, $\{(\tilde{\varphi} (\alpha,\beta), \tilde{\psi} (\alpha,\beta), \tilde{\omega}(\alpha,\beta)) \mid \alpha,\beta>0\} \subset \hat{R}(\Sf_{0,4})$ is connected and the covering map $\{(\tilde{\varphi} (\alpha,\beta), \tilde{\psi} (\alpha,\beta), \tilde{\omega}(\alpha,\beta)) \mid \alpha,\beta>0\} \rightarrow R^\ast_{0,4}$ is a homeomorphism (remark that $|\Hom(\pi_1(\Sf_{0,0}),\{\pm I\})|=1$). We can let $\tilde{R}_{0,4}$ be the connected component of $\hat{R}(\Sf_{0,4})$ containing $\{(\tilde{\varphi} (\alpha,\beta), \tilde{\psi} (\alpha,\beta), \tilde{\omega}(\alpha,\beta)) \mid \alpha,\beta>0\}$.
   
   Then we can define the map $p_{0,4} \colon \mathbb{R}_+^2 \rightarrow R_{0,4}$ by 
   \[
   (\alpha , \beta) \mapsto ([\tilde{\varphi} (\alpha , \beta)], [\tilde{\psi} (\alpha ,\beta)], [\tilde{\omega} (\alpha ,\beta)]).
   \]
     Then $p_{0,4}$ is a continuous injection and $p_{0,4}(\mathbb{R}_+^2)=R^\ast_{0,4}$. Therefore $\pi \circ p_{0,4} \colon \mathbb{R}_+^2 \rightarrow \mathcal{T}(\Sf_{0,4})$ is a homeomorphism. We define 
     \[
     \Phi_{0,4} \coloneqq (\pi \circ p_{0,4})^{-1} \colon \mathcal{T}(\Sf_{0,4}) \rightarrow \mathbb{R}_+^2
     \]
      as a global coordinate of $\mathcal{T}(\Sf_{0,4})$.
  
  We define $\tilde{p}_{0,4} \colon \mathbb{R}_+^2 \rightarrow \tilde{R}_{0,4}$ by 
  \[
  (\alpha,\beta) \longmapsto (\tilde{\varphi} (\alpha,\beta), \tilde{\psi} (\alpha,\beta), \tilde{\omega}(\alpha,\beta))
  \]
   and $X_{0,4}$ by
   \[
   \{ (\tr \tilde{\rho} (uv), \tr \tilde{\rho} (vw),\tr \tilde{\rho}(wu)) \mid \tilde{\rho} \in \tilde{R}_{0,4}\}.
   \] 
   Well-definedness of $X_{0,4}$ follows from a similar argument to that of $X_{1,1}$.
  \begin{cor}
  The map $\mathbb{R}_+^2 \rightarrow X_{0,4}$ defined by
  \begin{align*}
  (\alpha,\beta) & \mapsto \chi_{\tilde{p}_{0,4}(\alpha,\beta)}\\
   &= (\tr \tilde{\varphi} \tilde{\psi} (\alpha, \beta) ,\tr \tilde{\psi} \tilde{\omega}(\alpha, \beta)  ,\tr \tilde{\omega} \tilde{\varphi}(\alpha, \beta) )\\
  &=\left( 2-\frac{{{\left( \alpha+1\right) }^{2}}{{\left( \beta+1\right) }^{2}}}{\beta},2-\frac{{{\left( \alpha+1\right) }^{2}}\,\left( \beta+1\right) }{\alpha\beta},2-\frac{{{\left( \alpha+1\right) }^{2}}\,\left( \beta+1\right) }{\alpha} \right)
  \end{align*}
  is a bijection.
  \end{cor}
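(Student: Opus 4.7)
The strategy is to recognize the stated map as a composition of bijections already established in the excerpt, and then to verify the explicit trace formula by direct matrix multiplication.

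For bijectivity, I would factor the map as
\[
\mathbb{R}_+^2 \xrightarrow{\;\pi \circ p_{0,4}\;} \mathcal{T}(\Sf_{0,4}) \xrightarrow{\;[\rho]\mapsto \chi_{\tilde{\rho}}\;} X_{0,4}.
\]
The first arrow is a homeomorphism (hence bijection) by the discussion immediately preceding this corollary, where it is shown that $p_{0,4}(\mathbb{R}_+^2)=R^\ast_{0,4}$ is the image of a section of $\pi$. The second arrow is a bijection by Lemma \ref{charactervariety}. Finally, because $\tilde{p}_{0,4}(\alpha,\beta)$ is, by construction, the distinguished lift of $p_{0,4}(\alpha,\beta)$ lying in $\tilde{R}_{0,4}$, the character of the class $[\rho]=\pi(p_{0,4}(\alpha,\beta))$ is exactly $\chi_{\tilde{p}_{0,4}(\alpha,\beta)}$. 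Thus the stated map is the composition of the two bijections and so is itself a bijection.

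For the explicit formula, I would compute each of the three traces directly from the matrix expressions for $\tilde{\varphi}(\alpha,\beta)$, $\tilde{\psi}(\alpha,\beta)$, $\tilde{\omega}(\alpha,\beta)$ recorded in \ref{parS04}. For example, for $\tr \tilde{\varphi}\tilde{\psi}$, multiplying out the two $2\times 2$ matrices and collecting the diagonal entries gives
\[
\tr \tilde{\varphi}\tilde{\psi}
 = -2\alpha^{2}-4\alpha - (\alpha+1)^{2}\!\left(\beta+\tfrac{1}{\beta}\right),
\]
and using $-2\alpha^{2}-4\alpha = 2-2(\alpha+1)^{2}$ together with $2+\beta+\beta^{-1} = (\beta+1)^{2}/\beta$ rewrites this as $2-(\alpha+1)^{2}(\beta+1)^{2}/\beta$. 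The other two traces are analogous; the computation of $\tr \tilde{\omega}\tilde{\varphi}$ and $\tr \tilde{\psi}\tilde{\omega}$ is slightly easier because $\tilde{\omega}$ is upper-triangular.

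The main obstacle here is purely bookkeeping: the bijectivity is essentially built in by design of $\tilde{p}_{0,4}$ and Lemma \ref{charactervariety}, so the only real work is the three trace computations, which are routine $2\times 2$ matrix multiplications followed by algebraic simplification. Once one is comfortable with the substitutions $2+\beta+\beta^{-1}=(\beta+1)^{2}/\beta$ and $-2\alpha^{2}-4\alpha = 2-2(\alpha+1)^{2}$, the desired closed form drops out immediately, with no appeal to any additional identity beyond $\det \tilde{\varphi}=\det \tilde{\psi}=\det \tilde{\omega}=1$.
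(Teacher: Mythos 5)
Your proposal is correct and follows exactly the route the paper intends (it states this as a corollary with no written proof precisely because the map factors as the homeomorphism $\pi\circ p_{0,4}$ followed by the bijection of Lemma \ref{charactervariety}, with the explicit formula obtained by direct matrix multiplication). Your trace computations check out against the matrices given in \ref{parS04}, so there is nothing to add.
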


 \subsection{Action of Mapping Class Group}
 \label{actionofmcg}
  \subsubsection{Action on Teichm\"{u}ller Space}
  We define $\Mod(\Sf)$, the mapping class group of $\Sf$ as the group of homotopy classes of orientation-preserving self-homeomorphism on $\Sf$. Remark that if $\Sf$ has punctures, an element of $\Mod(\Sf)$ may send a puncture to a different puncture. 
  
  $\Mod(\Sf)$ acts on $\mathcal{T}(\Sf)$ as follows:
  \[
  g[(\Sigma,f)] = [(\Sigma, f \circ g^{-1})]
  \]
  where $g \in \Mod (\Sf)$ and $[(\Sigma,f)] \in \mathcal{T}(\Sf)$. 
  
   $\Mod(\Sf)$ acts on  $R(\Sf)$ as follows:
  \[
  g\rho = \rho \circ {g_{\ast}}^{-1}
  \]
  where $\rho \in R(\Sf)$ and $g_{\ast}$ is an automorphism of $\pi _1 (\Sf)$ induced by $g \in \Mod(\Sf)$. 
  
  The action on $\tilde{R}(\Sf)$ is given similarly. 
  The action on $R(\Sf)/{\Pgl(2,\mathbb{R})}$ is given by
  \[
  g[\rho] = [g \rho]
  \]
  and on $X(\Sf)$ by
  \[
  g \chi_{\tilde{\rho}} = \chi_{g\tilde{\rho}}.
  \]
   

   Let $\tilde{p}$ denote $\tilde{p}_{1,1}$ or $\tilde{p}_{0,4}$ and $\Phi$ denote $\Phi_{1,1}$ or $\Phi_{0,4}$. We will compute the corresponding action on $\mathbb{R}_+^2$:
   \[
   g(\alpha,\beta)=\Phi(g\Phi^{-1}(\alpha,\beta)).
   \]
  Suppose that $(\alpha',\beta')=g(\alpha,\beta)$. Then 
   \[
   g\chi_{\tilde{p}(\alpha,\beta)}=\chi_{\tilde{p}(\alpha' ,\beta')}
   \]
   holds for each $g \in \Mod(\Sf)$. We should express $(\alpha',\beta')$ in terms of $\alpha$ and $\beta$.
   
  \subsubsection{$\Sf_{1,1}$}
  \label{actofS11}
  \begin{prop}
  $
  \Mod(\Sf_{1,1}) \approx \Sl (2,\mathbb{Z}).
   $
  \end{prop}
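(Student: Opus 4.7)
The plan is to identify $\Mod(\Sf_{1,1})$ with a subgroup of $\mathrm{Out}(\pi_1(\Sf_{1,1}))$ via the Dehn--Nielsen--Baer theorem, and then to pin down this subgroup as $\Sl(2,\mathbb{Z})$ through Nielsen's classical description of $\mathrm{Out}(F_2)$.

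First, since $\Sf_{1,1}$ deformation retracts onto a wedge of two circles, $\pi_1(\Sf_{1,1})$ is the free group $F_2 = \langle u, v \rangle$ on the generators from Section \ref{paramet}, and the conjugacy class of $c \coloneqq u^{-1}v^{-1}uv$ corresponds to the unique puncture. The Dehn--Nielsen--Baer theorem for punctured surfaces then gives an injection $\Mod(\Sf_{1,1}) \hookrightarrow \mathrm{Out}(F_2)$ whose image consists of those outer automorphisms that fix the conjugacy class of $c$ without inverting it, since every element of $\Mod(\Sf_{1,1})$ is orientation-preserving and thus preserves the orientation of the peripheral loop around the puncture.

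Next, I would invoke Nielsen's theorem that the abelianization map $\mathrm{Out}(F_2) \to \mathrm{GL}(2,\mathbb{Z})$ is an isomorphism, combined with the following key commutator identity: for $\sigma \in \mathrm{Aut}(F_2)$ with induced matrix $\bar\sigma \in \mathrm{GL}(2,\mathbb{Z})$ on $H_1(F_2) = \mathbb{Z}^2$, the element $\sigma(c)$ is conjugate in $F_2$ to $c^{\det \bar\sigma}$. Consequently, the subgroup of $\mathrm{Out}(F_2)$ preserving the conjugacy class of $c$ corresponds precisely to $\Sl(2,\mathbb{Z}) \subset \mathrm{GL}(2,\mathbb{Z})$. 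For an independent verification of surjectivity, one can exhibit Dehn twists about simple closed curves representing $u$ and $v$, whose actions on $H_1(\Sf_{1,1})$ realize two standard elementary generators of $\Sl(2,\mathbb{Z})$.

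The main obstacle is the commutator identity $\sigma(c) \sim c^{\det \bar\sigma}$, which is what forces the image to lie in $\Sl$ rather than in all of $\mathrm{GL}$. It reduces, via Nielsen's three generators of $\mathrm{Aut}(F_2)$, to direct checks: the transvection $u \mapsto uv,\, v \mapsto v$ (determinant $+1$) fixes $c$ on the nose; the inversion $u \mapsto u^{-1},\, v \mapsto v$ (determinant $-1$) sends $c$ to a conjugate of $c^{-1}$; and the swap $u \leftrightarrow v$ (determinant $-1$) sends $c$ to $v^{-1}u^{-1}vu = c^{-1}$.
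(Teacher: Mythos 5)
Your argument is correct, but it is a genuinely different route from the one the paper relies on: the paper's ``proof'' is only a citation to Farb--Margalit, where the isomorphism $\Mod(\Sf_{1,1}) \cong \Sl(2,\mathbb{Z})$ is obtained from the action on $H_1(\Sf_{1,1};\mathbb{Z}) \cong \mathbb{Z}^2$, with surjectivity realized by (restrictions of) linear homeomorphisms of the torus fixing the puncture and injectivity proved via the Alexander method applied to the simple closed curves on $\Sf_{1,1}$. You instead go through the Dehn--Nielsen--Baer theorem and Nielsen's isomorphism $\mathrm{Out}(F_2) \cong \mathrm{GL}(2,\mathbb{Z})$, cutting the image down to $\Sl(2,\mathbb{Z})$ by the identity $\sigma(c) \sim c^{\det\bar\sigma}$ for $c = u^{-1}v^{-1}uv$; this is a clean, purely algebraic identification (and the determinant criterion makes the orientation constraint transparent), at the cost of invoking two substantially deeper theorems than the homology argument needs. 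Your reduction of the commutator identity to Nielsen's three generators is valid, since the property is preserved under composition and a nontrivial element of a free group is never conjugate to its inverse, so $\det = -1$ genuinely obstructs preservation of $[c]$. One trivial slip: with the paper's convention $c = u^{-1}v^{-1}uv$, the transvection $u \mapsto uv$, $v \mapsto v$ sends $c$ to $v^{-1}cv$ rather than fixing it on the nose (it is fixed exactly only for the convention $[u,v]=uvu^{-1}v^{-1}$); this does not affect the argument, which only uses conjugacy classes.
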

  \begin{proof}
  See \cite{farbmarglit}.
  \end{proof}
  $\{\pm I\} \triangleleft \Sl (2,\mathbb{Z})$ corresponds to the subgroup of $\Mod(\Sf_{1,1})$ generated by the hyperelliptic involution on $\Sf_{1,1}$. This is the kernel of the action (see \cite{farbmarglit}). Therefore, we should consider the action of $\Mod(\Sf_{1,1})/{\{ \pm I \}} \approx \Psl(2,\mathbb{Z})$.
  
    For generators $u$, $v$ of $\pi _1 (\Sf_{1,1})$ as in \ref{paramet}, we can take Dehn twists around $u$ and $v$ as generators of $\Mod(\Sf_{1,1})/{\{ \pm I \}}$. We let $\sigma$ and $\tau$ denote Dehn twists around $v$ and $u$. We can suppose that 
    \[
    \sigma _{\ast}^{-1} (u)=uv,\  \sigma_{\ast}^{-1} (v)=v,\  \tau_{\ast}^{-1}(u)=u,\  \tau_{\ast}^{-1}(v)=uv
    \]
     where $\sigma_\ast$ and $\tau_\ast$ are automorphisms of $\pi_1(\Sf_{1,1})$ induced by $\sigma$ and $\tau$. Therefore for any $(\alpha,\beta) \in \mathbb{R}_+^2$, 
     \begin{align*}
      \sigma  \tilde{p}_{1,1}(\alpha,\beta) &= (\tilde{\varphi} \tilde{\psi}(\alpha,\beta) , \tilde{\psi}(\alpha,\beta)),\\
      \tau  \tilde{p}_{1,1}(\alpha,\beta) &= (\tilde{\varphi}(\alpha,\beta) ,\tilde{\varphi} \tilde{\psi}(\alpha,\beta))
      \end{align*}
      holds. Therefore, 
  \begin{align*}
  \sigma \chi_{\tilde{p}_{1,1}(\alpha,\beta)} &=\chi_{ \sigma  \tilde{p}_{1,1}(\alpha,\beta)}=(\tr \tilde{\varphi} \tilde{\psi}(\alpha,\beta) , \tr \tilde{\psi} (\alpha,\beta), \tr \tilde{\varphi} \tilde{\psi} ^2(\alpha,\beta)),\\
   \tau \chi_{\tilde{p}_{1,1}(\alpha,\beta)}&= \chi_{ \tau  \tilde{p}_{1,1}(\alpha,\beta)}=(\tr \tilde{\varphi}(\alpha,\beta), \tr \tilde{\varphi} \tilde{\psi}(\alpha,\beta), \tr \tilde{\varphi}  ^2\tilde{\psi}(\alpha,\beta)).
   \end{align*} 
   Suppose that $(\alpha',\beta')=\sigma(\alpha,\beta)$ and $(\alpha'',\beta'')=\tau(\alpha,\beta)$. Then
  \begin{align*}
  \sigma \chi_{\tilde{p}_{1,1}(\alpha ,\beta)}&=\chi_{\tilde{p}_{1,1}(\alpha ', \beta ')} ,\\
    \tau \chi_{\tilde{p}_{1,1}(\alpha ,\beta)}&=\chi_{\tilde{p}_{1,1}(\alpha '', \beta '')}
  \end{align*}
  holds. We should express $(\alpha',\beta')$ and $(\alpha'', \beta'')$ in terms of $\alpha$ and $\beta$.
  
  We get
  \[
   (\alpha ', \beta ')=\left(\frac{\alpha \beta}{\alpha ^2+1},\frac{\beta}{\alpha ^2+1}\right), \ (\alpha '',\beta '')=\left(\frac{\alpha}{\beta ^2+1},\frac{\alpha \beta}{\beta ^2+1}\right).
  \]
  Letting $S\coloneqq \tau^{-1} \sigma \tau^{-1}$ and $T\coloneqq \tau$, we get
  \[
   S  (\alpha, \beta) = \left(\frac{\beta}{\alpha ^{2} + \beta ^{2}}, \frac{\alpha}{\alpha ^{2} + \beta ^{2}}  \right),\ 
     T (\alpha, \beta) = \left(\frac{\alpha}{1 + \beta ^{2}}, \frac{\alpha \beta}{1 + \beta ^{2}}  \right).
  \]
  Then $S^{2} = (ST)^{3} = \mathrm{id}$.
  
    Apply a coordinate transformation $t\colon (\alpha , \beta) \mapsto (-\log \alpha \beta, \sqrt{3} \log \alpha / \beta) \eqqcolon (A,B)$. Then $t(\mathbb{R}_+^2)=\mathbb{R}^2$. $\Mod(\Sf_{1,1})$ acts on $\mathbb{R}^2$ as follows:
  \begin{align*}
  S (A, B) &=\left(-A -\frac{2}{ \sqrt{3}} B +2 \log \left(  1+ e^{2B /\sqrt{3}} \right), -B \right),\\
  T (A, B)&=\left( -\frac{1}{2} A - \frac{5\sqrt{3}}{6}B +2\log \left( e^A+e^{B/\sqrt{3}} \right), \frac{\sqrt{3}}{2} A+ \frac{1}{2} B \right),\\
  ST  (A ,B) &= \left(  -\frac{1}{2} A +\frac{\sqrt{3}}{2} B,- \frac{\sqrt{3}}{2} A -\frac{1}{2} B \right).
  \end{align*}
  $ST$ acts as $-2\pi /3$ rotation around the origin.
  
  We define
  \[
  \Phi_{1,1}'\coloneqq t\circ \Phi_{1,1} \colon \mathcal{T}(\Sf_{1,1})\rightarrow \mathbb{R}^2
  \]
  as an alternate global coordinate of $\mathcal{T}(\Sf_{1,1})$.

  \subsubsection{$\Sf_{0,4}$}
  \label{actofS04}
   \begin{prop}
  $
  \Mod(\Sf_{0,4}) \approx \Psl (2, \mathbb{Z}) \ltimes ( \mathbb{Z}/{2\mathbb{Z}} \times \mathbb{Z}/{2\mathbb{Z}} ).
   $
  \end{prop}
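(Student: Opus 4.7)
My plan is to separate $\Mod(\Sf_{0,4})$ into two pieces: a normal copy of the Klein four-group coming from orientation-preserving involutions that permute the four punctures in pairs, and a complementary copy of $\Psl(2,\mathbb{Z})$ coming from the pure mapping class group together with one puncture-transposition. Concretely, I would first set up the exact sequence
\[
1 \to \mathrm{PMod}(\Sf_{0,4}) \to \Mod(\Sf_{0,4}) \to S_4 \to 1,
\]
where $\mathrm{PMod}(\Sf_{0,4})$ is the subgroup fixing each puncture and the quotient acts by permuting the four punctures. Verifying that every element of $S_4$ is realized by some orientation-preserving homeomorphism of $\Sf_{0,4}$ is straightforward using the $S_4$-symmetric model of $\Sf_{0,4}$ as the complement of the vertices of a regular tetrahedron inscribed in $S^2$.

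Next, applying the Birman exact sequence to forgetting one puncture, $1\to \pi_1(\Sf_{0,3})\to \mathrm{PMod}(\Sf_{0,4})\to \mathrm{PMod}(\Sf_{0,3})\to 1$, together with $\mathrm{PMod}(\Sf_{0,3})=1$ and $\pi_1(\Sf_{0,3})\cong F_2$, identifies $\mathrm{PMod}(\Sf_{0,4})\cong F_2$; explicit free generators can be taken to be Dehn twists around two of the three essential simple closed curves separating the punctures into pairs. I would then isolate the three commuting orientation-preserving involutions $\iota_{12,34}$, $\iota_{13,24}$, $\iota_{14,23}$ that each swap two pairs of punctures; together with the identity they form a subgroup $V \cong \mathbb{Z}/2\mathbb{Z}\times\mathbb{Z}/2\mathbb{Z}$ in $\Mod(\Sf_{0,4})$ which projects isomorphically onto the unique normal Klein four-subgroup of $S_4$. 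The key technical point here is that for a given double transposition there is a \emph{unique} mapping class of finite order realizing it (equivalently, each $\iota$ is a genuine lift, not just a lift up to $\mathrm{PMod}$); this can be seen from the classification of finite-order elements in $\Mod(\Sf_{0,4})$, or more concretely by realizing each $\iota$ as a hyperbolic isometry on a symmetric point of $\mathcal{T}(\Sf_{0,4})$ and using uniqueness of the Nielsen realization.

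Once $V\triangleleft \Mod(\Sf_{0,4})$ is established, I would compute the quotient $\Mod(\Sf_{0,4})/V$. It fits in a short exact sequence $1\to \mathrm{PMod}(\Sf_{0,4}) \to \Mod(\Sf_{0,4})/V \to S_4/V \to 1$ with $\mathrm{PMod}(\Sf_{0,4})\cong F_2$ and $S_4/V\cong S_3$. To identify this extension with $\Psl(2,\mathbb{Z})$, I would exhibit an explicit order-$2$ half-twist and an order-$3$ rotation in $\Mod(\Sf_{0,4})$ descending to generators $S$ and $ST$ of $\Psl(2,\mathbb{Z})\cong\mathbb{Z}/2*\mathbb{Z}/3$, and check on the Teichm\"uller-space coordinates just introduced that the induced actions of the candidate $S,ST$ agree, up to $V$, with the order-$2$ and order-$3$ actions on $\mathbb{R}^2_+$ (exactly as was done for $\Sf_{1,1}$ in \ref{actofS11}). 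The image of $\mathrm{PMod}(\Sf_{0,4})\cong F_2$ then matches $\Gamma(2)\triangleleft \Psl(2,\mathbb{Z})$, which is forced because $\Gamma(2)$ is the unique normal free subgroup of index $6$. Finally, the same explicit lifts provide a splitting $\Psl(2,\mathbb{Z})\hookrightarrow \Mod(\Sf_{0,4})$, yielding the semidirect product structure.

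The main obstacle is the middle step: showing that $V$ is a genuine subgroup and is normal, i.e., that the puncture-swap involutions can be chosen consistently of order $2$ and that conjugation by any mapping class permutes the three $\iota$'s. This is the subtle ``lifting'' issue, because a priori the Birman--Hilden-type argument only gives $V$ up to elements of $\mathrm{PMod}$; everything else is either formal (splice sequences together) or a finite computation on generators in the coordinates of Section~\ref{parS04}.
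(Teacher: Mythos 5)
The paper itself offers no argument here: its ``proof'' is a citation to \cite{farbmarglit}, where the isomorphism is obtained from the hyperelliptic double cover $T^2\to S^2$ branched over the four marked points. Your route is genuinely different --- an extension-theoretic assembly from the puncture-permutation sequence and the Birman exact sequence --- and much of it is sound: the surjection onto $S_4$ via the tetrahedral model, the identification $\mathrm{PMod}(\Sf_{0,4})\cong F_2$ by Dehn twists about two of the three pair-separating curves, and the realization of the Klein four-group by the three edge-axis rotations of the tetrahedron are all correct.

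There is, however, a genuine gap at exactly the step you flag, and the tools you invoke there do not close it. Nielsen realization gives existence of a finite-order representative and uniqueness of the realizing \emph{subgroup} up to conjugacy; it does not show that an order-two mapping class inducing $(12)(34)$ \emph{equals} $\iota_{12,34}$, nor that $f\iota f^{-1}\in V$ for arbitrary $f$. Group theory alone cannot force this: an extension such as $D_\infty=\mathbb{Z}\rtimes\mathbb{Z}/2\mathbb{Z}$ has infinitely many involutions lying over a single nontrivial coset, so ``unique finite-order lift'' is a topological fact needing a topological proof. The ingredient that actually does the work --- and which is precisely what the branched torus cover supplies --- is that each $\iota$ fixes every isotopy class of essential simple closed curve (curves correspond to slopes on $T^2$, and the half-lattice translations preserve slopes). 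From this, $\iota$ commutes with every Dehn twist, hence centralizes $\mathrm{PMod}(\Sf_{0,4})\cong F_2$; then any involution $f=p\iota$ with $p\in\mathrm{PMod}(\Sf_{0,4})$ satisfies $p^2=1$ in $F_2$, so $p=1$, which yields both your uniqueness claim and the normality of $V$ in one stroke. A second, smaller gap: to identify $\Mod(\Sf_{0,4})/V$ with $\Psl(2,\mathbb{Z})$ you need not just generators of orders $2$ and $3$ but the absence of further relations; the computation $S^2=(ST)^3=\mathrm{id}$ in \ref{actofS04} only shows the image is a quotient of $\mathbb{Z}/2\mathbb{Z}\ast\mathbb{Z}/3\mathbb{Z}$, so a faithfulness argument (ping-pong on the Farey tessellation, or again the torus cover, which also hands you the splitting $\Psl(2,\mathbb{Z})\hookrightarrow\Mod(\Sf_{0,4})$ by descending linear maps) must still be supplied. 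With these two points repaired your outline becomes a complete proof.
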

  \begin{proof}
  See \cite{farbmarglit}.
  \end{proof}
  $ \mathbb{Z}/{2\mathbb{Z}} \times \mathbb{Z}/{2\mathbb{Z}} \triangleleft \Psl (2, \mathbb{Z}) \ltimes ( \mathbb{Z}/{2\mathbb{Z}} \times \mathbb{Z}/{2\mathbb{Z}} )$ corresponds to the subgroup of $\Mod(\Sf_{0,4})$ generated by two hyperelliptic involutions. This is the kernel of the action (see \cite{farbmarglit}). Therefore we should consider the action of $\Mod(\Sf_{0,4})/{( \mathbb{Z}/{2\mathbb{Z}} \times \mathbb{Z}/{2\mathbb{Z}} )} \approx \Psl(2,\mathbb{Z})$.
  
  For generators $u,v,w$ of $\pi _1 (\Sf_{0,4})$ as in \ref{parS04} (see Figure \ref{generators}), we can take half-twists around $wv$ and $vu$ as generators of $\Mod(\Sf_{0,4})/{ ( \mathbb{Z}/{2\mathbb{Z}} \times \mathbb{Z}/{2\mathbb{Z}} )}$. We let $\sigma$ and $\tau$ denote half-twists around $vu$ and $wv$. We can suppose that
   \begin{align*}
  \sigma _{\ast}^{-1} (u)&=u,\  \sigma_{\ast}^{-1} (v)=v,\  \sigma_{\ast}^{-1} (w)=(wvu)^{-1},\\
    \tau_{\ast}^{-1}(u)&=(wvu)^{-1},\  \tau_{\ast}^{-1}(v)=v,\  \tau_{\ast}^{-1}(w)=w
  \end{align*}
   where $\sigma_\ast$ and $\tau_\ast$ are automorphisms of $\pi_1(\Sf_{0,4})$ induced by $\sigma$, $\tau$ (see Figure \ref{dehntwist}). 
  Therefore for any $(\alpha,\beta) \in \mathbb{R}_+^2$,
  \begin{align*}
  \sigma  \tilde{p}_{0,4}(\alpha,\beta) &= (\tilde{\varphi}(\alpha,\beta) ,\tilde{\psi}(\alpha,\beta) ,(\tilde{\omega} \tilde{\psi} \tilde{\varphi})^{-1}(\alpha,\beta)),\\
   \tau  \tilde{p}_{0,4}(\alpha,\beta) &= ((\tilde{\omega} \tilde{\psi} \tilde{\varphi})^{-1}(\alpha,\beta) , \tilde{\psi}(\alpha,\beta) , \tilde{\omega}(\alpha,\beta))
  \end{align*}
  holds. Therefore, 
  \begin{align*}
  \sigma \chi_{\tilde{p}_{0,4}(\alpha,\beta)}&=\chi_{ \sigma  \tilde{p}_{0,4}(\alpha,\beta)}=(\tr \tilde{\varphi} \tilde{\psi}(\alpha,\beta) , \tr \tilde{\psi} (\tilde{\omega} \tilde{\psi} \tilde{\varphi})^{-1}(\alpha,\beta) ,\tr \tilde{\omega} \tilde{\psi}(\alpha,\beta)), \\
  \tau \chi_{  \tilde{p}_{0,4}(\alpha,\beta)}&=\chi_{ \tau  \tilde{p}_{0,4}(\alpha,\beta)}=(\tr \psi (\tilde{\omega} \tilde{\psi} \tilde{\varphi})^{-1}(\alpha,\beta) , \tr \tilde{\psi} \tilde{\omega(\alpha,\beta)}, \tr \tilde{\varphi} \tilde{\psi}(\alpha,\beta)).
  \end{align*}
  Suppose that $(\alpha ', \beta ')=\sigma  (\alpha ,\beta)$ and $(\alpha '', \beta '')=\tau  (\alpha ,\beta)$. Then
  \begin{align*}
  \sigma \chi_{ \tilde{p}_{0,4}(\alpha ,\beta)}&=\chi_{ \tilde{p}_{0,4}(\alpha ', \beta ')} ,\\
    \tau \chi_{ \tilde{p}_{0,4}(\alpha ,\beta)}&=\chi_{ \tilde{p}_{0,4}(\alpha '', \beta '')}
  \end{align*}
  holds. We should express $(\alpha',\beta')$ and $(\alpha'',\beta'')$ in terms of $\alpha$ and $\beta$.
  
  We get
  \begin{align*}
   (\alpha ', \beta ')&=\left(\frac{\alpha\beta}{{{\alpha}^{2}}\beta+{{\alpha}^{2}}+2\alpha+1},\frac{\beta}{{{\left( \alpha\beta+\alpha+1\right) }^{2}}} \right),\\
   (\alpha '',\beta '')&=\left( \frac{\alpha\beta+\alpha+1}{\beta},\alpha\beta+\alpha \right).
  \end{align*}
  Letting $S\coloneqq \sigma \tau^{-2}$ and $T\coloneqq \tau $, we get 
  \[
   S  (\alpha, \beta) = \left(\frac{1}{\alpha},\frac{1}{\beta } \right),\ 
     T (\alpha, \beta) = \left( \frac{\alpha\beta+\alpha+1}{\beta},\alpha\beta+\alpha \right).
  \]
  Then $S^{2} = (ST)^{3} = \mathrm{id}$.
  
  \begin{figure}[h]
  \centering
  \includegraphics[width=7cm]{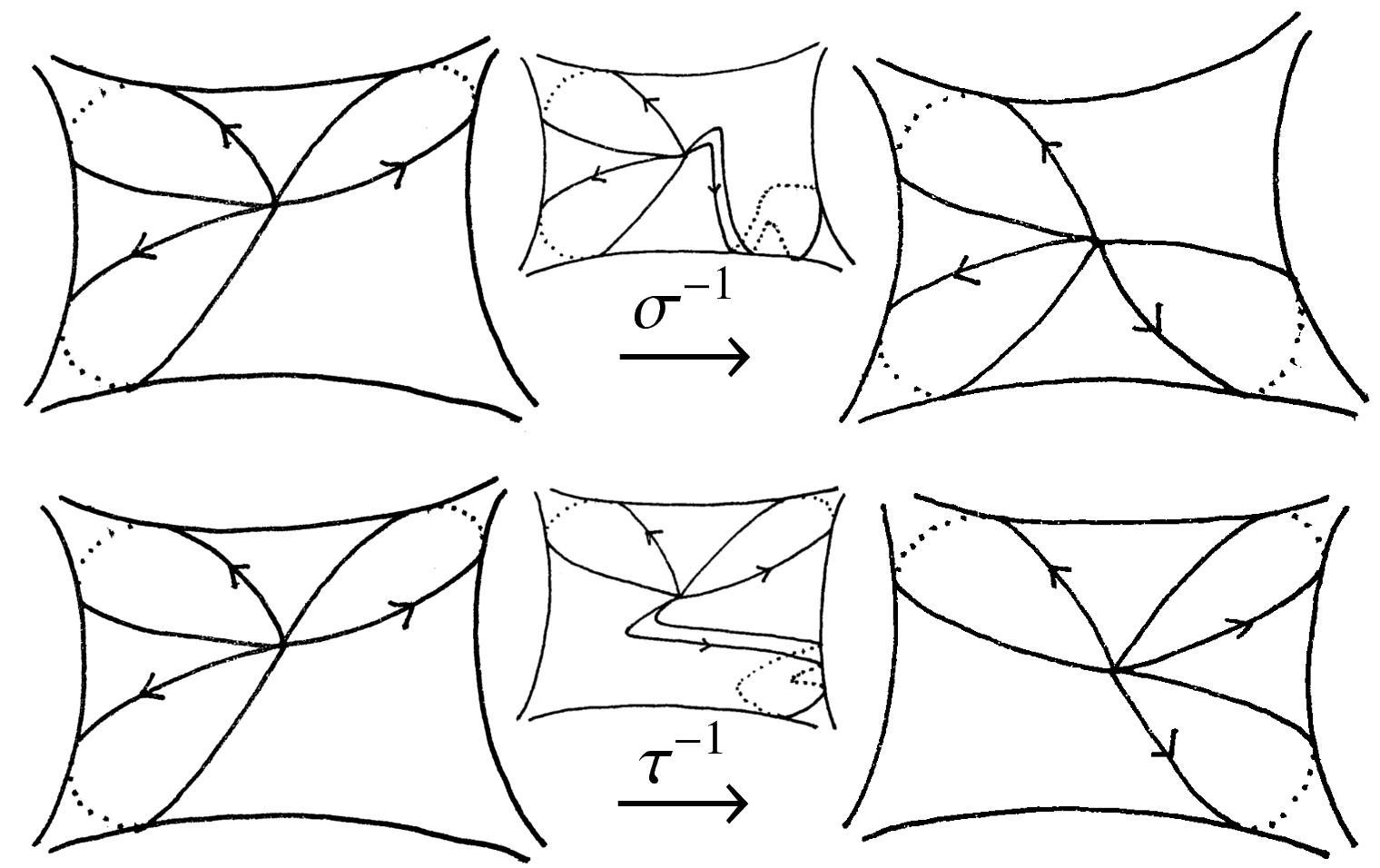}
  \caption{$\Mod(\Sf_{0,4})$.}
  \label{dehntwist}
\end{figure}
  
  Apply a coordinate transformation $t\colon (\alpha , \beta) \mapsto (\log (\alpha), \log (-\beta)) \eqqcolon (A,B)$. Then $t(\mathbb{R}_+^2) =\mathbb{R}^2$ and $\Mod(\Sf_{0,4})$ acts on $\mathbb{R}^2$ as follows:
  \[
  S (A, B) =(-A ,-B),\ 
  T (A, B)=\left( \log \left( e^A +e^{A -B} +e^{-B} \right), \log \left( e^{A +B}+e^A \right) \right).
  \]
  $S$ acts as $\pi$ rotation around the origin.
  
  We define 
  \[
  \Phi_{0,4}' \coloneqq t\circ \Phi_{0,4} \colon \mathcal{T}(\Sf_{0,4}) \rightarrow \mathbb{R}^2
  \]
  as an alternate global coordinate of $\mathcal{T}(\Sf_{0,4})$.

\section{Number of Systoles}
\label{numberofsystole}
 \subsection{Simple Closed Geodesics}
 \subsubsection{Length of Simple Closed Geodesics}
 A simple closed curves on a surface is called \textit{essential} if it is not homotopic to a point or a puncture. There is a bijective correspondence between free homotopy classes of essential curves on a surface $\Sf$ and simple closed geodesics of each hyperbolic surface homeomorphic to $\Sf$. Let $[(\Sigma,h)]=[\rho] \in \mathcal{T}(\Sf)$ and $\gamma$ be an element of $\pi_1(\Sf)$ represented by an essential curve $\tilde{\gamma}$. Then there is the unique simple closed geodesic of $\Sigma$ homotopic to $h(\tilde{\gamma})$ and its length is given by $2 \cosh ^{-1}(|\tr \tilde{\rho} (\gamma)/2|)$. Since the function $x \mapsto 2 \cosh ^{-1}(x)$ is monotone increasing for $x\geq1$, we should consider $|\tr \tilde{\rho}(\gamma)|$. 
For $\gamma \in \pi_1(\Sf)$, we define the map $\Tr_\gamma \colon \mathcal{T}(\Sf) \rightarrow \mathbb{R}_+ $ by $ [\rho] \mapsto |\tr \tilde{\rho} (\gamma)|$ and the set $\mathcal{C}$ by
 \[
 \mathcal{C}= \{\Tr_\gamma \mid \gamma \text{ is represented by an essential curve} \}.
 \]
  There is a natural bijective correspondence between $\mathcal{C}$ and free homotopy classes of essential curves on $\Sf$. 
  Remark that $\Tr_\gamma(m)>2$ for any $\Tr_\gamma \in \mathcal{C}$ and for any $m \in \mathcal{T}(\Sf)$.
 
  For $g\in \Mod(\Sf)$ and $\Tr_\gamma  \in \mathcal{C}$, we let 
  \[
  g \Tr_\gamma \coloneqq \Tr_{g^{-1}_\ast ( \gamma)} 
  \]
   where $g_\ast$ is the automorphism of $\pi_1(\Sf)$ induced by $g$. Then for any $m \in \mathcal{T}(\Sf)$ and for any $\Tr_\gamma \in \mathcal{C}$,
   \[
   g  \Tr_\gamma (m)=\Tr_\gamma (g  m)
   \]
 holds. 
 
 We let $C_m\coloneq \{ \Tr_\gamma(m) \mid \Tr_\gamma \in \mathcal{C} \}$. Suppose that $\gamma \in \pi_1(\Sf)$ is represented by an essential curve $\tilde{\gamma}$. For $[(\Sigma,h)] \in \mathcal{T}(\Sf)$, if $\Tr_\gamma([(\Sigma,h)] )=\min C_{[(\Sigma,h)]} $, then the simple closed geodesic homotopic to $h(\tilde{\gamma})$ is a systole of $\Sigma$.
 \begin{defn*}
 For $\Tr_\gamma \in \mathcal{C}$ and $U\subset \mathcal{T}(\Sf)$, $\Tr_\gamma$ is \textit{minimal} in $U$ if $\Tr_\gamma(m)= \min C_m$ for all $m\in U$.
 \end{defn*}

   Let $u,v$ and $u,v,w$ be generators of $\pi_1(\Sf_{1,1})$ and $\pi_1(\Sf_{0,4})$ as in \ref{parS11} and \ref{parS04}.
   We define $x_0, y_0, z_0 \in \mathcal{C}$ as follows:
   
   In the case of $\Sf_{1,1}$,
   \[
   x_0 \coloneqq \Tr_u, \ y_0 \coloneqq \Tr_v, \  z_0 \coloneqq \Tr_{uv}.
   \]
   
   In the case of $\Sf_{0,4}$,
   \[
   x_0 \coloneqq \Tr_{vw}, \ y_0 \coloneqq \Tr_{wu}, \ z_0 \coloneqq \Tr_{uv}.
   \]
   
   Taking $\tilde{\varphi}, \tilde{\psi}, \tilde{\omega}$ as in \ref{parS11} or \ref{parS04}, we can also write as follows:
   
   In the case of $\Sf_{1,1}$,
     
  
   \begin{align*}
   x_0(m)&=|\tr \tilde{\varphi} (\alpha,\beta)|\\
   &=\left|\frac{\alpha^2+\beta^2+1}{\alpha}\right| \\
   &=\tr \tilde{\varphi} (\alpha,\beta),\\
   y_0(m)&=|\tr \tilde{\psi}(\alpha,\beta)| \\
   &=\left|  \frac{\alpha^2+\beta^2+1}{\beta}  \right| \\
   &=\tr \tilde{\psi}(\alpha,\beta),\\
   z_0(m)&=| \tr \tilde{\varphi}\tilde{\psi}(\alpha,\beta)|\\
   &=\left|\frac{\alpha^2+\beta^2+1}{\alpha \beta}\right|\\
   &=\tr \tilde{\varphi}\tilde{\psi}(\alpha,\beta)
   \end{align*}
   where $m \in \mathcal{T}(\Sf_{1,1})$ and $\Phi_{1,1}(m)=(\alpha,\beta) (\in \mathbb{R}_+^2)$.

   
   In the case of $\Sf_{0,4}$,
 \begin{align*}
   x_0(m)&=|\tr \tilde{\psi} \tilde{\omega}(\alpha,\beta)|\\
   &=\left| 2-\frac{(\alpha+1)^2 ( \beta+1) }{\alpha \beta} \right|\\
   &=-\tr \tilde{\psi} \tilde{\omega}(\alpha,\beta),\\
   y_0(m)&=|\tr \tilde{\omega} \tilde{\varphi}(\alpha, \beta)| \\
   &=\left|  2-\frac{( \alpha+1) ^2 ( \beta+1) }{\alpha}  \right| \\
   &=-\tr \tilde{\omega} \tilde{\varphi}(\alpha, \beta),\\
    z_0(m)&=|\tr \tilde{\varphi} \tilde{\psi} (\alpha, \beta)|\\
    &=\left| 2-\frac{( \alpha+1)^2 (\beta+1)^2}{\beta}\right|\\
    &=-\tr \tilde{\varphi}\tilde{\psi}(\alpha,\beta)
   \end{align*}
   where $m \in \mathcal{T}(\Sf_{0,4})$ and $\Phi_{0,4}(m)=(\alpha,\beta) (\in \mathbb{R}_+^2)$.
   
   \subsubsection{3-Regular Tree and Simple Closed Geodesics}
From now on, we let $\Sf$ denote $\Sf_{1,1}$ or $\Sf_{0,4}$. 
  Consider a 3-regular tree properly embedded in the plane. A \textit{complementary region} is the closure of a connected component of the complement of the tree. Let $\Omega$ denote the set of complementary regions. There is a bijective correspondence between $\Omega$ and free homotopy classes of essential curves on $\Sf$ (see \cite{bowditch1, bowditch2}), and hence between $\Omega$ and $\mathcal{C}$.
  \begin{defn*}
  For $X,Y,Z,W \in \Omega$, $(X,Y ; Z,W)$ indicates that $X,Y,Z,W$ are distinct and $X \cap Y \cap Z \not= \emptyset$ and $X \cap Y \cap W \not= \emptyset$ (see Figure \ref{tritree}).
  \end{defn*}
  
  \begin{figure}[h]
  \centering
  \includegraphics[width=3cm]{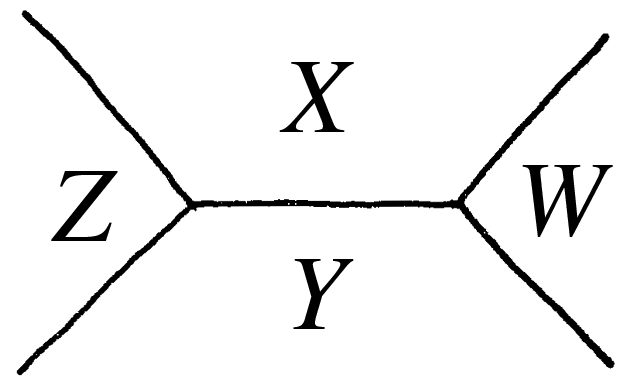}
  \caption{$(X,Y;Z,W)$.}
  \label{tritree}
\end{figure}

   We define the map $\phi \colon \Omega \rightarrow \mathcal{C}$ inductively as follows: 
  \begin{enumerate}
   \item 
   Take $X_0,Y_0,Z_0 \in \Omega$ distinctly so that $X_0\cap Y_0 \cap Z_0 \not=\emptyset$ and fix them.
   \item
   Let $\phi(X_0)=x_0, \phi(Y_0)=y_0, \phi(Z_0)=z_0$.
   \item
   For $X,Y,Z,W \in \Omega$ such that $(X,Y ; Z,W)$, let $\phi(W)=\phi(X) \phi(Y)- \phi(Z)$ in the case of $\Sf_{1,1}$, $\phi(W)=\phi(X) \phi(Y)- \phi(Z)-8$ in the case of $\Sf_{0,4}$.
  \end{enumerate}
  
  We will see the well-definedness and the bijectivity of $\phi$ in Corollary \ref{bij}.
  
  We take $W_0 \in \Omega$ so that $(X_0,Y_0;Z_0,W_0)$. We sometimes write $\phi(\ast)$ by the small letter of $\ast$ (namely $\phi(X) = x,\  \phi(Y) = y,\  \phi(Z) = z,\  \phi(W) = w, \cdots$). 
  
  \begin{lem}
  \label{markoff1}
  For all $X,Y,Z\in \Omega$ such that $X \cap Y \cap Z \not= \emptyset$, in the case of $\Sf_{1,1}$, 
  \[
  x^2+y^2+z^2-xyz=0,
  \]
  and in the case of $\Sf_{0,4}$, 
  \[
  x^2+y^2+z^2-xyz+8(x+y+z)+28=0.
  \]
  \end{lem}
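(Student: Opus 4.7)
The plan is to prove both identities by induction along the 3-regular tree, rooted at the base vertex $v_0 = X_0 \cap Y_0 \cap Z_0$. At each vertex of the tree, three regions of $\Omega$ meet, forming a triple; I want to show the stated identity in $(x, y, z) = (\phi(X), \phi(Y), \phi(Z))$ holds at every vertex.

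For the base case, I would verify the identity at $(X_0, Y_0, Z_0)$ using the explicit formulas from \ref{parS11} and \ref{parS04}. In the $\Sf_{1,1}$ case, with $\Phi_{1,1}(m) = (\alpha, \beta)$,
\[
x_0^2 + y_0^2 + z_0^2 = (\alpha^2 + \beta^2 + 1)^2 \Bigl(\tfrac{1}{\alpha^2} + \tfrac{1}{\beta^2} + \tfrac{1}{\alpha^2 \beta^2}\Bigr) = \frac{(\alpha^2 + \beta^2 + 1)^3}{\alpha^2 \beta^2} = x_0 y_0 z_0.
\]
Equivalently, this is the Fricke identity $(\tr A)^2 + (\tr B)^2 + (\tr AB)^2 - (\tr A)(\tr B)(\tr AB) = 2 + \tr[A, B]$ applied with $A = \tilde{\varphi}$, $B = \tilde{\psi}$, noting that the commutator represents the puncture and so has trace $-2$. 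For $\Sf_{0,4}$, a parallel substitution of the formulas for $x_0, y_0, z_0$ into the identity, followed by a bulkier polynomial simplification, produces the additional terms $8(x+y+z) + 28$; these ultimately come from the four generators $u, v, w, (wvu)^{-1}$ of $\pi_1(\Sf_{0,4})$ all being parabolic with trace $-2$.

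For the inductive step, suppose the identity holds at the triple $(X, Y, Z)$, and let $W$ be the fourth region with $(X, Y; Z, W)$. By definition $w = xy - z$ for $\Sf_{1,1}$, so
\begin{align*}
x^2 + y^2 + w^2 - xyw &= x^2 + y^2 + (xy - z)^2 - xy(xy - z)\\
&= x^2 + y^2 + z^2 - xyz = 0.
\end{align*}
For $\Sf_{0,4}$ one has $w = xy - z - 8$, and collecting terms in the analogous expansion yields
\[
x^2 + y^2 + w^2 - xyw + 8(x + y + w) + 28 = x^2 + y^2 + z^2 - xyz + 8(x + y + z) + 28 = 0.
\]
Since both identities are symmetric in $(x, y, z)$, the same argument applies to edge moves that replace $X$ or $Y$ rather than $Z$; so the identity propagates from $v_0$ to every vertex of the (connected) tree, and hence to every triple in $\Omega$.

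The main obstacle is the $\Sf_{0,4}$ base case, where the constants $8$ and $28$ must emerge exactly from a polynomial expansion; one has to reconcile the negative sign in $x_0 = -\tr(\tilde{\psi}\tilde{\omega})$, etc., with the squarings and triple product in the target identity. The inductive step, by contrast, is a three-line algebraic check once the recursion $w = xy - z$ (resp.\ $xy - z - 8$) is taken as input from the definition of $\phi$.
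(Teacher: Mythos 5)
Your proposal is correct and follows essentially the same route as the paper: an induction along the tree whose inductive step is exactly the substitution $w = xy - z$ (resp.\ $w = xy - z - 8$) into the quadratic form. The only difference is that you actually carry out the base-case verification at $(X_0, Y_0, Z_0)$ (and flag the symmetry needed to propagate in all directions), both of which the paper's proof leaves implicit.
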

  \begin{proof}
  We prove by induction. $X_0,Y_0,Z_0$ satisfy the lemma. Suppose that $X,Y,Z\in \Omega$ such that $X \cap Y \cap Z \not= \emptyset$ satisfy the lemma. Take $W\in \Omega$ so that $(X,Y;Z,W)$. We should show that $X,Y,W$ satisfy the lemma. In the case of $\Sf_{1,1}$,
  \begin{align*}
  x^2+y^2+w^2-xyw&=x^2+y^2+(xy-z)^2-xy(xy-z)\\
  &=x^2+y^2+z^2-xyz\\
  &=0.
  \end{align*}
  In the case of $\Sf_{0,4}$,
  \begin{align*}
  &x^2+y^2+w^2-xyw+8(x+y+w)+28\\
  =\quad& x^2+y^2+(xy-z-8)^2-xy(xy-z-8)+8(x+y+(xy-z-8))+28\\
  =\quad& x^2+y^2+z^2-xyz+8(x+y+z)+28\\
  =\quad& 0.
  \end{align*}
  \end{proof}
  
    \begin{lem}
    
  For all $m\in \mathcal{T}(\Sf)$ and for all $X,Y,Z\in \Omega$ such that $X \cap Y \cap Z \not= \emptyset$, 
  \[
  x(m),y(m),z(m)>0.
  \]
  \end{lem}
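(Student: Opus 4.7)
The plan is to prove the statement by induction on combinatorial distance from the base triple $(X_0, Y_0, Z_0)$ in the 3-regular tree, using the Markoff-type identities of Lemma \ref{markoff1} as the engine. Recall that triples $(X,Y,Z)$ with $X \cap Y \cap Z \neq \emptyset$ correspond bijectively to vertices of the tree, and moving to an adjacent vertex corresponds to a flip $(X,Y;Z,W)$; since the tree is connected, every such triple is reachable from $(X_0, Y_0, Z_0)$ by finitely many flips.

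For the base case, the explicit trace formulas written out in Sections \ref{parS11} and \ref{parS04} make positivity of $x_0(m), y_0(m), z_0(m)$ a direct computation. In the $\Sf_{1,1}$ case each value has the form $(\alpha^2+\beta^2+1)/(\alpha^i \beta^j)$ with $\alpha, \beta > 0$, hence positive. In the $\Sf_{0,4}$ case each value has the form $(\alpha+1)^a (\beta+1)^b / (\alpha^i \beta^j) - 2$, which exceeds $2$ by AM--GM (using $(\alpha+1)^2 \ge 4\alpha$ together with $\beta+1 > \beta$).

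For the inductive step, I would assume $x(m), y(m), z(m) > 0$ for the triple $(X,Y,Z)$ and let $W$ satisfy $(X,Y;Z,W)$. By the recursion defining $\phi$, $w = xy - z$ in the $\Sf_{1,1}$ case and $w = xy - z - 8$ in the $\Sf_{0,4}$ case. Rearranging the identity of Lemma \ref{markoff1} applied to the positive triple $(X,Y,Z)$ yields $z \cdot w = x^2 + y^2$ in the $\Sf_{1,1}$ case and $z \cdot w = x^2 + y^2 + 8(x+y) + 28$ in the $\Sf_{0,4}$ case. Both right-hand sides are strictly positive, so together with $z > 0$ this forces $w > 0$. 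The same argument applies after relabeling if the flip replaces $X$ or $Y$ rather than $Z$.

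The main point to double-check is that no flip leaves the class of triples with nonempty triple intersection, but this is immediate from the definition of $(X,Y;Z,W)$, which already includes $X \cap Y \cap W \neq \emptyset$. Beyond that I do not anticipate any serious obstacle: the Markoff identity does all the real work, and the argument is purely algebraic once the inductive framework is in place.
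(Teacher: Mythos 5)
Your proof is correct and follows essentially the same route as the paper: an induction over the vertices of the 3-regular tree in which the Markoff identity of Lemma \ref{markoff1} supplies the inductive step. The only (cosmetic) difference is that you argue directly, via $zw = x^2+y^2$ (resp. $zw = x^2+y^2+8(x+y)+28$), where the paper argues by contradiction, assuming the new trace is $\leq 0$ and showing the Markoff expression would then be strictly positive.
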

  \begin{proof}
  We prove by induction. $X_0,Y_0,Z_0$ satisfy the lemma. Take $X,Y,Z\in \Omega$ so that $X \cap Y \cap Z \not= \emptyset$. We should show that if $x(m),y(m)>0$, then $z(m)>0$. 
  
  Suppose that $x(m),y(m)>0$ and $z(m) \leq 0$.
  
   In the case of $\Sf_{1,1}$, 
   \[
   x(m)^2+y(m)^2+z(m)^2-x(m)y(m)z(m)>0.
   \]
   This is a contradiction. 
  
  In the case of $\Sf_{0,4}$, 
  \begin{align*}
  &x(m)^2+y(m)^2+z(m)^2-x(m)y(m)z(m)+8(x(m)+y(m)+z(m))+28\\
  &=(x(m)+4)^2+(y(m)+4)^2+(z(m)+4)^2-x(m)y(m)z(m)-20\\
  &>12+(z(m)+4)^2-x(m)y(m)z(m)\\
  &>0.
  \end{align*}
   This is a contradiction.
  \end{proof}
    
  \begin{lem}
  \label{actOnC}
  The following holds:
\[
(S x_0,S y_0,S z_0)=(y_0,x_0,w_0),\  
(ST x_0, ST y_0, ST z_0)=(z_0,x_0,y_0).
\]
\end{lem}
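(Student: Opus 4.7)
The plan is to use the defining relation $g\Tr_\gamma = \Tr_{g_\ast^{-1}(\gamma)}$ together with three trace-algebraic tools: conjugacy invariance of the trace, the $\Sl(2,\mathbb{R})$ identity $\tr M = \tr M^{-1}$, and the Fricke identity $\tr AB + \tr AB^{-1} = \tr A \tr B$. For each of $S\,x_0, S\,y_0, S\,z_0, ST\,x_0, ST\,y_0, ST\,z_0$, the strategy is first to compute $S_\ast^{-1}$ or $(ST)_\ast^{-1}$ applied to the relevant word in $\pi_1(\Sf)$ using the explicit formulas for $\sigma_\ast^{-1}, \tau_\ast^{-1}$ recorded in \ref{actofS11} and \ref{actofS04}, and then to recognize the resulting trace as $x_0, y_0, z_0$, or $w_0$.

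For $\Sf_{1,1}$: From the given formulas I derive $\sigma_\ast(u) = uv^{-1}$, $\sigma_\ast(v) = v$, $\tau_\ast(u) = u$, $\tau_\ast(v) = u^{-1}v$. Then $S_\ast^{-1} = \tau_\ast \sigma_\ast^{-1} \tau_\ast$ yields $S_\ast^{-1}(u) = v$ and $S_\ast^{-1}(v) = v^{-1}u^{-1}v$, whence $S\,x_0 = y_0$ directly and $S\,y_0 = x_0$ by conjugacy invariance. Next, $S_\ast^{-1}(uv) = u^{-1}v$; combining $\tr M = \tr M^{-1}$ with cyclic invariance gives $|\tr(u^{-1}v)| = |\tr(uv^{-1})|$, which the Fricke identity rewrites as $|\tr u\,\tr v - \tr(uv)|$. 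The preceding positivity lemma applied to the triple $(X_0, Y_0, W_0)$ shows the absolute value can be dropped, so $S\,z_0 = x_0 y_0 - z_0 = w_0$. The $ST$ part is the same calculation: $(ST)_\ast^{-1}(u) = uv$, $(ST)_\ast^{-1}(v) = v^{-1}u^{-1}v$, and $(ST)_\ast^{-1}(uv) = v$, yielding $(z_0, x_0, y_0)$.

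For $\Sf_{0,4}$: The same recipe applies in the free group $\langle u, v, w\rangle$, using $S_\ast^{-1} = \tau_\ast^2 \sigma_\ast^{-1}$ (since $S = \sigma\tau^{-2}$) and $(ST)_\ast^{-1} = \tau_\ast^{-1} S_\ast^{-1}$. I compute the images of $vw$, $wu$, $uv$ under these automorphisms and, after cyclic conjugation, obtain words whose traces match $x_0$, $y_0$, $z_0$, or, via one invocation of the Fricke identity combined with the parabolic trace values $\tr\tilde\varphi = \tr\tilde\psi = \tr\tilde\omega = -2$, the expression $x_0 y_0 - z_0 - 8 = w_0$.

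The main obstacle I anticipate is the $S\,z_0 = w_0$ identification in the $\Sf_{0,4}$ case, where $\tau_\ast^2$ expands a length-two word into a substantially longer one. Reducing it requires careful bookkeeping of cyclic conjugations and iterated Fricke applications, and the constant shift $-8$ in $w_0 = x_0 y_0 - z_0 - 8$ originates precisely from the parabolic trace values $-2$ appearing as prefactors in those Fricke evaluations.
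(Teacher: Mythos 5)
Your proposal is correct and follows essentially the same route as the paper: compute $S_\ast^{-1}$ and $(ST)_\ast^{-1}$ on the relevant words from the explicit formulas for $\sigma_\ast^{-1},\tau_\ast^{-1}$, then identify the resulting traces using conjugacy invariance, $\tr M=\tr M^{-1}$, and the identity $\tr AB+\tr AB^{-1}=\tr A\tr B$, dropping the absolute value in $S z_0=|x_0y_0-z_0|=w_0$ by positivity. The paper dispatches the $\Sf_{0,4}$ case with ``by similar calculation,'' and your account of where the $-8$ comes from (the products of parabolic traces $(-2)(-2)$ in the iterated Fricke evaluations) correctly fills in that omitted computation.
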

\begin{proof}
In the case of $\Sf_{1,1}$,
\begin{align*}
S_\ast^{-1} (u) &= v, \ S_\ast^{-1} (v) =v^{-1}u^{-1}v , \ S_\ast^{-1} (uv) = u^{-1}v. \\
(ST_\ast)^{-1} (u) &= uv, \ (ST_\ast)^{-1} (v) = v^{-1}u^{-1}v, \ (ST_\ast)^{-1} (uv) = v.
\end{align*} 
where $S_\ast$ and $ST_\ast$ are automorphisms of $\pi_1(\Sf_{1,1})$ induced by $S$ and $ST$.

Therefore, 
\begin{align*}
S x_0&=y_0, \\
 S y_0&=\mathrm{Tr}_{v^{-1}u^{-1}v}\\
 &=x_0,\\
 S z_0&= \mathrm{Tr}_{u^{-1}v}\\
 &=|\tr \tilde{\varphi} (\xi_{1,1}(\cdot)) \tr \tilde{\psi} (\xi_{1,1}(\cdot)) - \tr\tilde{\varphi} \tilde{\psi}(\xi_{1,1}(\cdot))|\\
 &= |x_0y_0-z_0|\\
 &=x_0y_0-z_0\\
 &=w_0.\\
 ST  x_0&=z_0,\\
 ST  y_0&=\mathrm{Tr}_{v^{-1}u^{-1}v}\\
 &=x_0,\\
 ST  z_0 &=y_0.
\end{align*}
We used the trace identity: $\tr AB +\tr AB^{-1}=\tr A \tr B$ in the calculation of $S z_0$. By similar calculation to this, the lemma for the case of $\Sf_{0,4}$ follows.
\end{proof}

   \begin{cor}
   \label{bij}
  $\phi$ is a bijection.
  \end{cor}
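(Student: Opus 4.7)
The plan is to prove bijectivity of $\phi$ by exhibiting it as a $\Mod(\Sf)$-equivariant map between two transitive actions, with Lemma \ref{actOnC} providing the base case. First I would verify well-definedness: each region $W \in \Omega \setminus \{X_0, Y_0, Z_0\}$ is separated from the base vertex $X_0 \cap Y_0 \cap Z_0$ by a unique nearest edge of the 3-regular tree, which singles out a unique triple $(X, Y, Z)$ with $(X, Y; Z, W)$ and all of $\phi(X), \phi(Y), \phi(Z)$ already assigned. Since the tree is acyclic, induction on the tree-distance from the base vertex produces an unambiguous value $\phi(W) = \phi(X)\phi(Y) - \phi(Z)$ (respectively $-8$).

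Next I would establish $\Mod(\Sf)$-equivariance. The effective quotient $\Psl(2,\mathbb{Z}) = \langle S, T \mid S^2 = (ST)^3 = \mathrm{id} \rangle$ of $\Mod(\Sf)$ acts on the 3-regular tree as its automorphism group (the tree is dual to the Farey tessellation), and Lemma \ref{actOnC} shows that $S$ and $ST$ act on the base triple $(x_0, y_0, z_0) \in \mathcal{C}^3$ in the same way as they act on $(X_0, Y_0, Z_0) \in \Omega^3$ through $\phi$. Combined with the mutation rule, this gives $g \cdot \phi(X) = \phi(g \cdot X)$ for all $g \in \Mod(\Sf)$ and $X \in \Omega$, by induction on word length in $\{S, T\}$.

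Bijectivity then reduces to orbit and stabilizer considerations. For surjectivity, the change-of-coordinates principle says that every essential simple closed curve in $\Sf_{1,1}$ or $\Sf_{0,4}$ lies in the MCG-orbit of the curve giving $x_0$, so every $\Tr_\gamma \in \mathcal{C}$ equals $g \cdot x_0 = \phi(g \cdot X_0)$ for some $g \in \Mod(\Sf)$. For injectivity, the MCG acts transitively on $\Omega$, and the stabilizer of $X_0$ coincides with the stabilizer of $x_0$ (generated, in the effective quotient, by the Dehn twist around $u$), so distinct regions have distinct images. The main obstacle will be making the equivariance step rigorous: one must check that the mutation formula $\phi(W) = \phi(X)\phi(Y) - \phi(Z)$ (respectively $-8$) really is the correct trace identity at every tree flip, and not just at the base vertex. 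This is exactly what Lemma \ref{markoff1} supplies, since the Markov-type identity at the vertex $X \cap Y \cap Z$ is a quadratic in the third coordinate whose two roots sum to $\phi(X)\phi(Y)$ (respectively $\phi(X)\phi(Y) - 8$) and correspond precisely to the two regions on either side of the edge $X \cap Y$.
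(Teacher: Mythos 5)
Your proposal is correct and runs on the same engine as the paper's proof --- Lemma \ref{actOnC} together with the fact that $\mathcal{C}$ is a single $\Mod(\Sf)$-orbit of $x_0$ --- but it distributes the work differently. The paper's argument is essentially two lines: it invokes the pre-existing correspondence between $\Omega$ and free homotopy classes of essential curves (Bowditch), reduces well-definedness and bijectivity to surjectivity of $\phi$, and gets surjectivity because every $g\,x_0$ equals some $\phi(F)$ by Lemma \ref{actOnC} and $\bigcup_{g} g\,x_0=\mathcal{C}$; implicitly $\phi$ is being identified with that canonical bijection, which is what makes injectivity come for free. You instead make $\phi$ self-supporting: well-definedness by induction on tree-distance via the unique edge separating a region from the base vertex, equivariance by induction on word length in $S,T$, surjectivity by the change-of-coordinates principle, and injectivity by matching the stabilizer of $X_0$ (a maximal parabolic of $\Psl(2,\mathbb{Z})$ acting on the Farey tree) with the stabilizer of $x_0$ (the twist $\langle T\rangle$). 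This yields a more complete proof than the one printed, at the cost of two verifications the paper never makes explicit, and I would only caution you on the attribution in your last step: Lemma \ref{markoff1} is itself proved by induction \emph{from} the recursion defining $\phi$, so the Vieta observation (the two roots of the Markov quadratic summing to $\phi(X)\phi(Y)$, resp.\ $\phi(X)\phi(Y)-8$) shows internal consistency of the recursion but cannot by itself certify that $\phi(W)$ is an actual trace function $\Tr_\gamma$ at every flip; that certification has to come from your equivariance step, which propagates the trace identity $\tr AB+\tr AB^{-1}=\tr A\tr B$ underlying Lemma \ref{actOnC} from the base vertex to the whole tree. With that role correctly assigned, the argument closes.
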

  \begin{proof}
    Since there is a bijective correspondence between $\Omega$ and $\mathcal{C}$, we should show that for any $c \in \mathcal{C}$, there exists $F \in \Omega$ such that $\phi(F)=c$ to prove well-definedness and bijectivity. From Lemma \ref{actOnC}, for all $g\in \Mod(\Sf)$, there exists $F\in \Omega$ such that $g x_0=\phi (F)$. Since $\bigcup_{g\in \Mod(\Sf)}g  x_0=\mathcal{C}$, the lemma follows.
  \end{proof}

  \subsubsection{Systoles}
  \label{syst}

   Let $v_0 \in \mathcal{T}(\Sf)$ denote the point such that $x_0 (v_0)= y_0 (v_0)=z_0 (v_0)$ and $v_1 \in \mathcal{T}(\Sf)$ such that $x_0 (v_1)=y_0(v_1)=w_0(v_1)$. Then $ST  v_0=v_0$ and $v_1=T  v_0$. 
   
   Let
   \[
   \mathfrak{s} \coloneqq \{m \in \mathcal{T}(\Sf) \mid z_0(m),w_0(m) \geq x_0(m)=y_0(m)\}.
   \]
   Then $\mathfrak{s}$ is a curve joining $v_0$ and $v_1$. Namely, $\Phi_{1,1}'(\mathfrak{s})=\{(\alpha, 0) \mid \alpha \in [0,2\log 2]\}$ and $\Phi_{0,4}'(\mathfrak{s})=\{(\alpha, 0) \mid \alpha \in [-\log 2,\log 2]\}$.
   
   Then the union:
 \[
  \Gamma  \coloneqq \bigcup _{g \in \Mod(\Sf)} g  \mathfrak{s}
 \]
 is a properly embedded 3-regular tree on $\mathcal{T}(\Sf)$. We draw $\Phi_{1,1}'(\Gamma)$ and $\Phi_{0,4}'(\Gamma)$ in Figure \ref{tritreeS11} and Figure \ref{tritreeS04}.

 \begin{thm}
 \label{thethm}
 For $m \in \mathcal{T}(\Sf)$;
\begin{enumerate}
\item if $m$ is a vertex of $\Gamma$, then $m$ has precisely three different systoles,
\item if $m$ is on an edge of $\Gamma$ but not a vertex, then $m$ has precisely two different systoles,
\item otherwise, $m$ has only one systole.
\end{enumerate}
 \end{thm}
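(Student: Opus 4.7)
My approach is to combine $\Mod(\Sf)$-equivariance with a Markoff-style descent on the tree $\Omega$ of complementary regions, reducing the theorem to finitely many local cases.

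First I would observe that $g\Tr_\gamma(m)=\Tr_\gamma(gm)$, so the set of minimizers of $\mathcal{C}$ at $m$ transforms $\Mod(\Sf)$-equivariantly; combined with the $\Mod(\Sf)$-invariance of $\Gamma=\bigcup_g g\mathfrak{s}$ and with Lemma \ref{actOnC} (which gives $\Mod(\Sf)$-transitivity on the vertices, edges, and complementary faces of $\Gamma$), it suffices to verify the theorem at the single vertex $v_0$, on the open interior of $\mathfrak{s}$, and on the interior of the complementary face on which $x_0(m)$ is the strictly smallest of $x_0,y_0,z_0,w_0$.

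The main tool is a descent lemma extracted from Lemma \ref{markoff1}. Given any common-vertex triple $(X,Y,Z)$ and a flip $(X,Y;Z,W)$, the Markoff relation yields $w=xy-z$ for $\Sf_{1,1}$ and $w=xy-z-8$ for $\Sf_{0,4}$, so $z$ and $w$ are the two roots of a single quadratic; thus pointwise in $\mathcal{T}(\Sf)$, $w(m)\le z(m)$ is equivalent to $z(m)$ being the larger root, an explicit inequality in $x(m),y(m)$. At each triple this picks out a well-defined ``larger element'', and flipping it strictly decreases its value. Iterating this strict descent on the tree of triples produces a finite nonempty \emph{attracting subtree} $A_m$ of triples that are stable under every flip of their largest element. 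An induction on the tree-distance of an arbitrary face $C\in\Omega$ from $A_m$, reapplying the same quadratic inequality at each step, shows that $c(m)$ strictly exceeds the minimum among faces appearing in $A_m$; hence the systoles of $m$ are precisely the faces realized in $A_m$.

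It then remains to identify $A_m$ with the combinatorial location of $m$ relative to $\Gamma$. At $v_0$, the three pairs $(X_0,Y_0),(Y_0,Z_0),(X_0,Z_0)$ all hit the equality case of the descent inequality, so $A_m$ is the tripod of three triples meeting at $(X_0,Y_0,Z_0)$ and the three systoles are $x_0,y_0,z_0$. On the interior of $\mathfrak{s}$, exactly the two adjacent triples $(X_0,Y_0,Z_0)$ and $(X_0,Y_0,W_0)$ are attracting and both $x_0,y_0$ realize the minimum. On the interior of a complementary face of $\Gamma$, $A_m$ collapses to a single triple and exactly one of its elements strictly minimizes. The main obstacle is the induction step propagating the quadratic inequality from Lemma \ref{markoff1} through arbitrary sequences of flips in the infinite tree of triples; the $\Sf_{0,4}$ case is particularly delicate because the $-8$ correction forces careful use of the positivity established in the lemma following Lemma \ref{markoff1} to keep the discriminant of the descent quadratic under control.
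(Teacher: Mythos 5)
Your global strategy (a Bowditch-style descent to a finite attracting set, plus $\Mod(\Sf)$-equivariance) is a legitimate alternative to the paper's route, which instead works with continuity, the intermediate value theorem, and an explicit computation of the tie-loci. However, as written the descent step is both imprecise and internally inconsistent. At $v_0$ one has $x_0=y_0=z_0=3$ (resp.\ $7$) and every flip produces the value $6$ (resp.\ $34$), so there is no ``larger element whose flip strictly decreases'': at an attracting triple all three flips \emph{increase}. The statement you actually need is the paper's flip lemma (the unnamed lemma before Corollary \ref{ineq1}): if $z(m)>x(m),y(m)$ then the flips of $X$ and of $Y$ across the edges containing $Z$ have values exceeding $z(m)$, so that at most one edge at each vertex of the dual tree points outward and Corollary \ref{ineq1} propagates the inequality to all of $\Omega'_{X,Y;Z}$. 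This does not follow from the observation that $z$ and $w$ are the two roots of one quadratic; it requires the lower bound $\Tr_\gamma>2$ and, for $\Sf_{0,4}$, the quantitative estimate of Lemma \ref{markoff2} (mere positivity of $x,y,z$ is not enough to absorb the $-8$). You also assert, but do not prove, that the descent terminates in a \emph{finite} attracting set; this needs the discreteness of $C_m$ (equivalently, of the length spectrum) or a properness argument for $F\mapsto\phi(F)(m)$.

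The more serious gap is the final identification of $A_m$ with the position of $m$ relative to $\Gamma$, which you state as three unproved assertions. The vertex and edge cases can be salvaged from Lemma \ref{ineq2} and Corollary \ref{ineq1} (on $\mathfrak{s}\setminus\{v_0,v_1\}$ one has $z_0,w_0>x_0=y_0$ strictly, and Corollary \ref{ineq1} then kills every $F\notin\{X_0,Y_0\}$). But the claim that for $m$ in the interior of a complementary face ``$A_m$ collapses to a single triple and exactly one of its elements strictly minimizes'' is precisely the hard part of the theorem, and your sketch offers no argument for it. One must rule out that some region $F\neq X_0$ ties with $x_0$ at the minimum somewhere in $\mathring{D}$; the paper does this in Lemma \ref{adj} (tied minimizers are adjacent regions of the tree, so $F$ must be one of the $T^{k}$-translates $F_k$ of $Y_0$) together with Lemma \ref{noteq}, where the locus $\{x_0=\phi(F_k)\}=T^{k}\{x_0=y_0\}$ is computed explicitly in the coordinates of Section \ref{actofS11}/\ref{actofS04} and shown to be disjoint from $\mathring{D}$, and Corollary \ref{cor3} then concludes by the intermediate value theorem. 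Without Lemma \ref{adj} (or the explicit tie-locus computation) your argument cannot exclude a second systole off $\Gamma$, so case (3) of Theorem \ref{thethm} remains unproved.
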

 
 \begin{figure}
  \centering
  \includegraphics[width=8.5cm]{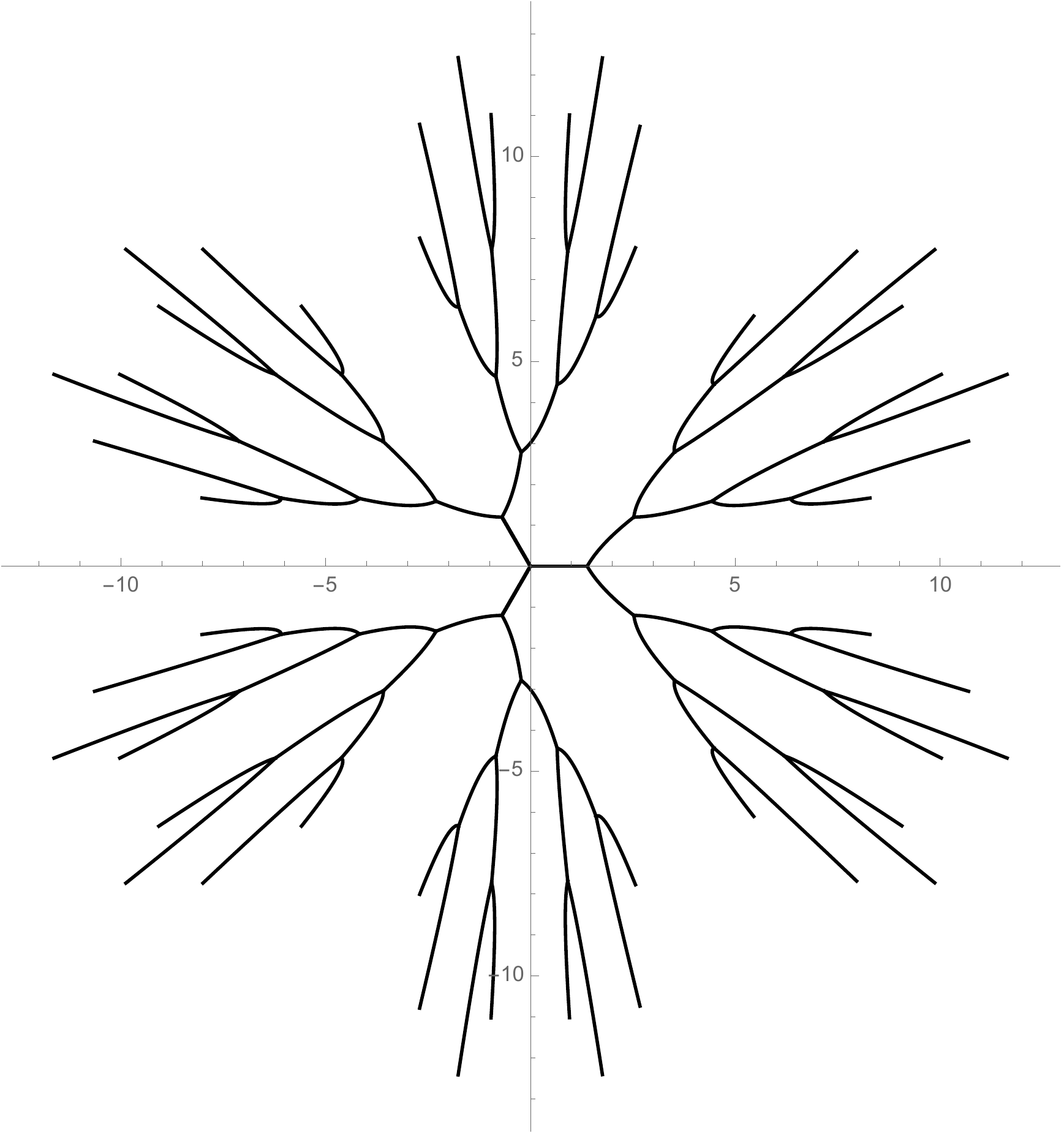}
  \caption{$\Phi_{1,1}'(\Gamma)$.}
  \label{tritreeS11}
  
  \includegraphics[width=8.5cm]{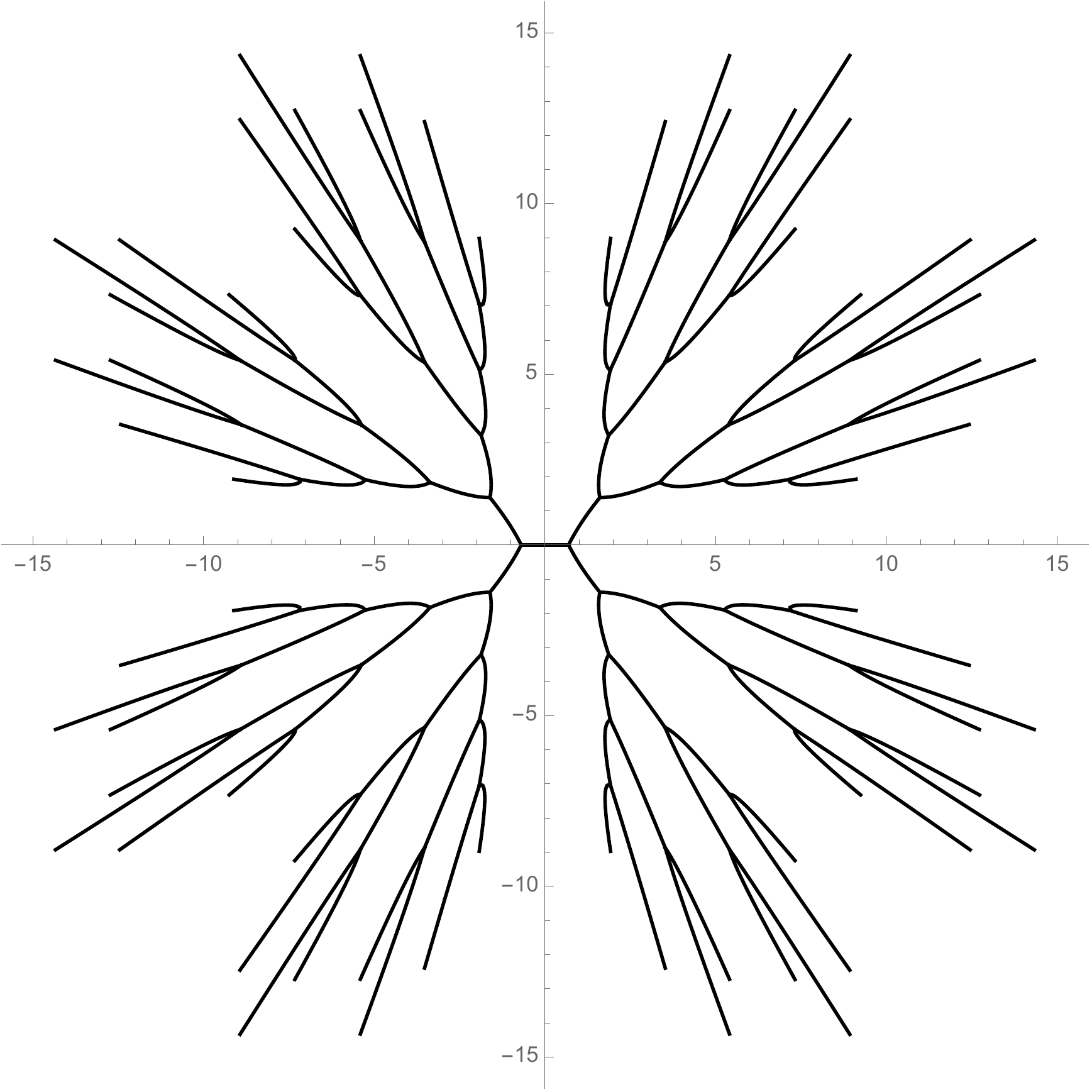}
  \caption{$\Phi_{0,4}'(\Gamma)$.}
  \label{tritreeS04}
\end{figure}


\subsubsection{2-systoles}
Let $v'\in \mathcal{T}(\Sf)$ be the point such that $x_0(v')=y_0(v'),\ z_0(v')=w_0(v')$. Then $S v'=v'$. 

Let 
\[
\mathfrak{t} \coloneqq \{ m\in \mathcal{T}(\Sf) \mid z_0(m),w_0(m)\leq x_0(m)=y_0(m) \} \cup \{v'\}
\]
 and
\[
\Delta_0 \coloneqq \bigcup_{g\in \Mod(\Sf)} g \mathfrak{t},\quad \Delta_1 \coloneqq \bigcup_{g\in \Mod(\Sf)} g \{v_0 \}.
\]
Then $\Delta_0$ and $\Delta_1$ are closed and $\Delta_1 \subset \Delta_0$.
We draw $\Phi_{1,1}'(\Delta_0)$ and $\Phi_{0,4}'(\Delta_0)$ in Figure \ref{deltaS11} and Figure \ref{deltaS04}.
\begin{thm}
\label{thethm2}
 For $m \in \mathcal{T}(\Sf)$;
\begin{enumerate}
\item if $m\in \Delta_1$, then $m$ has precisely three different 2-systoles,
\item if $m\in \Delta_0 \setminus \Delta_1$, then $m$ has precisely two different 2-systoles,
\item otherwise, $m$ has only one 2-systole.
\end{enumerate}
\end{thm}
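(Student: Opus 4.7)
The plan is to prove Theorem \ref{thethm2} by mirroring the proof template for Theorem \ref{thethm}, with the tree $\Gamma$ replaced by the graph $\Delta_0$ and systoles replaced by 2-systoles. Because $\Delta_0$, $\Delta_1$, and the number of 2-systoles at $m$ are all $\Mod(\Sf)$-invariant (the latter via $g\Tr_\gamma(m)=\Tr_\gamma(gm)$), it suffices to verify each of the three cases at one representative per $\Mod$-orbit class. I would take $v_0$ as the representative for case (1), a generic interior point of $\mathfrak{t}$ for case (2), and a generic point of $\mathcal{T}(\Sf)\setminus\Delta_0$ for case (3).

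At each reference point I would identify the 2-systoles using the complementary-region picture of $\phi$. By Theorem \ref{thethm} the systole(s) are already pinned down; one then ranks the remaining trace values. On $\mathfrak{t}$ the defining inequalities together with the Markov-like identity of Lemma \ref{markoff1} force the two curves in $\{X_0,Y_0\}$ (or the appropriate $\Mod$-translates thereof) to tie as the smallest trace among non-systoles, giving two 2-systoles. At $v_0$ the order-three stabilizer $\langle ST\rangle$ cyclically permutes three complementary regions adjacent to $v_0$; by Lemma \ref{actOnC} their $\Tr$-values agree at $v_0$, realizing three tied 2-systoles. Off $\Delta_0$ the minimal non-systole candidate is unique, giving one 2-systole.

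The main technical step is to rule out contributions from complementary regions distant from the candidate set $\{X_0,Y_0,Z_0,W_0\}$ and its small $\Mod$-translates appearing above. I would proceed by induction on the combinatorial distance in the 3-regular tree on $\Omega$: given $(X,Y;Z,W)$ with $x(m)$ and $y(m)$ already known to exceed the candidate 2-systole value, the recursion $\phi(W)=xy-z$ (respectively $\phi(W)=xy-z-8$) combined with the positivity lemma and Lemma \ref{markoff1} forces $w(m)$ to exceed that value, and the conclusion propagates to every region further out across $W$. Starting the induction at $\{X_0,Y_0,Z_0,W_0\}$ rules out any distant curve beating the listed candidates.

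The hard part will be the base cases of this induction near the degenerate configurations. Along $\mathfrak{t}$, the candidate 2-systole trace becomes equal to the trace of an adjacent region precisely on the $\Mod$-orbit of $v_0$, so the inductive margin is tight and must be checked by hand in the explicit $(\alpha,\beta)$-coordinates of \ref{parS11} and \ref{parS04}. A secondary obstacle is the transition at $v'$: the defining condition of $\mathfrak{t}$ degenerates at $v'$, which is why the definition explicitly appends $\{v'\}$, and one must verify using the $S$-symmetry fixing $v'$ that the 2-systole count remains two at $v'$ and drops to one immediately off $\Delta_0$.
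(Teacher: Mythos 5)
Your plan is essentially the paper's own proof: reduce by $\Mod(\Sf)$-equivariance to the fundamental pieces $\mathring{D}$ and $\mathfrak{s}$, identify the candidate 2-systoles among the regions adjacent to $\{X_0,Y_0,Z_0,W_0\}$, kill all distant regions by the tree-distance induction built on the recursion $\phi(W)=\phi(X)\phi(Y)-\phi(Z)(-8)$ together with Lemmas \ref{markoff1} and \ref{markoff2} (this is exactly Corollary \ref{ineq1} as deployed in Lemmas \ref{2syslem1}--\ref{2syslem3}), and then settle the degenerate points $v_0$ and $v'$ by explicit coordinate checks and continuity. The only cosmetic difference is that you obtain the three-way tie $w_0(v_0)=w_1(v_0)=w_2(v_0)$ from the $\langle ST\rangle$-symmetry fixing $v_0$, where the paper simply computes the common value ($6$, resp.\ $34$) in Lemma \ref{2syslem3}; both still require the same ruling-out of all other regions.
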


 \begin{figure}
  \centering
  \includegraphics[width=8.5cm]{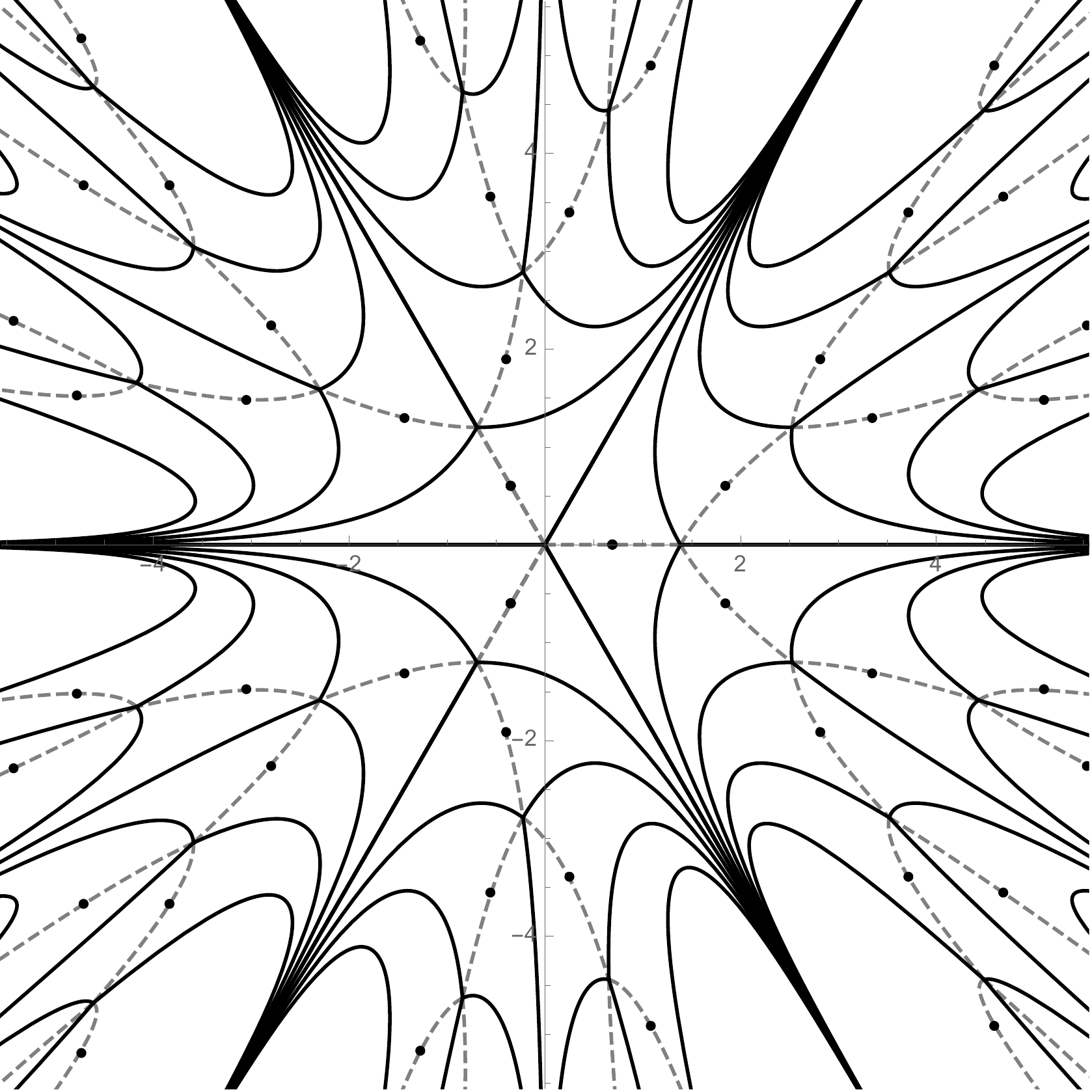}
  \caption{$\Phi_{1,1}'(\Delta_0)$ (the set of intersection points of three half lines is $\Phi_{1,1}'(\Delta_1)$, dashed line is $\Phi_{1,1}'(\Gamma$)).}
  \label{deltaS11}

  \includegraphics[width=8.5cm]{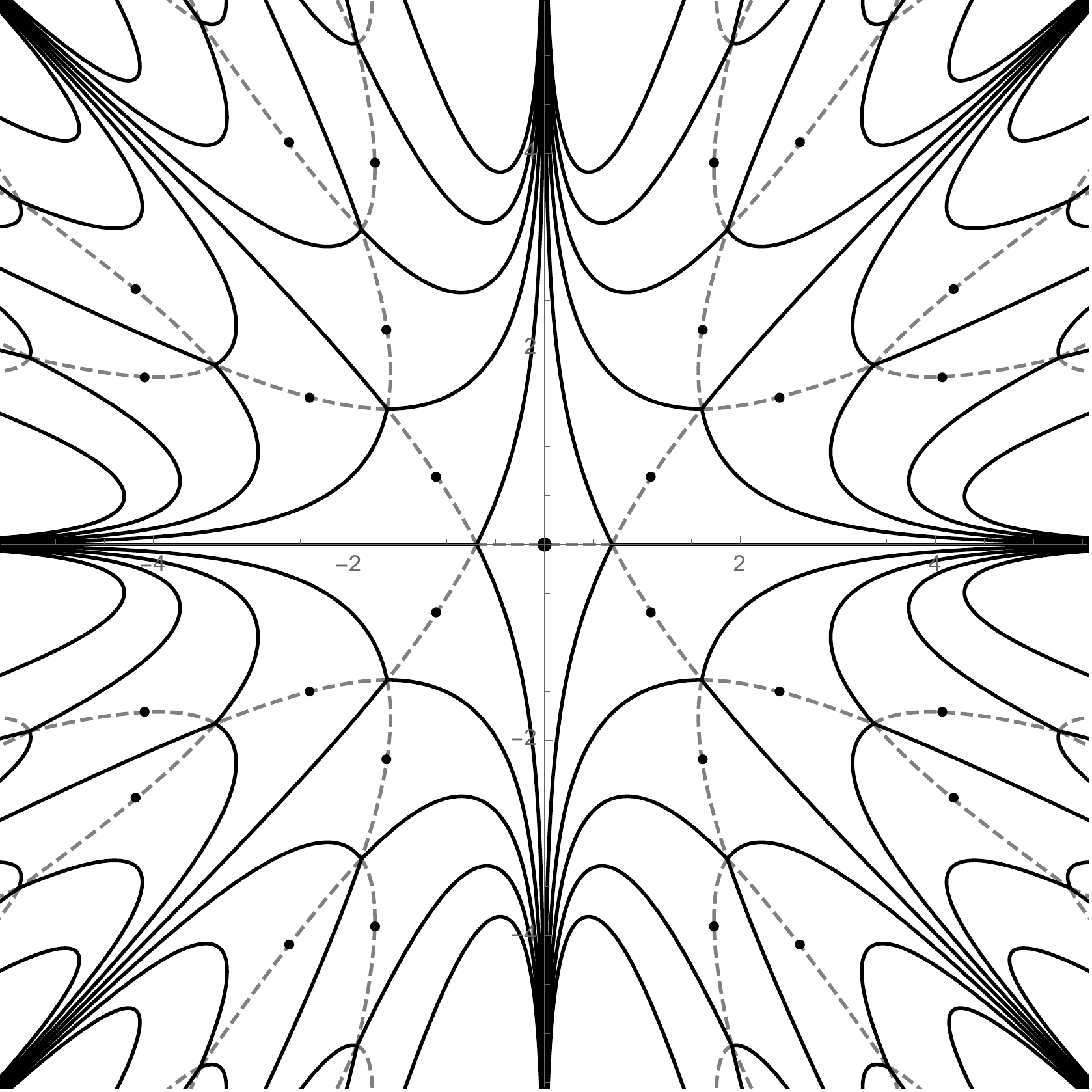}
  \caption{$\Phi_{0,4}'(\Delta_0)$ (the set of intersection points of three half lines is $\Phi_{0,4}'(\Delta_1)$, dashed line is $\Phi_{0,4}'(\Gamma)$).}
  \label{deltaS04}
\end{figure}

 \subsection{Proof of Theorems}
 \label{pfOfThm}
 
 \subsubsection{Proof of Theorem \ref{thethm}}

  \begin{lem}
  \label{markoff2}
  For any triple of real numbers $(x,y,z)$ such that $x,y,z>2$ and $x^2+y^2+z^2-xyz+8(x+y+z)+28=0$, 
  \[
  xy \geq 8+2\sqrt{(x+4)^2+(y+4)^2-4}.
  \]
  
  \end{lem}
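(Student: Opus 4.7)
The plan is to view the given relation as a quadratic in $z$ and extract the inequality from the discriminant. Rewriting
\[
x^2+y^2+z^2-xyz+8(x+y+z)+28=0
\]
as a quadratic in $z$ gives
\[
z^2+(8-xy)z+\bigl(x^2+y^2+8x+8y+28\bigr)=0.
\]
Since $z$ is real, the discriminant must be nonnegative: $(xy-8)^2\geq 4(x^2+y^2+8x+8y+28)$. A direct expansion of both sides verifies the identity
\[
(xy-8)^2-4(x^2+y^2+8x+8y+28)=(xy-8)^2-4\bigl((x+4)^2+(y+4)^2-4\bigr),
\]
so the discriminant condition is exactly
\[
(xy-8)^2\geq 4\bigl((x+4)^2+(y+4)^2-4\bigr),
\]
equivalently $|xy-8|\geq 2\sqrt{(x+4)^2+(y+4)^2-4}$.

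It remains to pin down the sign, i.e.\ to show that $xy-8>0$ so that $|xy-8|=xy-8$. Suppose toward contradiction that $xy\leq 8$. Then the absolute value condition gives
\[
8-xy\geq 2\sqrt{(x+4)^2+(y+4)^2-4}.
\]
But $x,y>2$ yields $(x+4)^2>36$ and $(y+4)^2>36$, so $(x+4)^2+(y+4)^2-4>68$, and hence
\[
8-xy>2\sqrt{68}>16,
\]
giving $xy<-8$, which contradicts $x,y>0$. Thus $xy>8$ and the desired inequality
\[
xy\geq 8+2\sqrt{(x+4)^2+(y+4)^2-4}
\]
follows immediately from the discriminant bound.

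I do not foresee a serious obstacle. The only genuine content is the algebraic identity rewriting the discriminant as $(xy-8)^2-4((x+4)^2+(y+4)^2-4)$, which is a routine expansion, and the sign argument, which is handled cleanly by the coarse bound $(x+4)^2+(y+4)^2-4>68$ coming from $x,y>2$.
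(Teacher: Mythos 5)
Your proof is correct and follows essentially the same route as the paper: view the relation as a quadratic in $z$ and read off the inequality from the nonnegativity of the discriminant. In fact you are slightly more careful than the paper, which passes from $(8-xy)^2\geq 4\bigl((x+4)^2+(y+4)^2-4\bigr)$ directly to $xy-8\geq 2\sqrt{(x+4)^2+(y+4)^2-4}$ without justifying that $xy-8$ is nonnegative; your contradiction argument using $x,y>2$ supplies exactly that missing sign check.
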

  \begin{proof}
 Rewriting the equation as 
 \[
 z^2+z(8-xy)+(x+4)^2+(y+4)^2-4=0,
 \] we get the condition for $z$ to exists:
 \[
  (8-xy)^2-4(x+4)^2-4(y+4)^2+16 \geq 0.
  \]
  Therefore, 
  \[
  xy-8\geq 2 \sqrt{(x+4)^2+(y+4)^2-4}.
  \] 
  \end{proof}

\begin{lem}
For $X,Y,Z,W_1,W_2 \in \Omega$ such that $(X,Z;W_1,Y), (Y,Z;W_2,X)$ (see Figure \ref{tri2}), 
\[
z(m)>x(m),y(m) \Rightarrow w_1(m),w_2(m)>z(m).
\]

\end{lem}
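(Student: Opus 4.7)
The plan is to apply the recursive definition of $\phi$ to obtain explicit formulas for $w_1$ and $w_2$ in terms of $x,y,z$, and then verify the desired inequality by a direct estimate in the case of $\Sf_{1,1}$ and by invoking Lemma \ref{markoff2} in the case of $\Sf_{0,4}$. From the configurations $(X,Z;W_1,Y)$ and $(Y,Z;W_2,X)$ combined with the rule defining $\phi$ (or equivalently, Vieta jumping in the Markov-like equation of Lemma \ref{markoff1}), I get
\[
w_1 = xz - y, \qquad w_2 = yz - x
\]
when $\Sf = \Sf_{1,1}$, and $w_1 = xz - y - 8$, $w_2 = yz - x - 8$ when $\Sf = \Sf_{0,4}$.

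For $\Sf_{1,1}$ the inequality $w_1(m) > z(m)$ is equivalent to $z(x-1) > y$. Since trace functions of essential curves always exceed $2$ on $\mathcal{T}(\Sf)$, one has $x > 2$, hence $x-1 > 1$. Combined with the hypothesis $z > y$ this yields $z(x-1) > z > y$, as required. The estimate for $w_2$ follows symmetrically by exchanging the roles of $x$ and $y$.

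The main obstacle is the $\Sf_{0,4}$ case, where the additive constant $-8$ blocks this naive argument. Here I invoke Lemma \ref{markoff2} applied to the triple with values $(x,z,y)$; since the identity of Lemma \ref{markoff1} is symmetric in its three arguments, the lemma yields $xz \geq 8 + 2\sqrt{(x+4)^2 + (z+4)^2 - 4}$. Because $x > 2$ gives $(x+4)^2 > 4$, the square root strictly exceeds $z+4$, so $xz - 8 > 2(z+4)$, i.e.\ $xz > 2z + 16$. Therefore
\[
w_1 = xz - y - 8 > 2z + 8 - y > z,
\]
the final step using $z > y$ together with $y > 2$, so that $z + 8 > y$. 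An identical estimate applied to the pair $(y,z)$ gives $w_2(m) > z(m)$, completing the proof.
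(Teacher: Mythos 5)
Your proposal is correct and follows essentially the same route as the paper: read off $w_1=xz-y$ (resp.\ $xz-y-8$) and $w_2=yz-x$ (resp.\ $yz-x-8$) from the recursion, handle $\Sf_{1,1}$ by a direct estimate using $x,y>2$, and handle $\Sf_{0,4}$ via Lemma \ref{markoff2} applied to the triple $(x,z,y)$. The only difference is cosmetic: you bound the square root below by $z+4$ to get $xz>2z+16$, whereas the paper keeps the radical and checks $2\sqrt{(z+4)^2+32}-2z>0$.
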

\begin{proof}
Suppose that $z(m)>x(m),y(m)$. 

In the case of $\Sf_{1,1}$, 
\begin{align*}
w_1(m)-z(m)&=x(m)z(m)-y(m)-z(m)\\
&>z(m)(x(m)-2)\\
&>0.
\end{align*} 
Similarly, $w_2(m)-z(m)>0$.

In the case of $\Sf_{0,4}$, by Lemma \ref{markoff1} and Lemma \ref{markoff2},
  \begin{align*}
  w_1(m)-z(m)&=x(m)z(m)-y(m)-8-z(m)\\
  &\geq 2 \sqrt{(x(m)+4)^2+(z(m)+4)^2-4}-y(m)-z(m)\\
  &>2 \sqrt{(z(m)+4)^2+32}-2z(m)\\
  &>0.
  \end{align*}
  Similarly, $w_2(m)-z(m)>0$.
\end{proof}

\begin{figure}[h]
  \centering
  \includegraphics[width=4.5cm]{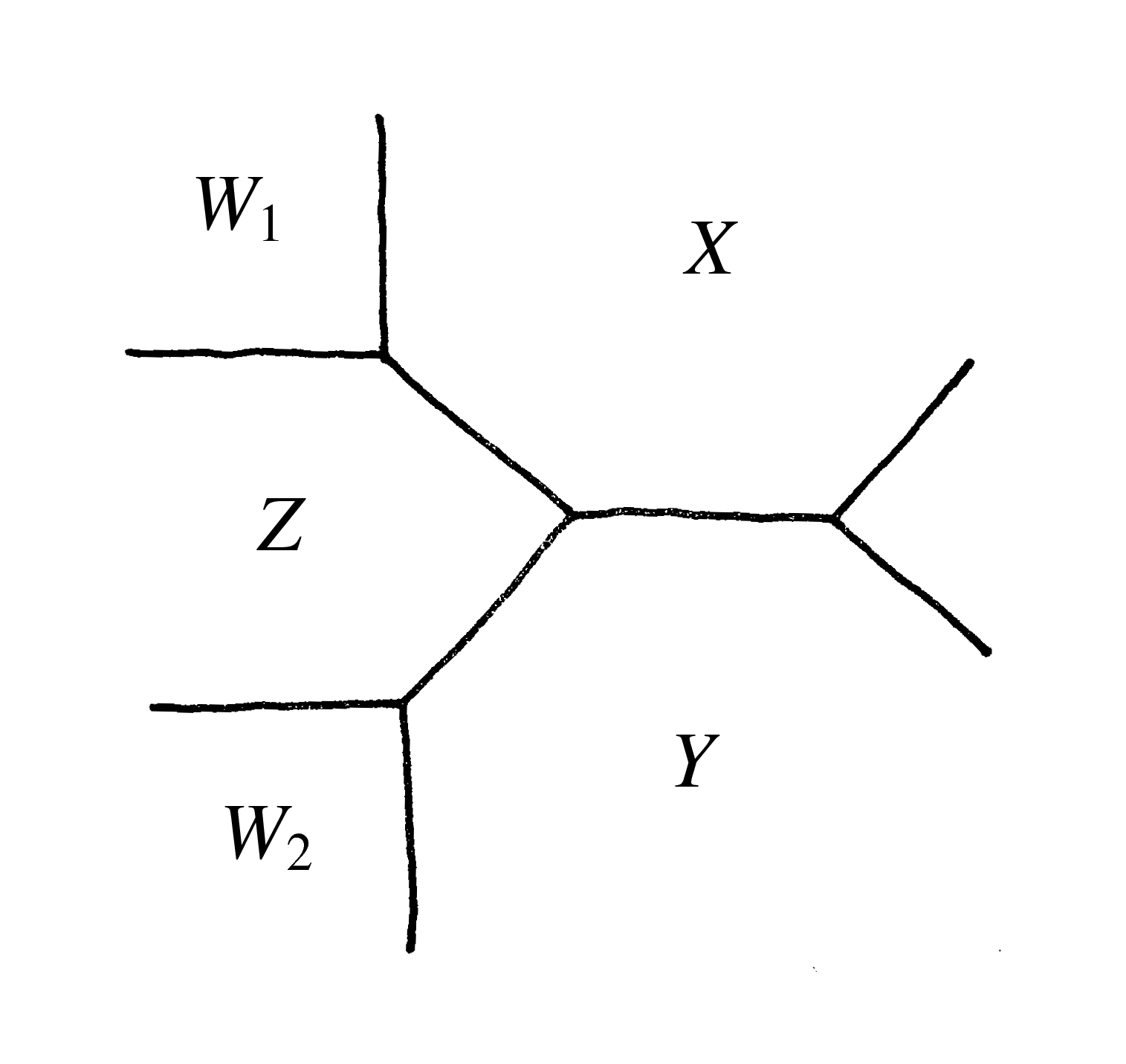}
  \caption{$(X,Z;W_1,Y),(Y,Z;W_2,X)$.}
   \label{tri2}
   \end{figure}

Suppose that $X,Y,Z,W \in \Omega$ and $(X,Y ; Z,W)$. Then $\Omega$ is divided into $\{X,Y\}$ and two subset of $\Omega$, the left hand side of $\{X,Y\}$ and the right hand side of it. We let $\Omega '_{X,Y;Z}$ denote one of these that contains $Z$.  
\begin{cor}
\label{ineq1}
For any $F \in \Omega'_{X,Y;Z}$, 
\[
z(m)>x(m),\ y(m) \Rightarrow \phi (F)(m)>x(m),y(m).
\]

\end{cor}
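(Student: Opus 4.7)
The plan is to strengthen the statement into a form amenable to induction and then iterate the preceding lemma. Specifically, I would prove that for every triangle of pairwise-adjacent regions $(A,B,C)$ (i.e.\ with $A\cap B\cap C\neq\emptyset$) satisfying $c(m)>\max\{a(m),b(m)\}$, every $F\in\Omega'_{A,B;C}$ obeys $\phi(F)(m)\ge c(m)$, with equality only at $F=C$. Specialising to $(A,B,C)=(X,Y,Z)$ immediately yields the corollary, since the hypothesis $z(m)>x(m),y(m)$ is exactly the new premise.

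The induction would be on the combinatorial distance $d(F,C)$ in the dual $3$-regular tree. The base case $F=C$ is trivial. For the inductive step with $F\neq C$, I would use the key combinatorial decomposition
\[
\Omega'_{A,B;C}=\{C\}\sqcup\Omega'_{A,C;W_1}\sqcup\Omega'_{B,C;W_2},
\]
where $W_1$ and $W_2$ are the regions supplied by the preceding lemma, characterised by $(A,C;W_1,B)$ and $(B,C;W_2,A)$. Thus $F$ must lie in one of $\Omega'_{A,C;W_1}$ or $\Omega'_{B,C;W_2}$; say the former after relabelling. The preceding lemma applied to $(A,B,C)$ gives $w_1(m)>c(m)>a(m)$, so in the triangle $(A,C,W_1)$ the value at $W_1$ is the strict maximum. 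Since $d(F,W_1)<d(F,C)$, the inductive hypothesis for the new triangle $(A,C,W_1)$ produces $\phi(F)(m)\ge w_1(m)>c(m)$, closing the induction.

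The main obstacle is not analytic but combinatorial: justifying the displayed decomposition of $\Omega'_{A,B;C}$ into the root region $C$ and the two smaller ``subtree'' pieces $\Omega'_{A,C;W_1}$, $\Omega'_{B,C;W_2}$, and verifying that the strict-maximum hypothesis is transferred faithfully from $(A,B,C)$ to $(A,C,W_1)$ and $(B,C,W_2)$. Both follow from the very definition of the notation $(\cdot,\cdot;\cdot,\cdot)$ and the three-regular-tree structure underlying $\Omega$. Once these observations are in place, the preceding lemma does all the analytic work and only a one-line recursion is left.
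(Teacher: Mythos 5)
Your proof is correct and follows essentially the same route as the paper, whose proof of this corollary is literally the words ``By induction'': you have simply made explicit the intended induction, namely the decomposition $\Omega'_{X,Y;Z}=\{Z\}\sqcup\Omega'_{X,Z;W_1}\sqcup\Omega'_{Y,Z;W_2}$ together with iterated application of the preceding lemma, which transfers the strict-maximum hypothesis to the two sub-fans.
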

\begin{proof}
By induction.
\end{proof}
\begin{lem}
\label{ineq2}
Suppose that $X,Y,Z \in \Omega$ and $X\cap Y \cap Z \not= \emptyset$. For any $F\in \Omega \setminus \{X,Y,Z\}$, 
\[
x(m)=y(m)=z(m) \Rightarrow \phi(F)(m)>x(m).
\]

\end{lem}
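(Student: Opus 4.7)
My plan is to reduce the claim to Corollary \ref{ineq1} by partitioning $\Omega \setminus \{X,Y,Z\}$ according to the three branches emanating from the vertex $v$ at which $X$, $Y$, $Z$ meet in the tree. Let $W$, $W'$, $W''$ be the regions defined by $(X,Y;Z,W)$, $(Y,Z;X,W')$, $(X,Z;Y,W'')$. Deleting $v$ from the tree splits it into three subtrees, one containing each of $W, W', W''$; correspondingly, any $F \in \Omega \setminus \{X,Y,Z\}$, being a region whose boundary avoids $v$, lies entirely in exactly one of $\Omega'_{X,Y;W}$, $\Omega'_{Y,Z;W'}$, $\Omega'_{X,Z;W''}$.

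The second step is to verify $w(m), w'(m), w''(m) > x(m)$. Using Lemma \ref{markoff1} under the constraint $x=y=z$ and $x>2$, I would solve for the common value explicitly: $x(m)=3$ in the $\Sf_{1,1}$ case (from $x^3 = 3x^2$) and $x(m)=7$ in the $\Sf_{0,4}$ case (the unique root of $x^3 - 3x^2 - 24x - 28 = 0$ with $x>2$). Then from the recursion $w = xy - z$ (respectively $w = xy - z - 8$) I obtain $w(m)=6$ (respectively $34$), each strictly larger than $x(m)$; by the symmetry of the hypothesis under permutations of $X, Y, Z$, the same bound holds for $w'(m)$ and $w''(m)$.

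To conclude, I would apply Corollary \ref{ineq1} to each branch in turn. For $F \in \Omega'_{X,Y;W}$, the triple $X, Y, W$ satisfies $X \cap Y \cap W \neq \emptyset$ and $w(m) > x(m), y(m)$, so Corollary \ref{ineq1} (with $W$ playing the role of $Z$) gives $\phi(F)(m) > x(m) = y(m)$; the other two branches are handled symmetrically. Hence $\phi(F)(m) > x(m)$ for every $F \in \Omega \setminus \{X,Y,Z\}$.

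The step I expect to be the main obstacle, modest as it is, is the combinatorial decomposition $\Omega \setminus \{X,Y,Z\} = \Omega'_{X,Y;W} \sqcup \Omega'_{Y,Z;W'} \sqcup \Omega'_{X,Z;W''}$: one must confirm that a complementary region whose boundary avoids the vertex $v$ must lie in exactly one of the three subtrees cut out at $v$, and that this identifies it with membership in exactly one of the three $\Omega'$-sets above. Once this tree-theoretic fact is in place, the Markoff identity and the recursion immediately yield the needed inequality on $w, w', w''$, and Corollary \ref{ineq1} finishes the argument.
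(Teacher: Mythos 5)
Your proof is correct and follows essentially the same route as the paper: the paper also takes the three regions $W_1,W_2,W_3$ opposite $Z$, $Y$, $X$ across the three edges at the common vertex, checks $w_i(m)>x(m)$ via the recursion (using $x(x-2)>0$ for $\Sf_{1,1}$ and the explicit value $x=7$ for $\Sf_{0,4}$), and then invokes Corollary \ref{ineq1} on each of the three branches. Your explicit verification of the decomposition $\Omega\setminus\{X,Y,Z\}=\Omega'_{X,Y;W}\sqcup\Omega'_{Y,Z;W'}\sqcup\Omega'_{X,Z;W''}$ only makes precise a step the paper leaves implicit.
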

\begin{proof}
Take $X, Y, Z,W_1, W_2, W_3 \in \Omega$ so that $(X,Y;Z,W_1)$, $(X,Z;Y,W_2)$ and $(Y,Z;X,W_3)$ (see Figure \ref{tri6}).
Suppose that $x(m)=y(m)=z(m)$.

In the case of $\Sf_{1,1}$, 
\begin{align*}
w_1(m)-x(m)&=w_2(m)-x(m)\\
&=w_3(m)-x(m)\\
&=x(m)y(m)-z(m)-x(m)\\
&=x(m)(x(m)-2)\\
&>0.
\end{align*}
 By Corollary \ref{ineq1}, $\phi(F)(m)>x(m)$ for any $F \in \Omega \setminus \{X,Y,Z\}$.

In the case of $\Sf_{0,4}$, by Lemma \ref{markoff1}, 
\[
x(m)=y(m)=z(m)=7.
\] 
Therefore, 
\begin{align*}
w_1(m)-x(m)&=w_2(m)-x(m)\\
&=w_3(m)-x(m)\\
&=34\\
&>0.
\end{align*}
 By Corollary \ref{ineq1}, $\phi(F)(m)>x(m)$ for any $F \in \Omega \setminus \{X,Y,Z\}$.
\end{proof}

\begin{figure}[h]
   \centering
   \includegraphics[width=4.5cm]{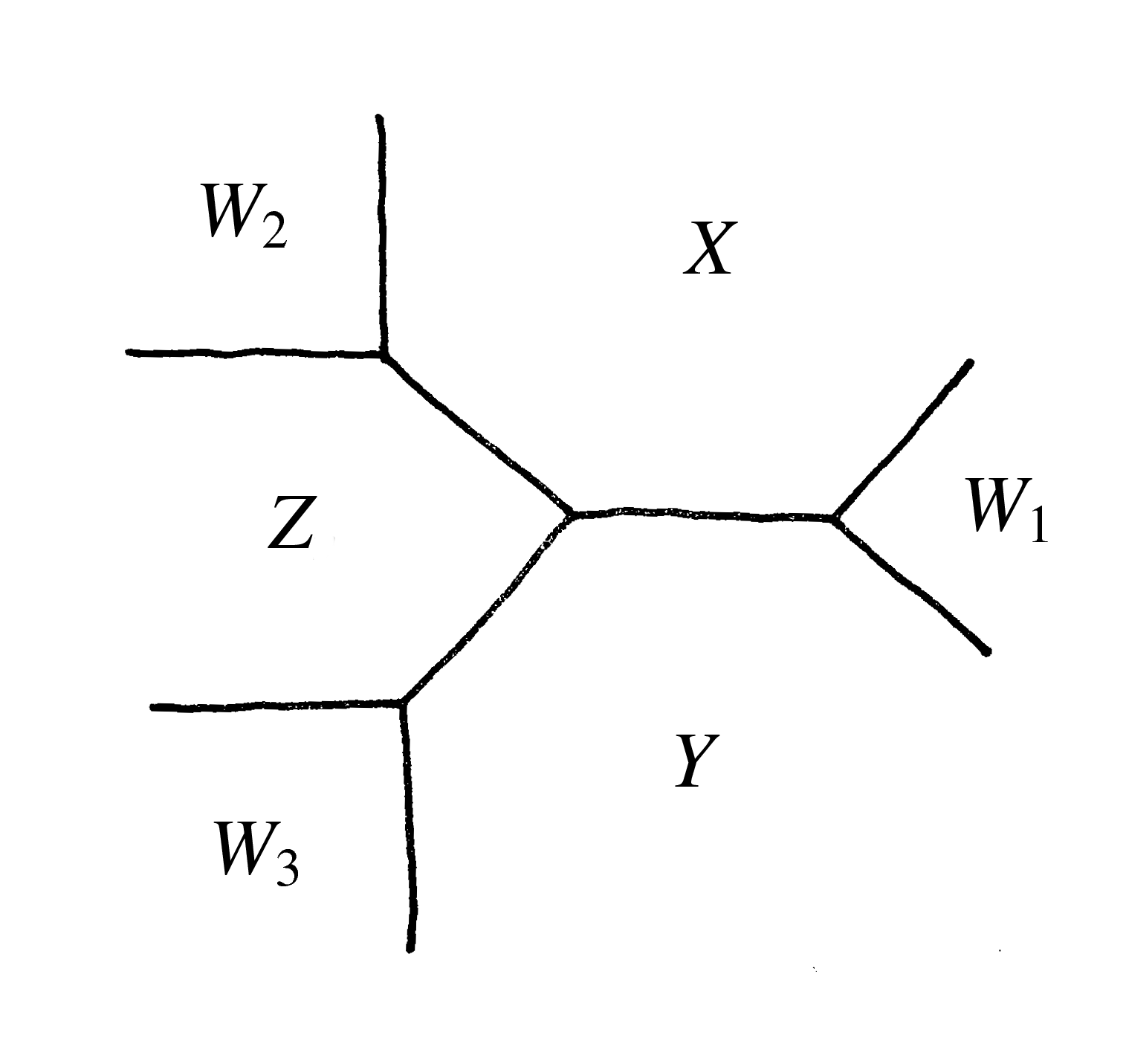}
  \caption{$(X,Y;Z,W_1),(X,Z;Y,W_2),(Y,Z;X,W_3)$.}
  \label{tri6}
  \end{figure}

\begin{cor}
\label{cor1}
Each vertex of $\Gamma$ has a neighborhood in which the claim of Theorem \ref{thethm} holds. 
\end{cor}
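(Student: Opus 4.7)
By $\Mod(\Sf)$-equivariance of $\Gamma$, of the enumeration of $\mathcal{C}$ furnished by Lemma \ref{actOnC}, and of the systole count, it suffices to treat the distinguished vertex $v_0$. Set $s_0 \coloneqq x_0(v_0) = y_0(v_0) = z_0(v_0)$. The plan is to reduce the infinite family $\{\phi(F) : F \in \Omega\}$ of candidate functions to the three functions $x_0, y_0, z_0$ on a small neighborhood of $v_0$, and then to read off the count of systoles from the local loci of coincidence among these three.

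For the reduction, Lemma \ref{ineq2} gives $\phi(F)(v_0) > s_0$ strictly for every $F \in \Omega \setminus \{X_0, Y_0, Z_0\}$. By the Bers/collar finiteness of short simple closed geodesics, the set $\mathcal{F} = \{F \in \Omega : \phi(F)(v_0) \leq s_0 + 1\}$ is finite, so there is a uniform gap $\delta > 0$ with $\phi(F)(v_0) \geq s_0 + \delta$ for $F \in \mathcal{F} \setminus \{X_0, Y_0, Z_0\}$. Combining continuity of the finitely many $\phi(F)$ indexed by $\mathcal{F}$ with a uniform lower bound $\phi(F)(m) \geq s_0 + \tfrac{3}{4}$ on a compact neighborhood of $v_0$ for every $F \notin \mathcal{F}$, I can shrink to an open neighborhood $U$ of $v_0$ satisfying
\[
\phi(F)(m) > \max\bigl(x_0(m), y_0(m), z_0(m)\bigr) \quad \text{for all } m \in U \text{ and } F \in \Omega \setminus \{X_0, Y_0, Z_0\}.
\]
Thus the systoles of every $m \in U$ are enumerated by the functions among $x_0, y_0, z_0$ attaining their minimum at $m$.

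For the local structure of $\Gamma$, Lemma \ref{actOnC} implies that $ST$ fixes $v_0$ and cyclically permutes $x_0, y_0, z_0$, so the three edges of $\Gamma$ meeting $v_0$ are $\mathfrak{s}$, $ST \mathfrak{s}$, and $(ST)^2 \mathfrak{s}$. The auxiliary function $w_0 = x_0 y_0 - z_0$ (respectively $x_0 y_0 - z_0 - 8$ in the $\Sf_{0,4}$ case) satisfies $w_0(v_0) > s_0$ strictly---explicitly $6 > 3$ for $\Sf_{1,1}$ and $34 > 7$ for $\Sf_{0,4}$---and likewise for its $ST$-translates; after shrinking $U$ these strict inequalities persist, so the three edges cut $U$ in precisely the loci $\{x_0 = y_0\}$, $\{y_0 = z_0\}$, $\{z_0 = x_0\}$ respectively.

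The corollary then follows by a direct case analysis on $m \in U$: at $v_0$ all three values coincide (three systoles); on exactly one edge but off $v_0$ exactly two coincide and are strictly smaller than the third (two systoles); off the three edges the three values are pairwise distinct and the unique minimum gives a single systole. The main obstacle is the uniformity in the first step---promoting the pointwise strict inequality of Lemma \ref{ineq2} to a strict inequality holding simultaneously for cofinitely many $F$ on an entire neighborhood. This rests on discreteness of the length spectrum together with continuity of the trace functions $\phi(F)$ in the coordinates of \S\ref{parS11} and \S\ref{parS04}; the remainder is a transparent reading of the Markov-type recursion together with the $ST$-symmetry.
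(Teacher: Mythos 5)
Your overall strategy is the same as the paper's: use Lemma \ref{ineq2} together with continuity to reduce, on a small neighborhood $U$ of $v_0$, the computation of $\min C_m$ to the three functions $x_0,y_0,z_0$, and then read off the systole count from which of these coincide and where the minimum sits. You are in fact more careful than the paper on the one delicate point: the paper simply asserts that continuity of the elements of $\mathcal{C}$ yields such a $U$, whereas passing from the pointwise inequality $\phi(F)(v_0)>s_0$, which holds for infinitely many $F$, to an inequality valid uniformly on a neighborhood genuinely requires the local finiteness of short curves that you invoke. That part of your write-up is an improvement in rigor over the paper's one-line justification.

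There is, however, one concrete error in your local description of $\Gamma$. The edge $\mathfrak{s}$ is defined by $z_0(m),w_0(m)\ge x_0(m)=y_0(m)$; near $v_0$ the constraint $w_0\ge x_0$ is indeed slack (since $w_0(v_0)>s_0$), but the constraint $z_0\ge x_0$ is \emph{active} at $v_0$, because $z_0(v_0)=x_0(v_0)=s_0$. Hence $\mathfrak{s}\cap U$ is only the half of the locus $\{x_0=y_0\}$ on which $z_0\ge x_0$, and similarly for the two $ST$-translates: the three edges at $v_0$ are three half-lines, not three full lines. Consequently your trichotomy is wrong as stated. On the opposite half-line $\{x_0=y_0\}\cap\{z_0<x_0\}$, which lies off $\Gamma$, the three values are \emph{not} pairwise distinct, yet the minimum $z_0$ is attained uniquely, so such $m$ has one systole; had the full line been an edge, you would be forced to assign two systoles to these points, contradicting the theorem. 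The corollary still follows once the edges are identified correctly (equivalently, $m\in\Gamma\cap U$ exactly when the minimum of $x_0,y_0,z_0$ at $m$ is attained at least twice), so this is a repairable slip rather than a fatal gap, but both the sentence claiming the edges cut $U$ in the full coincidence loci and the clause asserting that the three values are pairwise distinct off the edges need to be corrected.
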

\begin{proof}
 Since elements of $\mathcal{C}$ are continuous functions, there exists $U$, a neighborhood of $v_0$  such that $\min C_m = \min \{x_0(m), y_0(m), z_0(m)\}$ for any $m \in U$. It is easy to verify that 
\[
x_0(m)=y_0(m)<z_0(m)\ \text{or}\  y_0(m)=z_0(m)<x_0(m)\  \text{or}\  z_0(m)=x_0(m)<y_0(m)
  \]
   for any $m \in U \cap \Gamma \setminus \{ v_0\}$ and 
   \[
   x_0(m)<y_0(m),z_0(m)\  \text{or}\  y_0(m)<z_0(m),x_0(m)\  \text{or}\  z_0(m)<x_0(m),y_0(m)
   \]
    for any $m\in U \setminus \Gamma$ by simple calculation. Therefore the claim of Theorem \ref{thethm} holds in $U$.
\end{proof}

\begin{lem}
\label{adj}
For $m \in \mathcal{T}(\Sf)$, suppose that $Z_{m}$ is an element of $\Omega$ such that $\phi(Z_m)(m)=\min C_m$. Then for any $F\in \Omega$,
\[
\phi(Z_{m})(m)=\phi(F)(m) \Rightarrow Z_m \cap F \not= \emptyset.
\]
\end{lem}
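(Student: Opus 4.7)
The plan is to argue by contradiction using the tree structure of $\Omega$ together with Corollary \ref{ineq1}. Assume $\phi(Z_m)(m) = \phi(F)(m) = \min C_m$ but $Z_m \cap F = \emptyset$. First I would locate a separating configuration: since the underlying $3$-regular tree is a tree, non-adjacency of the complementary regions $Z_m$ and $F$ lets one pick the first edge along the shortest path in the tree joining a vertex incident to $Z_m$ to a vertex incident to $F$. The two regions $X, Y$ flanking this edge, and the region $W$ at its far endpoint, yield distinct $X, Y, W \in \Omega \setminus \{Z_m, F\}$ with $(X, Y; Z_m, W)$ and $F \in \Omega'_{X, Y; W}$.

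Next I would establish $w(m) > x(m)$ and $w(m) > y(m)$. By Lemma \ref{markoff1} applied to $\{X, Y, Z_m\}$ and $\{X, Y, W\}$, the numbers $z_m(m)$ and $w(m)$ are the two roots of a single quadratic in $t$ (namely $t^2 - x(m)y(m)\,t + x(m)^2 + y(m)^2 = 0$ for $\Sf_{1,1}$, and the analogue with constant $x(m)^2 + y(m)^2 + 8x(m) + 8y(m) + 28$ and linear coefficient $8 - x(m)y(m)$ for $\Sf_{0,4}$). Combining the Vieta relations with $z_m(m) \leq \min(x(m), y(m))$ (from the minimality of $\phi(Z_m)(m)$) and the lower bound $x(m), y(m), z_m(m) > 2$ (together with Lemma \ref{markoff2} in the $\Sf_{0,4}$ case), one verifies directly that $w(m) > x(m)$ and $w(m) > y(m)$.

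Finally, Corollary \ref{ineq1} applied to the configuration $(X, Y; Z_m, W)$ gives $\phi(G)(m) > x(m), y(m)$ for every $G \in \Omega'_{X, Y; W}$. Taking $G = F$ yields $\phi(F)(m) > x(m) \geq z_m(m) = \phi(F)(m)$, a contradiction, so $Z_m \cap F \neq \emptyset$.

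The main obstacle is the first step: verifying that $Z_m \cap F = \emptyset$ genuinely produces a separator of the form $(X, Y; Z_m, W)$ with $F \in \Omega'_{X, Y; W}$. This is a combinatorial statement about complementary regions of a planar $3$-regular tree, and I would justify it using the description of each region's boundary as a bi-infinite path in the tree and the uniqueness of geodesics in a tree. The subsequent algebraic inequality $w(m) > x(m), y(m)$ is essentially routine for $\Sf_{1,1}$, but for $\Sf_{0,4}$ it genuinely needs the sharper bound of Lemma \ref{markoff2} and not merely the defining Markoff relation.
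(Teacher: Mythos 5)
Your argument is correct, and it takes a genuinely different route from the paper's. The paper fixes a single vertex where $Z_m$ meets two neighbours $X,Y$ and runs a four-way case analysis on the order relations among $x(m),y(m),z_m(m)$, handling the equality cases via Lemma \ref{ineq2} and propagating outwards along $\partial Z_m$ with an informal recursion (``do a similar argument to (2)''). You instead localize at the edge separating $Z_m$ from $F$: non-adjacency produces a configuration $(X,Y;Z_m,W)$ with $F\in\Omega'_{X,Y;W}$; minimality gives $z_m(m)\leq x(m),y(m)$, whence $w(m)>x(m),y(m)$ by essentially the computation the paper performs in its case (4) (with $\geq$ in place of $>$, which changes nothing, and with Lemma \ref{markoff2} doing the work for $\Sf_{0,4}$, as you note); a single application of Corollary \ref{ineq1} then yields $\phi(F)(m)>x(m)\geq z_m(m)=\phi(F)(m)$, the desired contradiction. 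This buys a shorter proof with no case split and no appeal to Lemma \ref{ineq2}, at the price of the combinatorial separation step, which you correctly flag as the main point to justify; your sketch of it is sound: disjoint closed complementary regions have disjoint boundary paths, the bridge between two disjoint subtrees of a tree exists, its first edge is the unique edge at its initial vertex not lying on $\partial Z_m$ (so its two flanking regions are exactly the two regions other than $Z_m$ at that vertex, and the third region at the far endpoint is $W$), and $\partial F$, being connected and avoiding that edge, lies entirely on the $W$-side. One small inaccuracy: when the bridge has length one the construction gives $W=F$, so your parenthetical claim that $W\in\Omega\setminus\{Z_m,F\}$ need not hold; this is harmless, since $W\in\Omega'_{X,Y;W}$ and the contradiction $\phi(F)(m)=w(m)>x(m)\geq z_m(m)$ survives, but the distinctness you actually need (and do have) is only that of $X,Y,Z_m,W$. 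The Vieta reformulation is likewise an unnecessary detour; the direct estimate $w(m)-x(m)=x(m)y(m)-z_m(m)-x(m)\geq x(m)(y(m)-2)>0$ and its $\Sf_{0,4}$ analogue suffice.
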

\begin{proof}
We fix $m \in \mathcal{T}(\Sf)$ and write $z_m\coloneqq \phi(Z_m)$. We should show that $F\in \Omega$ such that $F\cap Z_m =\emptyset$ and $\phi(F)(m)=z_m(m)$ is only $Z_m$.

 Take $X,Y,W_1,W_2,W_3 \in \Omega$ so that $(X,Y;Z_m,W_1)$, $(X,Z_m;Y,W_2)$ and $(Y,Z_m;X,W_3)$ (see Figure \ref{tri3}). Consider four cases:
\begin{enumerate}
\item 
$x(m)=y(m)=z_m(m)$.
\item
$x(m)>y(m)=z_m(m)$.
\item
$y(m)>x(m)=z_m(m)$.
\item
$x(m),y(m)>z_m(m)$.
\end{enumerate}

In (1), by Lemma \ref{ineq2}, $F\in \Omega$ such that $z_m(m)=\phi(F)(m)$ is just $X$ or $Y$ or $Z_m$.

In (2), by the hypothesis, $\phi(F)(m)>y(m)=z_m(m)$ for any $F \in \Omega'_{Y,Z_m;X}$.
If $w_3(m)=z_m(m)$, then $F\in \Omega$ such that $z_m(m)=\phi(F)(m)$ is only $Y$ or $W_3$ or $Z_m$ as in (1). If $w_3(m)>z_m(m)$, then $\phi(F)(m)>y(m)=z_m(m)$ for any $F \in \Omega'_{Y,Z_m;W_3}$ and $F\in \Omega$ such that $z_m(m)=\phi(F)(m)$ is only $Y$ or $Z_m$.

In (3), do a similar argument to (2).

In (4), in the case of $\Sf_{1,1}$,
\begin{align*}
w_1(m)-x(m)&=x(m)y(m)-z_m(m)-x(m)\\
&>x(m)(y(m)-2)\\
&>0.
\end{align*}
Similarly $w_1(m)-y(m)>0$. 

In the case of $\Sf_{0,4}$, 
\begin{align*}
w_1(m)-x(m)&=x(m)y(m)-z_m(m)-8-x(m)\\
&>x(m)y(m)-2x(m)-8\\
&\geq 2\sqrt{(x(m)+4)^2+(y(m)+4)^2-4}-2x(m)\\
&>0.
\end{align*}
 Similarly $w_1(m)-y(m)>0$. Therefore $\phi(F)(m)>z_m(m)$ for any $F \in \Omega'_{X,Y;W_1}$.

If there exists $F\in \Omega \setminus \{Z_m\}$ such that $F\cap Z_m \not= \emptyset$ and $\phi(F)(m)=z_m(m)$, do a similar argument to (2). Otherwise, for all $X',Y',W' \in \Omega$ such that $(X',Y';Z_m,W')$, $x'(m),y'(m)>z_m(m)$ holds. Thus $w'(m)>x'(m),y'(m)>z_m(m)$ (do the same argument as when we showed $w_1(m)>x(m),y(m)$). Therefore $F\in \Omega$ such that $z_m(m)=\phi(F)(m)$ is only $Z_m$.
\end{proof}

\begin{figure}[h]
  \centering
   \includegraphics[width=4.5cm]{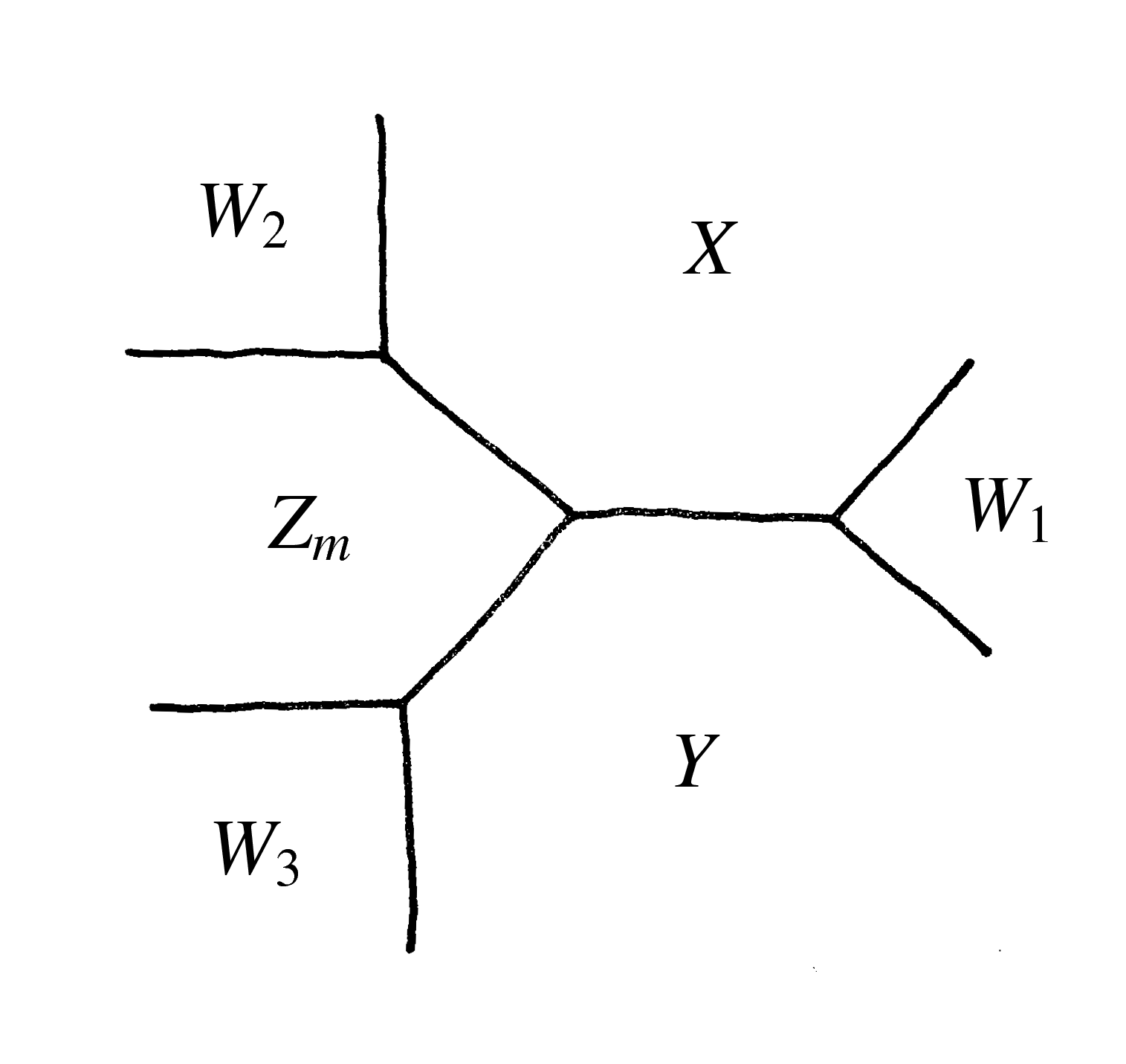}
  \caption{$(X,Y;Z_m,W_1),(X,Z_m;Y,W_2),(Y,Z_m;X,W_3)$.}
  \label{tri3}
  \end{figure}

\begin{lem}
\label{ins}
For any $F\in \Omega \setminus \{ X_0,Y_0\}$ and for any $m\in \mathfrak{s} \setminus \{v_0,v_1\} $,
\[
\phi (F)(m)> x_0(m)=y_0(m).
\]
\end{lem}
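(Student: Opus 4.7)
The plan is to split $\Omega \setminus \{X_0, Y_0\}$ into the two sides of the edge between $X_0$ and $Y_0$ in the $3$-regular tree and apply Corollary \ref{ineq1} on each side. Since $(X_0, Y_0 ; Z_0, W_0)$, the two complementary regions flanking $X_0 \cap Y_0$ are exactly $Z_0$ and $W_0$, so
\[
\Omega \setminus \{X_0, Y_0\} = \Omega'_{X_0, Y_0; Z_0} \sqcup \Omega'_{X_0, Y_0; W_0}.
\]
The input Corollary \ref{ineq1} demands is a \emph{strict} inequality of the form ``third trace exceeds the first two'', so the real work reduces to establishing, for every $m \in \mathfrak{s} \setminus \{v_0, v_1\}$, that
\[
z_0(m) > x_0(m) = y_0(m) \quad\text{and}\quad w_0(m) > x_0(m) = y_0(m).
\]

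The non-strict versions are built into the definition of $\mathfrak{s}$, so only equality must be ruled out in the interior. For the first inequality, an equality $z_0(m) = x_0(m) = y_0(m)$ forces $m = v_0$: this is the very defining property of $v_0$, combined with the fact that $(x_0, y_0, z_0)$ distinguishes points of $\mathcal{T}(\Sf)$ via the bijection of Lemma \ref{charactervariety}. Symmetrically, $w_0(m) = x_0(m) = y_0(m)$ forces $m = v_1$. I would cross-check this on the explicit parametrization of $\mathfrak{s}$ as the diagonal $\alpha = \beta$ (in an appropriate range): the three traces are then simple rational functions of $\alpha$, and each of the equations $z_0 = x_0$ and $w_0 = x_0$ reduces to a polynomial with a single admissible root, pinning down $v_0$ and $v_1$ respectively.

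Once both strict inequalities are in place, Corollary \ref{ineq1} applied to the triple $(X_0, Y_0, Z_0)$ gives $\phi(F)(m) > x_0(m) = y_0(m)$ for every $F \in \Omega'_{X_0, Y_0; Z_0}$, and applied to $(X_0, Y_0, W_0)$ gives the same conclusion for every $F \in \Omega'_{X_0, Y_0; W_0}$. Together these cover $\Omega \setminus \{X_0, Y_0\}$, yielding the lemma. The only real obstacle is the strictness step, which ultimately comes down to the uniqueness of the fixed points $v_0$ and $v_1$ in $\mathfrak{s}$; after that, the proof is a purely combinatorial reduction to Corollary \ref{ineq1}.
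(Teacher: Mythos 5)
Your proof is correct, but it is a genuinely different argument from the one in the paper. The paper does not use Corollary \ref{ineq1} here at all: it first rules out the \emph{equality} $\phi(F)(m)=x_0(m)=y_0(m)$ by taking a hypothetical equality point at which $x_0$ is still minimal (using the neighborhood of $v_0$ from Corollary \ref{cor1}), invoking Lemma \ref{adj} to force $F$ to be adjacent to both $X_0$ and $Y_0$ — hence $F\in\{Z_0,W_0\}$ — and then noting those two only tie with $x_0=y_0$ at $v_0$ and $v_1$; afterwards it upgrades ``no equality'' to the strict inequality by the intermediate value theorem. You instead observe that the definition of $\mathfrak{s}$ already gives $z_0,w_0\geq x_0=y_0$, that equality in either can occur only at $v_0$ or $v_1$ (the same uniqueness fact the paper itself invokes, verifiable from the explicit coordinates or from the Markov relation of Lemma \ref{markoff1} plus the character bijection), and then propagate the resulting strict inequalities $z_0(m),w_0(m)>x_0(m)=y_0(m)$ outward over $\Omega'_{X_0,Y_0;Z_0}\sqcup\Omega'_{X_0,Y_0;W_0}=\Omega\setminus\{X_0,Y_0\}$ via Corollary \ref{ineq1}. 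Your route is shorter and purely monotonic: it avoids Lemma \ref{adj}, the continuity/minimality bookkeeping, and the somewhat terse step in the paper where one must arrange $x_0(m)=\min C_m$ at the hypothetical equality point. What the paper's route buys is that it leans on Lemma \ref{adj}, a structural fact it needs anyway for Lemma \ref{noteq}, and it never has to isolate the strict inequalities for $z_0$ and $w_0$ separately. One small imprecision on your side: injectivity of $(x_0,y_0,z_0)$ alone does not pin down the point with $x_0=y_0=z_0$ until you also fix the common value (it is $3$, resp.\ $7$, by Lemma \ref{markoff1}); your proposed computational cross-check on the explicit parametrization closes this gap, and note that for $\Sf_{0,4}$ the locus $x_0=y_0$ is $\beta=1$ rather than the diagonal.
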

\begin{proof}
First, we prove that there do not exist $F\in \Omega  \setminus \{ X_0,Y_0\}$ and $m\in \mathfrak{s} \setminus \{v_0,v_1\}$ such that $\phi (F)(m)= x_0(m)=y_0(m)$. 

By Corollary \ref{cor1}, there exists $U$, a neighborhood of $v_0$ such that only $x_0$ and $y_0$ are minimal in $U\cap \mathfrak{s} \setminus \{ v_0 \}$. Then only $x_0$ and $y_0$ are minimal in $(U \cup T U)\cap \mathfrak{s} \setminus \{v_0,v_1\}$.

Suppose that there exist $F\in \Omega  \setminus \{ X_0,Y_0\}$ and $m\in \mathfrak{s} \setminus \{v_0,v_1\}$ such that $\phi (F)(m)= x_0(m)=y_0(m)$. Then we can take such $F$ and $m$ additionally so that $x_0(m) = \min C_m$. 
Thus by Lemma \ref{adj}, $F\cap X_0 \not= \emptyset$ and $F\cap Y_0 \not= \emptyset$. Hence $F$ is $Z_0$ or $W_0$. However, $m \in \mathcal{T}(\Sf)$ such that $z_0(m)=x_0(m)=y_0(m)$ or $w_0(m)=x_0(m)=y_0(m)$ is only $v_0$ or $v_1$. This is a contradiction. 

Secondly, suppose that there exist $F\in \Omega  \setminus \{ X_0,Y_0\}$ and $m\in \mathfrak{s} \setminus \{v_0,v_1\}$ such that $\phi (F)(m)< x_0(m)=y_0(m)$. Then by the intermediate value theorem, there exists $m'\in \mathfrak{s} \setminus \{v_0,v_1\}$ such that $\phi (F)(m')= x_0(m')=y_0(m')$. This is a contradiction.
\end{proof}
\begin{cor}
\label{cor2}
There exists a neighborhood of $\Gamma$ in which the claim of Theorem \ref{thethm} holds. 
\end{cor}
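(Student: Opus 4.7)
The plan is to upgrade Lemma \ref{ins} from a pointwise inequality on $\mathfrak{s}\setminus\{v_0,v_1\}$ to an inequality on an open neighborhood of it, then splice with the vertex neighborhoods furnished by Corollary \ref{cor1}, and finally invoke the $\Mod(\Sf)$-equivariance of the whole setup to cover the rest of $\Gamma$.

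Concretely, fix $m$ in the open edge $\mathfrak{s}\setminus\{v_0,v_1\}$. There $x_0(m)=y_0(m)$ while $z_0(m), w_0(m) > x_0(m)$ strictly, the strict inequalities being forced by $m \neq v_0, v_1$. By continuity of $x_0,y_0,z_0,w_0$, I choose an open neighborhood $U_m$ of $m$ on which $z_0$ and $w_0$ still dominate $\max\{x_0,y_0\}$. Corollary \ref{ineq1}, applied once to the configuration $(X_0,Y_0;Z_0,W_0)$ and once to $(X_0,Y_0;W_0,Z_0)$, then gives $\phi(F)(m') > \max\{x_0(m'),y_0(m')\}$ for every $m' \in U_m$ and every $F \in \Omega'_{X_0,Y_0;Z_0} \cup \Omega'_{X_0,Y_0;W_0} = \Omega \setminus \{X_0,Y_0\}$. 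Hence only $X_0$ and $Y_0$ compete for a systole on $U_m$.

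Shrinking $U_m$ if necessary so that it avoids the vertices $v_0, v_1$ and meets no edge of $\Gamma$ other than $\mathfrak{s}$ (possible because $\Gamma$ is a properly embedded $3$-regular tree), I obtain $U_m \cap \Gamma = \{m' \in U_m : x_0(m') = y_0(m')\}$. Points of $U_m \cap \Gamma$ then have precisely two systoles (represented by $X_0$ and $Y_0$), while points of $U_m \setminus \Gamma$ have exactly one, namely whichever of $x_0, y_0$ is strictly smaller, matching the trichotomy of Theorem \ref{thethm}. Taking the union of these $U_m$ over $m \in \mathfrak{s}\setminus \{v_0,v_1\}$ and adjoining the vertex neighborhoods supplied by Corollary \ref{cor1} at $v_0$ and $v_1$ produces an open neighborhood of $\mathfrak{s}$ in which the theorem holds; translating by $\Mod(\Sf)$ and using the equivariance $\Tr_\gamma(gm) = g\Tr_\gamma(m)$, together with the fact that the property in Theorem \ref{thethm} is itself $\Mod(\Sf)$-invariant, propagates this to an open neighborhood of all of $\Gamma = \bigcup_{g \in \Mod(\Sf)} g\mathfrak{s}$.

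The only nontrivial step is the jump from the pointwise Lemma \ref{ins} to a neighborhood statement that simultaneously controls the infinite family $\{\phi(F) : F \in \Omega \setminus \{X_0, Y_0\}\}$. Corollary \ref{ineq1} is precisely the tool that collapses this infinite collection of inequalities into the two local ones involving $z_0$ and $w_0$, after which continuity of these two traces on $\mathcal{T}(\Sf)$ finishes the argument; the remaining steps are bookkeeping about the embedded tree $\Gamma$ and equivariance.
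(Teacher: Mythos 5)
Your proof is correct and follows essentially the same route as the paper: handle the vertices via Corollary \ref{cor1}, handle the open edge $\mathfrak{s}\setminus\{v_0,v_1\}$ by showing only $x_0$ and $y_0$ can be minimal there and nearby, and then push the local statement around $\Gamma$ by $\Mod(\Sf)$-equivariance. If anything, your argument is more explicit than the paper's at the one delicate point: where the paper simply invokes Lemma \ref{ins} and continuity of the elements of $\mathcal{C}$ to produce the neighborhood $U'$, you correctly observe that one must control infinitely many trace functions at once, and you do so by reducing to the two open conditions $z_0>x_0=y_0$ and $w_0>x_0=y_0$ via Corollary \ref{ineq1} (the strictness of these on the open edge being the small computational fact, also used inside the paper's proof of Lemma \ref{ins}, that $v_0$ and $v_1$ are the only points where $z_0$ resp.\ $w_0$ equals $x_0=y_0$).
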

\begin{proof}
Let $U$ be a neighborhood of $v_0$ in which the claim of Theorem \ref{thethm} holds (then $T U$ is a neighborhood of $v_1$ in which the claim of Theorem \ref{thethm} holds). We should show that there exists a neighborhood of $\mathfrak{s} \setminus (U \cup T U)$ in which the claim of Theorem \ref{thethm} holds. 

By Lemma \ref{ins}, only $x_0$ and $y_0$ are minimal in $\mathfrak{s} \setminus \{ v_0,v_1\}$. Since elements of $\mathcal{C}$ are continuous functions, there exists $U'$, a neighborhood of $\mathfrak{s} \setminus (U \cup T U)$ such that either $x_0$ or $y_0$ but not both is minimal in $U' \setminus \mathfrak{s}$.
\end{proof}

Let $U$ be a neighborhood of $\mathfrak{s}$ in which the claim of Theorem \ref{thethm} holds. Let $D \subset P$ denote the complementary region of $\Gamma$ such that $x_0$ is minimal in $U\cap D$ (then $\Phi_{1,1}'(D)$ and $\Phi_{0,4}'(D)$ are upper sides of $\Phi_{1,1}'(\mathfrak{s})$ and $\Phi_{1,1}'(\mathfrak{s})$). Then $x_0$ is minimal in $D \cap \bigcup_{g\in \Mod(\Sf)} g U$.

\begin{lem}
\label{noteq}
For any $m \in \mathring{D}$ (the interior of $D$), there does not exist $F\in \Omega \setminus \{X_0\}$ such that $x_0(m)=\phi(F)(m)$. 
\end{lem}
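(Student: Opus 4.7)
The plan is to prove the stronger statement that $x_0$ is the strict unique minimum of $C_m$ throughout $\mathring{D}$; the lemma follows immediately. Set
\[
V \coloneqq \{m \in \mathring{D} : x_0(m) < \phi(F)(m) \text{ for every } F \in \Omega \setminus \{X_0\}\}
\]
and show $V = \mathring{D}$ by checking that $V$ is non-empty, open, and closed in the connected set $\mathring{D}$. Non-emptiness follows from Corollary \ref{cor2}: the portion of $\mathring{D}$ lying inside the distinguished neighborhood of $\Gamma$ belongs to $V$. Openness uses the discreteness of the length spectrum at each point of $\mathcal{T}(\Sf)$: the values $\phi(F)(m)$ accumulate only at $\infty$, so at $m \in V$ the gap between $x_0(m)$ and the next smallest value is positive and persists under small perturbations of $m$.

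The heart of the matter is closedness. Let $m_n \in V$ tend to $m \in \mathring{D}$; continuity of each $\phi(F)$ gives $x_0(m) \leq \phi(F)(m)$ for every $F \neq X_0$, so $x_0(m) = \min C_m$. Suppose for contradiction that $m \notin V$; then $x_0(m) = \phi(F)(m)$ for some $F \neq X_0$, and Lemma \ref{adj} applied with $Z_m = X_0$ forces $X_0 \cap F \neq \emptyset$, so $F$ is adjacent to $X_0$ across some edge of $\partial D$. The stabilizer of $X_0$ inside $\Mod(\Sf)$ --- generated by the twist $T$ --- fixes the function $x_0$, preserves the region $D$, and acts transitively on the edges of the bi-infinite path $\partial X_0$; conjugating by a suitable power of $T$ reduces to the case $F = Y_0$ with adjacent edge $\mathfrak{s}$.

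It remains to verify that $\{x_0 = y_0\} \cap \mathring{D} = \emptyset$, which is the main obstacle. In the global coordinates of \ref{parS11} and \ref{parS04}, this locus is cut out by the single equation $\alpha = \beta$ (for $\Sf_{1,1}$) or $\beta = 1$ (for $\Sf_{0,4}$), so it is a connected one-dimensional subset of $\mathcal{T}(\Sf)$. Along this curve, past each endpoint $v_0, v_1$ of $\mathfrak{s}$, a direct calculation of the values of $z_0$ and $w_0$ shows that one of them becomes the strict minimum of $C_m$ --- placing the continuation inside $\mathring{Z_0}$ or $\mathring{W_0}$ rather than in $\overline{D}$. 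Hence no $m \in \mathring{D}$ can satisfy $x_0(m) = y_0(m)$, contradicting our assumption and completing the proof that $V$ is closed. The difficulty is genuinely global: a local argument near $\mathfrak{s}$ only gives $x_0 < y_0$ strict there, and the explicit coordinates are needed to preclude $\{x_0 = y_0\}$ from curving back into $\overline{D}$ far from $\mathfrak{s}$.
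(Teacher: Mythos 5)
Your overall architecture (reduce via Lemma \ref{adj} to an $F$ adjacent to $X_0$, use the $T$-action to normalize to $F=Y_0$, then rule out $\{x_0=y_0\}\cap\mathring{D}\neq\emptyset$ using the explicit coordinates) is the same as the paper's, just wrapped in an open--closed connectedness argument for the stronger statement instead of the paper's contradiction-plus-intermediate-value scheme. The coordinate descriptions you give for the locus $\{x_0=y_0\}$ ($\alpha=\beta$ for $\Sf_{1,1}$, $\beta=1$ for $\Sf_{0,4}$) are correct.

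The problem is the step you yourself identify as ``the heart of the matter.'' To show that the continuation of $\{x_0=y_0\}$ past $v_0$ and $v_1$ misses $\overline{D}$, you argue that along those rays $z_0$ (resp.\ $w_0$) ``becomes the strict minimum of $C_m$,'' and that this ``places the continuation inside $\mathring{Z_0}$ or $\mathring{W_0}$ rather than in $\overline{D}$.'' This inference is circular: the implication ``$\phi(F)$ is the unique minimizer of $C_m$ $\Rightarrow$ $m$ lies in the complementary region of $\Gamma$ labelled by $F$'' is precisely the content of Corollary \ref{cor3} (applied to $Z_0$ in place of $X_0$), which is what Lemma \ref{noteq} is needed to prove. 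Moreover, the premise itself is not available: a ``direct calculation of the values of $z_0$ and $w_0$'' can show $z_0<x_0=y_0$ and $z_0<w_0$ on the ray, but $C_m$ is infinite, so establishing $z_0=\min C_m$ along the whole ray is a global claim of exactly the kind at issue. The paper avoids both difficulties by making no reference to minimizers at this point: it observes that $\Phi'(\{x_0=y_0\})$ is the straight line $\{B=0\}$, while $\Phi'(\mathring{D})$ is an explicitly described region bounded by the concatenation $\bigcup_k T^k\Phi'(\mathfrak{s})$, and these two explicit subsets of $\mathbb{R}^2$ are disjoint (the line meets $\overline{\Phi'(D)}$ only along $\Phi'(\mathfrak{s})$). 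To repair your proof, replace the minimality argument by this purely coordinate-level disjointness of the line $\{B=0\}$ from $\Phi'(\mathring{D})$; the rest of your argument then goes through.
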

\begin{proof}
Suppose that there exist $F\in \Omega \setminus \{X_0\}$ and $m\in \mathring{D}$ such that $x_0(m)=\phi(F)(m)$. Since $x_0$ is continuous, we can take such $F$ and $m$ additionally so that $x_0(m) = \min C_m$. 
Therefore $X_0\cap F \not= \emptyset$. We take such $F$ and $m$. Since the action of $T$ corresponds to a twist around an essential curve corresponding to $x_0$, $T^k  x_0=x_0$ and $x_0\not= T^k  y_0 \not=T^l  y_0$ for all $k,l \in \mathbb{Z} \ (k \not= l)$. We let $F_k \coloneqq \phi^{-1}(T^k  y_0)$. Since $y_0(v_0)=y_0(v_1)$ and $T v_0=v_1$ holds, $T^k  y_0(T^{-k}  v_0)=T^{k+1}  y_0(T^{-k}  v_0)$. Therefore, $X_0 \cap F_k \cap F_{k+1} \not= \emptyset$. 

Thus, there exists $k\in \mathbb{Z}$ such that $F=F_k$. Hence $L \coloneqq \{ m \in \mathcal{T}(\Sf) \mid \phi(F)(m)=x_0(m) \}$ contains an edge contained in $\partial D$. On the other hand, by the hypothesis, $L\cap \mathring{D} \not= \emptyset$. Since the action of $T$ fixes $D$, $T^{-k} (L \cap \mathring{D})= (T^{-k} L) \cap \mathring{D}$. However, when we consider $\Phi_{1,1}'(L)$ and $\Phi_{0,4}'(L)$, they are the line $\{(\alpha,\beta) \in \mathbb{R}^2 \mid \beta=0\}$ (remark that $T^{-k} L=\{m \in \mathcal{T}(\Sf) \mid x_0(m)=y_0(m)\}$). They do not have the intersection with $\Phi_{1,1}'(\mathring{D})$ and $\Phi_{0,4}'(\mathring{D})$. This is a contradiction.
\end{proof}
\begin{cor}
\label{cor3}
Surfaces in $\bigcup_{g\in \Mod(\Sf)}g \cdot \mathring{D}$ have only one systole.
\end{cor}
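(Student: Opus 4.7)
The plan is to upgrade Lemma \ref{noteq} from non-equality to the strict inequality $x_0(m)<\phi(F)(m)$ for every $F\in\Omega\setminus\{X_0\}$ and every $m\in\mathring{D}$, and then transport the resulting uniqueness to each translate $g\cdot\mathring{D}$ using the $\Mod(\Sf)$-equivariance $g\Tr_\gamma(m)=\Tr_\gamma(g m)$.

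First I would note that, via the coordinates $\Phi_{1,1}'$ or $\Phi_{0,4}'$, the space $\mathcal{T}(\Sf)$ is identified with $\mathbb{R}^2$ and $\Gamma$ is a properly embedded $3$-regular tree, so each complementary region has connected interior; in particular $\mathring{D}$ is open and connected. Since $\mathfrak{s}\subset\partial D$, the neighborhood $U$ of $\mathfrak{s}$ provided by Corollary \ref{cor2} can be chosen so that $U\cap\mathring{D}\neq\emptyset$.

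The key step is a sign-constancy argument. For a fixed $F\in\Omega\setminus\{X_0\}$ the continuous function $h_F\colon\mathring{D}\to\mathbb{R}$ defined by $h_F(m)=\phi(F)(m)-x_0(m)$ has no zero on the connected set $\mathring{D}$ by Lemma \ref{noteq}, and so has constant sign. Evaluating at any $m_0\in U\cap\mathring{D}$ pins it down: by the very choice of $D$, $x_0$ is minimal in $U\cap D$, so $\phi(F)(m_0)\geq x_0(m_0)$, and together with $h_F(m_0)\neq 0$ this forces $h_F(m_0)>0$. Hence $\phi(F)(m)>x_0(m)$ for every $m\in\mathring{D}$ and every $F\in\Omega\setminus\{X_0\}$, so $x_0(m)$ strictly attains $\min C_m$ and $m$ has exactly one systole.

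Finally, for $m\in g\cdot\mathring{D}$, setting $m'=g^{-1}\cdot m\in\mathring{D}$ and using the equivariance identifies $C_m$ with $C_{m'}$ in a minimizer-preserving way, so $m$ inherits the unique-systole property. The only delicate point is the planar-topology verification that $\mathring{D}$ is connected and meets $U$; granted these, the sign argument built on Lemma \ref{noteq} carries all the real content of the corollary.
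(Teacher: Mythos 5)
Your argument is correct and is essentially the paper's own proof: the paper also deduces the strict inequality $x_0(m)<\phi(F)(m)$ on $\mathring{D}$ from Lemma \ref{noteq} by an intermediate value theorem argument, which is exactly your ``continuous function with no zero on a connected set has constant sign'' step. You merely make explicit two points the paper leaves implicit, namely the choice of a base point in $U\cap\mathring{D}$ where $x_0$ is minimal to fix the sign, and the transport to the translates $g\cdot\mathring{D}$ by $\Mod(\Sf)$-equivariance.
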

\begin{proof}
We should prove that $x_0(m)<\phi(F)(m)$ for any $m\in \mathring{D}$ and for any $F\in \Omega \setminus \{ X_0\}$.
Suppose that there exist $m\in \mathring{D}$ and $F\in \Omega \setminus \{ X_0\}$ such that $x_0(m)>\phi(F)(m)$. By the intermediate value theorem, there exists $m' \in \mathring{D}$ such that $x_0(m')=\phi(F)(m')$. This is a contradiction.
\end{proof}

From Corollary \ref{cor2} and Corollary \ref{cor3}, Theorem \ref{thethm} follows.


\subsubsection{Proof of Theorem \ref{thethm2}}

We should prove that the claim of the theorem holds in $D$. From the argument in the proof of Theorem \ref{thethm},  $x_0$ is minimal in $D$ and only $x_0$ is minimal in $\mathring{D}$.

\begin{lem}
\label{2syslem1}
Suppose that $Y,Z,W \in \Omega$, $(X_0,Y;Z,W)$ and $m \in \mathring{D}$. 
If $y(m)<w(m),z(m)$, then for any $F\in \Omega \setminus \{X_0,Y\}$,
\[
y(m)<\phi(F)(m).
\] 
If $y(m)=z(m)$, then for any $F\in \Omega \setminus \{X_0,Y,Z\}$, the same inequality holds.
\end{lem}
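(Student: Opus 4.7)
My plan is to exploit the tree structure around the vertex $v_1\in X_0\cap Y\cap Z$ and repeatedly invoke Corollary~\ref{ineq1}. Throughout, Corollary~\ref{cor3} gives $x_0(m)<\phi(G)(m)$ for every $G\in\Omega\setminus\{X_0\}$, so in particular $x_0(m)<y(m)$. For the first statement, the partition $\Omega\setminus\{X_0,Y\}=\Omega'_{X_0,Y;Z}\sqcup\Omega'_{X_0,Y;W}$ is enough: under the hypothesis $y(m)<z(m),w(m)$ one has $z(m)>x_0(m),y(m)$ and $w(m)>x_0(m),y(m)$, and two applications of Corollary~\ref{ineq1} to the triangle $(X_0,Y;Z,W)$ yield $\phi(F)(m)>y(m)$ on each side.

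For the second statement the equality $z(m)=y(m)$ blocks direct use of Corollary~\ref{ineq1} on the $Z$-side, so I would treat the two sides of edge $X_0Y$ separately. On the $W$-side I first prove $w(m)>y(m)$: Vieta applied to the Markoff quadratic of Lemma~\ref{markoff1} at the two triangles sharing edge $X_0Y$ gives $w=x_0y-z$ for $\Sf_{1,1}$ and $w=x_0y-z-8$ for $\Sf_{0,4}$, so substituting $z=y$ produces $w-y=y(x_0-2)$ (resp.\ $y(x_0-2)-8$). The $\Sf_{1,1}$ case is immediate from $x_0>2$; for $\Sf_{0,4}$, Lemma~\ref{markoff2} together with the trivial bound $(x_0+4)^2+8y+12>0$ yields $w>y$. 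Then Corollary~\ref{ineq1} applied to $(X_0,Y;W,Z)$ gives $y<\phi(F)$ on $\Omega'_{X_0,Y;W}$.

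On the $Z$-side I refine the tree at $v_1$ by choosing $W_2,W_3\in\Omega$ with $(X_0,Z;W_2,Y)$ and $(Y,Z;W_3,X_0)$. Any region in $\Omega'_{X_0,Y;Z}\setminus\{Z\}$ has boundary path disjoint from $v_1$, so it lies entirely on one side of edge $X_0Z$ or of edge $YZ$, giving the disjoint decomposition
\[
\Omega'_{X_0,Y;Z}\setminus\{Z\}=\Omega'_{X_0,Z;W_2}\sqcup\Omega'_{Y,Z;W_3}.
\]
Vieta produces $w_2=x_0z-y$ and $w_3=yz-x_0$ (with the same $-8$ corrections for $\Sf_{0,4}$), and after substituting $y=z$ the required strict inequalities $w_2>z$ and $w_3>y$ reduce to $x_0>2$ in the $\Sf_{1,1}$ case, to Lemma~\ref{markoff2} for $w_2>z$ in the $\Sf_{0,4}$ case, and to a routine elimination using the $(X_0,Y,Z)$ Markoff relation combined with the bound $x_0<7$ forced by $m\in\mathring{D}$ (since $(7,7,7)$ is the equilateral fixed point of the Markoff surface) for $w_3>y$ in the $\Sf_{0,4}$ case. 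Two more invocations of Corollary~\ref{ineq1} then finish. The main technical obstacle is this last $\Sf_{0,4}$ bound for $w_3$: a bare use of $x_0>2$ is insufficient, and one must exploit that $m\in\mathring{D}$ strictly separates $x_0$ from the equilateral trace value $7$.
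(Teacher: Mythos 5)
Your overall architecture coincides with the paper's: the first claim is exactly the paper's two applications of Corollary \ref{ineq1} across the edge $X_0\cap Y$ (using that $x_0$ is the unique minimum on $\mathring{D}$), and for the second claim you arrive at the same decomposition $\Omega\setminus\{X_0,Y,Z\}=\Omega'_{X_0,Y;W}\sqcup\Omega'_{X_0,Z;W_2}\sqcup\Omega'_{Y,Z;W_3}$ around the vertex $X_0\cap Y\cap Z$, with the same Vieta computations and the same use of Lemma \ref{markoff2} to get $w(m)>y(m)$ and $w_2(m)>z(m)$ in the $\Sf_{0,4}$ case. Up to that point the proposal is correct and is essentially the paper's proof (your $W_2,W_3$ are the paper's $W_1,W_2$).

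The one genuine gap is your treatment of $w_3(m)>y(m)$ for $\Sf_{0,4}$. You invoke the bound $x_0(m)<7$ on $\mathring{D}$, justified only by the parenthetical remark that $(7,7,7)$ is the equilibrium point of the Markoff surface. That $x_0<7$ holds throughout $\mathring{D}$ is true, but it is nowhere established in the paper (it amounts to saying the systole function on $\mathcal{T}(\Sf_{0,4})$ is maximized at $v_0$, which is not proved here), so as written this step is unsupported; you would owe a separate argument for it. It is also unnecessary: your diagnosis that ``a bare use of $x_0>2$ is insufficient'' overlooks that on $\mathring{D}$ you already have the strict inequality $x_0(m)<y(m)$, and this together with Lemma \ref{markoff2} applied to the triple $(x_0,y,z)$ closes the step in one line, exactly as in the paper:
\[
w_3(m)-y(m)=y(m)^2-y(m)-x_0(m)-8>x_0(m)y(m)-2y(m)-8>0,
\]
where the first inequality is $\bigl(y(m)-x_0(m)\bigr)\bigl(y(m)+1\bigr)>0$ and the second is Lemma \ref{markoff2}. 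Replacing your $x_0<7$ detour by this estimate makes the proof complete and identical in substance to the paper's.
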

\begin{proof}
Take $Y,Z,W \in \Omega$ so that $(X_0,Y;Z,W)$.
Suppose that $y(m)<w(m),z(m)$. From Corollary \ref{ineq1}, $y(m)<\phi(F)(m)$ for any $F \in \Omega'_{X_0,Y;Z},\  \Omega'_{X_0,Y;W}$ (remark that $x_0$ is minimal in $D$). Therefore $y(m)<\phi(F)(m)$ for any $F \in\Omega'_{X_0,Y;Z}\cup \Omega'_{X_0,Y;W}=\Omega \setminus \{X_0,Y\}$.

 Take $Y,Z,W_1,W_2 \in \Omega$ so that $(X_0,Z;Y,W_1),(Y,Z;X_0,W_2)$. Suppose that $y(m)=z(m)$. 
 
 In the case of $\Sf_{1,1}$, 
 \begin{align*}
 w(m)&=w_1(m)\\
 &=x_0(m)y(m)-y(m),\\
  w_2(m)&=(y(m))^2 -x_0(m).\\
   \end{align*}
   Therefore
   \begin{align*}
   w(m)-y(m)&=w_1(m)-z(m)\\
   &=y(m)(x_0(m)-2)\\
   &>0, \\
   w_2(m)-y(m)&=(y(m))^2-y(m)-x_0(m)\\
   &>y(m)(y(m)-2)\\
   &>0.
 \end{align*}

In the case of $\Sf_{0,4}$, 
\begin{align*}
w(m)&=w_1(m)\\
&=x_0(m)y(m)-y(m)-8,\\
w_2(m)&=(y(m))^2 -x_0(m)-8.
\end{align*} 
From Lemma \ref{markoff2},
\begin{align*}
w(m)-y(m)&=w_1(m)-z(m)\\
&=x_0(m)y(m)-2y(m)-8\\
&>2\sqrt{(x_0(m)+4)^2+(y(m)+4)^2-4}-2y(m)\\
&>0,\\
w_2(m)-y(m)&=(y(m))^2-y(m)-x_0(m)-8\\
&>x_0(m)y(m)-2y(m)-8\\
&>0.
\end{align*}

Therefore $y(m)<w(m),w_1(m),w_2(m)$. From Corollary \ref{ineq1}, $y(m)=z(m)<\phi(F)(m)$ for any $F\in \Omega'_{X_0,Y;W}\cup \Omega'_{X_0,Z;W_1}\cup \Omega'_{Y,Z,W_2}=\Omega \setminus \{X_0,Y,Z\}$.
\end{proof}

\begin{lem}
\label{2syslem2}
Suppose that $m\in \mathfrak{s}\setminus \{ v_0,v_1 \}$.  For any $F \in \Omega'_{X_0,Y_0;Z_0} \setminus \{Z_0\}$,
\[
z_0(m)<\phi(F)(m),
\]
 and for any $F \in \Omega'_{X_0,Y_0;W_0} \setminus \{W_0\}$,
 \[
  w_0(m)<\phi(F)(m).
 \]
\end{lem}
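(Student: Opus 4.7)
The plan is to deduce both inequalities from a single strengthening of Corollary \ref{ineq1}: \textit{if $X, Y, Z \in \Omega$ satisfy $X \cap Y \cap Z \neq \emptyset$ and $z(m) > x(m), y(m)$, then $\phi(F)(m) > z(m)$ for every $F \in \Omega'_{X,Y;Z} \setminus \{Z\}$.} First I would check that the hypotheses apply in our situation: on $\mathfrak{s} \setminus \{v_0, v_1\}$ the inequalities $x_0(m) = y_0(m) < z_0(m)$ and $x_0(m) = y_0(m) < w_0(m)$ hold strictly, because by definition $v_0$ is the only point of $\mathcal{T}(\Sf)$ at which $z_0 = x_0 = y_0$ and $v_1$ the only one at which $w_0 = x_0 = y_0$, while the defining inequalities of $\mathfrak{s}$ give non-strict bounds everywhere on the arc.

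The strengthened claim would then be proved by induction on the tree-distance from $F$ to the edge separating $X$ and $Y$, measured inside $\Omega'_{X,Y;Z}$. The base case consists of the two regions $W_1, W_2$ with $(X, Z; W_1, Y)$ and $(Y, Z; W_2, X)$, for which the lemma immediately preceding Corollary \ref{ineq1} yields $w_1(m), w_2(m) > z(m)$ directly from $z(m) > x(m), y(m)$. For the inductive step, any farther $F$ lies in $\Omega'_{X,Z;W_1} \setminus \{W_1\}$ or in $\Omega'_{Y,Z;W_2} \setminus \{W_2\}$; in the first case the triangle $(X, Z, W_1)$ satisfies $w_1(m) > z(m) > x(m)$, so the inductive hypothesis applied to that triangle with $W_1$ taking the role of the large region gives $\phi(F)(m) > w_1(m) > z(m)$. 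The second case is symmetric.

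With the strengthened claim established, the two assertions of the lemma follow by applying it to the triangles $(X_0, Y_0, Z_0)$ and $(X_0, Y_0, W_0)$ respectively, using the strict inequalities recorded at the start. I do not anticipate a serious obstacle: all the arithmetic is already carried out by the lemma preceding Corollary \ref{ineq1}, and the rest is a short bookkeeping induction on the 3-regular tree. The only delicate point is the strictness at the endpoints of $\mathfrak{s}$, since a single equality $z_0(m) = x_0(m)$ or $w_0(m) = x_0(m)$ is enough to break the base case of the induction, and this is precisely why the points $v_0$ and $v_1$ must be excluded from the statement.
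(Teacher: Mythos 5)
Your proposal is correct and takes essentially the same route as the paper: both arguments first show that the two neighbours $W_1,W_2$ of $Z_0$ across the edges $X_0\cap Z_0$ and $Y_0\cap Z_0$ satisfy $w_1(m),w_2(m)>z_0(m)$ (the paper by redoing the computation of the lemma preceding Corollary \ref{ineq1} using $x_0(m)=y_0(m)$ on $\mathfrak{s}$, you by citing that lemma directly via the strict inequalities $z_0>x_0=y_0$ and $w_0>x_0=y_0$ on $\mathfrak{s}\setminus\{v_0,v_1\}$, which indeed hold by Lemma \ref{ins}), and then propagate outward through the tree, the paper by applying Corollary \ref{ineq1} to the shifted triangles $(X_0,Z_0,W_1)$ and $(Y_0,Z_0,W_2)$ and you by packaging the same iteration as a general strengthened form of Corollary \ref{ineq1}. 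The reorganization into a reusable strengthened corollary is a matter of presentation, not a different argument.
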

\begin{proof}
Take $W_1,W_2,W_3,W_4 \in \Omega$ so that $(X_0,Z_0;Y_0,W_1)$, $(Y_0,Z_0;X_0,W_2)$, $(X_0,W_0;Y_0,W_3)$ and $(Y_0,W_0;X_0,W_4)$ (see Figure \ref{tri5}). Suppose that $m\in \mathfrak{s}\setminus \{ v_0,v_1 \}$.

In the case of $\Sf_{1,1}$, 
\begin{align*}
w_1(m)-z_0(m)&=w_2(m)-z_0(m)\\
&=x_0(m)z_0(m)-x_0(m)-z_0(m)\\
&>z_0(m)(x_0(m)-2)\\
&>0.
\end{align*}
 Similarly $w_3(m)-w_0(m)=w_4(m)-w_0(m)>0$ (remark that $x_0(m)=y_0(m)=\min C_m$ if $m \in \mathfrak{s}\setminus \{ v_0,v_1 \}$).

In the case of $\Sf_{0,4}$, from Lemma \ref{markoff2},
\begin{align*}
w_1(m)-z_0(m)&=w_2(m)-z_0(m)\\
&=x_0(m)z_0(m)-x_0(m)-z_0(m)-8\\
&>x_0(m)z_0(m)-2z_0(m)-8\\
&>2\sqrt{(x_0(m)+4)^2+(z_0(m)+4)^2-4}-2z_0(m)\\
&>0.
\end{align*}
Similarly $w_3(m)-w_0(m)=w_4(m)-w_0(m)>0$.

Therefore $z_0(m)>w_1(m),w_2(m)$ and $w_0(m)>w_3(m),w_4(m)$. From Corollary \ref{ineq1}, $z_0(m)<\phi(F)(m)$ for any $F \in \Omega'_{X_0,Z_0;W_1} \cup \Omega'_{Y_0,Z_0;W_2}=\Omega'_{X_0,Y_0;Z_0} \setminus \{Z_0\}$ and $w_0(m)<\phi(F)(m)$ for any $F \in \Omega'_{X_0,W_0;W_3} \cup \Omega'_{Y_0,W_0;W_4}=\Omega'_{X_0,Y_0;W_0} \setminus \{W_0\}$.
\end{proof}

\begin{figure}[h]

 \centering
   \includegraphics[width=5cm]{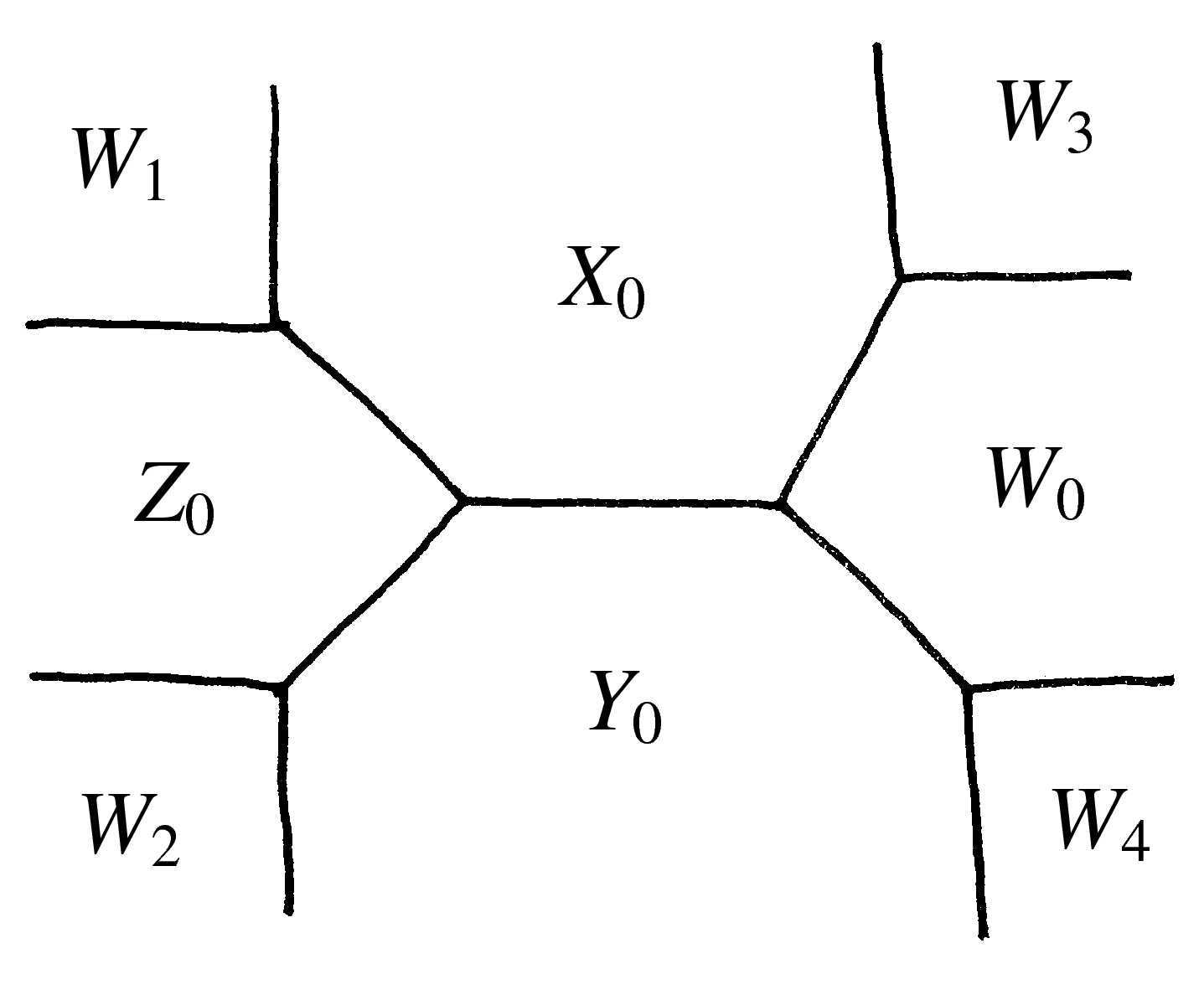}
  \caption{$(X_0,Z_0;Y_0,W_1),(Y_0,Z_0;X_0,W_2),(X_0,W_0;Y_0,W_3),(Y_0,W_0;X_0,W_4)$.}
  \label{tri5}
\end{figure}

\begin{cor}
\label{2syscor1}
If $m \in \Delta_0 \setminus \Delta_1$, then $m$ has precisely two 2-systoles.
\end{cor}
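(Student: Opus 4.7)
The plan is to exploit the $\Mod(\Sf)$-equivariance of $\Delta_0$, $\Delta_1$ and of the 2-systole count, then invoke Lemmas \ref{2syslem1} and \ref{2syslem2}. Since any $g\in \Mod(\Sf)$ permutes $\Omega$ bijectively with $\phi(gF)(m)=\phi(F)(gm)$, the number of elements of $\Omega$ realizing the 2-systole value at $m$ coincides with that at $gm$. Because $\Delta_0=\bigcup_{g}g\cdot\mathfrak{t}$, it suffices to prove the claim for $m\in \mathfrak{t}\setminus \Delta_1$. This set decomposes as the two open arcs of the curve $\{x_0=y_0\}$ extending $\mathfrak{s}$ beyond $v_0$ and $v_1$ (with $v_0,v_1$ themselves excluded as they lie in $\Delta_1$), together with the single point $v'\in \mathfrak{s}\setminus\{v_0,v_1\}$, which is not a vertex of $\Gamma$ and hence not in $\Delta_1$.

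For $m$ on the open arc beyond $v_0$, a short trace computation gives $z_0(m)<x_0(m)=y_0(m)<w_0(m)$, so $z_0$ is the unique systole and $x_0=y_0$ are tied 2-systole candidates. I would then apply the mapping class $ST$, which fixes $v_0$ and cyclically permutes $(x_0,y_0,z_0)\mapsto(z_0,x_0,y_0)$ by Lemma \ref{actOnC}; setting $n:=(ST)\cdot m$ places $n\in \mathring{D}$ with $y_0(n)=z_0(n)=x_0(m)$, so Lemma \ref{2syslem1} in the $y(n)=z(n)$ case yields $y_0(n)<\phi(F)(n)$ for every $F\in \Omega\setminus\{X_0,Y_0,Z_0\}$. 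Since $ST$ sends the set $\{X_0,Y_0,Z_0\}$ to itself, pulling this inequality back through $ST$ gives $x_0(m)<\phi(F')(m)$ for every $F'\in \Omega\setminus\{X_0,Y_0,Z_0\}$, so the 2-systoles at $m$ are precisely $X_0$ and $Y_0$. The arc beyond $v_1$ is handled symmetrically by the rotation fixing $v_1$.

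For $m=v'$, we have $x_0(v')=y_0(v')<z_0(v')=w_0(v')$, so $x_0,y_0$ are the systoles (by Theorem \ref{thethm}) and $z_0,w_0$ are the tied 2-systole candidates. Since $v'\in \mathfrak{s}\setminus\{v_0,v_1\}$, Lemma \ref{2syslem2} applies directly to give $\phi(F)(v')>z_0(v')$ for every $F\in \Omega'_{X_0,Y_0;Z_0}\setminus\{Z_0\}$ and $\phi(F)(v')>w_0(v')$ for every $F\in \Omega'_{X_0,Y_0;W_0}\setminus\{W_0\}$. Because $\Omega=\{X_0,Y_0\}\cup \Omega'_{X_0,Y_0;Z_0}\cup \Omega'_{X_0,Y_0;W_0}$, the 2-systoles at $v'$ are exactly $Z_0$ and $W_0$.

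The main obstacle is the half-arc case: one has to verify through the $\Mod(\Sf)$-action that $(ST)\cdot m$ truly lies in $\mathring{D}$ with $y_0(n)=z_0(n)$, so that Lemma \ref{2syslem1} is applicable, and then carefully track how $ST$ permutes $\{X_0,Y_0,Z_0\}$ when transporting the strict inequality back to $m$. The $v'$ case is comparatively clean once one identifies $v'$ as lying in $\mathfrak{s}\setminus\{v_0,v_1\}$, since Lemma \ref{2syslem2} then disposes of it directly.
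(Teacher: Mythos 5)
Your proof is correct and follows essentially the same route as the paper: reduce by $\Mod(\Sf)$-equivariance to orbit representatives, handle the tied pair $x_0=y_0$ via Lemma \ref{2syslem1} (the paper works directly with the representative $\{y_0=z_0\}\cap\mathring{D}$, which is exactly where your $ST$-conjugation lands), and dispose of $v'$ via Lemma \ref{2syslem2}. The only difference is that you make the transport by $ST$ explicit where the paper leaves the choice of orbit representative implicit.
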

\begin{proof}
We should prove that $m \in \{m'\in \mathcal{T}(\Sf) \mid y_0(m')=z_0(m')\}\cap \mathring{D}$ and $v'$ have only two 2-systoles. From Lemma \ref{2syslem1}, if $m \in \{m'\in \mathcal{T}(\Sf) \mid y_0(m')=z_0(m')\}\cap \mathring{D}$, only two simple closed geodesics corresponding to $Y_0$ and $Z_0$ are 2-systoles of $m$. From Lemma \ref{2syslem2} and the definition of $v'$ (we defined $v'$ by $w_0(v')=z_0(v')$), only two simple closed geodesics corresponding to $W_0$ and $Z_0$ are 2-systoles of $v'$.
\end{proof}

\begin{cor}
\label{2syscor2}
If $m\in {\Delta_0}^c$ (the complement of $\Delta_0$), then $m$ has only one 2-systole.
\end{cor}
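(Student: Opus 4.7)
My plan is to reduce the corollary to $m \in \bar{D} \setminus \Delta_0$ using mapping class group equivariance ($\Delta_0$, $\Gamma$, and the 2-systole count are $\Mod(\Sf)$-invariant, and the action is transitive on complementary regions and on edges of $\Gamma$), and then to split on whether $m$ lies in $\mathring{D}$ or on the interior of an edge of $\Gamma$.

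For $m \in \mathring{D} \setminus \Delta_0$, I would first recall from Corollary \ref{cor3} that $X_0$ is the unique systole; this reduces the question of 2-systoles to minimizing $\phi(F)(m)$ over $F \in \Omega \setminus \{X_0\}$. Labeling the tree-neighbors of $X_0$ along $\partial D$ as $\{Y_n\}_{n \in \mathbb{Z}}$, I would argue by contradiction: suppose $m$ has at least two 2-systoles, and let $Y_n$ be one of them (this can be taken among the tree-neighbors, since iterated application of Corollary \ref{ineq1} rules out regions at tree-distance $\geq 2$ from $X_0$ as minimizers). Applying Lemma \ref{2syslem1} to the quadruple $(X_0, Y_n; Y_{n-1}, Y_{n+1})$: if both $y_{n-1}(m), y_{n+1}(m)$ strictly exceeded $y_n(m)$, the lemma would force $y_n(m) < \phi(F)(m)$ for every $F \notin \{X_0, Y_n\}$, contradicting multiplicity; hence (using also that $y_n(m)$ is a minimum among non-$X_0$ traces) one of $y_{n \pm 1}(m)$ equals $y_n(m)$, an adjacent tie. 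I would then identify the locus $\{y_n = y_{n+1}\}$ as the $T^n$-translate of $\{y_0 = z_0\}$, and verify in the coordinates of Section \ref{parS11} (resp.\ Section \ref{parS04}) that the piece of $\{y_0 = z_0\}$ inside $\mathring{D}$ is the ray from $v_0$ into $D$ which, together with $\{v_0\}$, equals the $ST$-translate of $\mathfrak{t}$; hence it lies in $\Delta_0$. This contradicts $m \notin \Delta_0$, proving uniqueness.

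For $m \in \mathfrak{s} \setminus \{v_0, v_1\}$, both $X_0$ and $Y_0$ are systoles. By Lemma \ref{2syslem2}, $\phi(F)(m) > z_0(m)$ for every $F \in \Omega'_{X_0, Y_0; Z_0} \setminus \{Z_0\}$ and symmetrically $\phi(F)(m) > w_0(m)$ for $F \in \Omega'_{X_0, Y_0; W_0} \setminus \{W_0\}$, so the 2-systole candidates are exactly $\{Z_0, W_0\}$. Thus there are multiple 2-systoles only when $z_0(m) = w_0(m)$; on $\mathfrak{s}$ the unique solution of this equation is $v'$, which lies in $\Delta_0$ by the definition of $\mathfrak{t}$. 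Since $m \notin \Delta_0$, we have $m \neq v'$ and the 2-systole is unique.

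The main obstacle I expect will be the interior case, specifically the coordinate identification of the adjacent-tie locus with $\Delta_0 \cap \mathring{D}$. I plan to do this by computing $\{y_0 = z_0\}$ and $(ST) \cdot \mathfrak{t}$ directly in the $(\alpha, \beta)$-coordinates, then invoking $T$-equivariance to cover all $n \in \mathbb{Z}$ and $\Mod(\Sf)$-transitivity to extend from $\bar{D}$ to all of $\mathcal{T}(\Sf)$.
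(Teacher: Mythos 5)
Your proposal is correct and takes essentially the same route as the paper's proof: reduce by $\Mod(\Sf)$-equivariance to the two cases $m\in\mathring{D}$ and $m\in\mathfrak{s}\setminus\{v_0,v_1\}$, then apply Lemma \ref{2syslem1} in the interior (where $m\notin\Delta_0$ excludes a tie between adjacent neighbors of $X_0$, so the minimizing neighbor strictly beats everything else) and Lemma \ref{2syslem2} on the edge (where $z_0=w_0$ only at $v'\in\Delta_0$). The only difference is presentational: you make explicit the identification of the adjacent-tie loci $\{y_n=y_{n+1}\}\cap\mathring{D}$ with $\Delta_0\cap\mathring{D}$, a step the paper leaves implicit in its appeal to Lemma \ref{2syslem1}.
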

\begin{proof}
Suppose that $m\in \mathring{D} \cap {\Delta_0}^c$. From Lemma \ref{2syslem1}, there exists $Z\in \Omega$ such that $X_0\cap Z \not= \emptyset$ and $z(m)<\phi(F)(m)$ for any $F\in \{F' \in \Omega \setminus \{X_0,Z\}  \mid X_0\cap F' \not= \emptyset \}$. Then the simple closed geodesic corresponds to $Z$ is the unique 2-systole.

Suppose that $m\in \mathfrak{s} \setminus \{v_0,v_1\}$. From Lemma \ref{2syslem2}, simple closed geodesics that correspond to $Z_0$ and $W_0$ are 2-systoles. $m \in \mathfrak{s} \setminus \{v_0,v_1\}$ satisfying $z_0(m)=w_0(m)$ is only $v'$. Therefore if $m\in \mathfrak{s} \setminus \{v_0,v_1,v'\}$, $m$ has only one 2-systole.

Hence if $m \in \bigcup_{g\in\Mod(\Sf)} g (\mathring{D} \cap {\Delta_0}^c \cup \mathfrak{s} \setminus \{v_0,v_1,v'\})={\Delta_0}^c$, $m$ has only one 2-systole.
\end{proof}

\begin{lem}
\label{2syslem3}
Suppose that $W_1,W_2 \in \Omega$, $(X_0,Z_0;Y_0,W_1)$ and $(Y_0,Z_0;X_0,W_2)$. For any $F\in\Omega \setminus \{ X_0,Y_0,Z_0,W_0,W_1,W_2 \}$,
\[
w_0(v_0)=w_1(v_0)=w_2(v_0)<\phi(F)(v_0).
\] 
\end{lem}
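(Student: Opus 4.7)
My plan is to split the argument in two: first the equality $w_0(v_0) = w_1(v_0) = w_2(v_0)$, and then the strict inequality $\phi(F)(v_0) > w_0(v_0)$ for all $F$ outside the six-region hexagon $\{X_0, Y_0, Z_0, W_0, W_1, W_2\}$.

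For the equality, I would start from the defining relation of $v_0$, namely $x_0(v_0) = y_0(v_0) = z_0(v_0) =: a$, and use Lemma \ref{markoff1} to pin down $a$: in $\Sf_{1,1}$, $3a^2 = a^3$ gives $a = 3$, and in $\Sf_{0,4}$, the identity $a^3 = 3a^2 + 24a + 28$ gives $a = 7$. The recursion defining $\phi$ then yields $w_i(v_0) = a^2 - a$ in $\Sf_{1,1}$ and $w_i(v_0) = a^2 - a - 8$ in $\Sf_{0,4}$, independently of $i$, so the three $W_i$ take the common value $6$ (resp.\ $34$). Alternatively, since $ST\cdot v_0 = v_0$ and $ST$ cyclically permutes $W_0, W_1, W_2$ (by Lemma \ref{actOnC}), the equality is immediate from the symmetry, but the direct computation is also needed to locate the common value.

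For the strict inequality the plan is to reduce to Corollary \ref{ineq1} by decomposing $\Omega \setminus \{X_0, Y_0, Z_0, W_0, W_1, W_2\}$ along the tree structure. The three neighbours of $v_0$ in $\Gamma$ are $v_1 = \{X_0, Y_0, W_0\}$, $v_2 = \{X_0, Z_0, W_1\}$ and $v_3 = \{Y_0, Z_0, W_2\}$, and beyond each of them there are two further edges. Label the six regions across these edges by $F_1, \dots, F_6$, so e.g.\ $(X_0, W_0; F_1, Y_0)$ and $(Y_0, W_0; F_2, X_0)$ at $v_1$ and analogous definitions at $v_2, v_3$. One checks that
\[
\Omega'_{X_0, W_0; F_1},\ \Omega'_{Y_0, W_0; F_2},\ \Omega'_{X_0, W_1; F_3},\ \Omega'_{Z_0, W_1; F_4},\ \Omega'_{Y_0, W_2; F_5},\ \Omega'_{Z_0, W_2; F_6}
\]
are pairwise disjoint, and their union is exactly $\Omega \setminus \{X_0, Y_0, Z_0, W_0, W_1, W_2\}$. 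Applying the recursion again, $f_k(v_0) = a(a^2 - a) - a = 15$ in $\Sf_{1,1}$ (and $a(a^2-a-8) - a - 8 = 223$ in $\Sf_{0,4}$), which strictly exceeds both $a$ and the appropriate $w_j(v_0)$. Corollary \ref{ineq1}, applied with $F_k$ playing the role of the ``$Z$'', then gives $\phi(F)(v_0) > w_j(v_0) = w_0(v_0)$ for every $F$ in the corresponding $\Omega'$ piece.

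The main obstacle is the bookkeeping of the six pieces: verifying that the six edges of $\Gamma$ one step past $v_1, v_2, v_3$ really separate every region of $\Omega$ outside the hexagon from the centre. Once this combinatorial partition is set up, the numerical inequality $f_k(v_0) > x_0(v_0), w_j(v_0)$ is forced by $a > 2$ and the conclusion follows from a single invocation of the already-established Corollary \ref{ineq1}.
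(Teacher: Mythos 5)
Your proposal is correct and takes essentially the same route as the paper: compute the common values at $v_0$ ($x_0=y_0=z_0=3$ and $w_i=6$ for $\Sf_{1,1}$, resp.\ $7$ and $34$ for $\Sf_{0,4}$) and then conclude via Corollary \ref{ineq1}. The only difference is that you spell out the six-fold partition of $\Omega\setminus\{X_0,Y_0,Z_0,W_0,W_1,W_2\}$ into sets $\Omega'_{\cdot,\cdot;F_k}$ and the verification $f_k(v_0)=15$ (resp.\ $223$), which the paper leaves implicit in the phrase ``From Corollary \ref{ineq1}, the lemma follows.''
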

\begin{proof}
In the case of $\Sf_{1,1}$, 
\begin{align*}
x_0(v_0)&=y_0(v_0)=z_0(v_0)=3,\\
w_0(v_0)&=w_1(v_0)=w_2(v_0)=6.
\end{align*}
 In the case of $\Sf_{0,4}$, 
 \begin{align*}
 x_0(v_0)&=y_0(v_0)=z_0(v_0)=7,\\
  w_0(v_0)&=w_1(v_0)=w_2(v_0)=34.
  \end{align*}
   From Corollary \ref{ineq1}, the lemma follows.
\end{proof}

From Corollary \ref{2syscor1}, Corollary \ref{2syscor2}, and Lemma \ref{2syslem3}, Theorem \ref{thethm2} follows.

\subsubsection*{Acknowledgements}
The author would like to thank Professor Sadayoshi Kojima and Mitsuhiko Takasawa for many valuable advices. He also would like to thank Yi Wan. The part of once-punctured torus case is based on his master thesis \cite{wan}.

\bibliographystyle{plain}
\bibliography{NumberOfSystoles}

\end{document}